% Versione spedita alla rivista

\UseRawInputEncoding
\documentclass[reqno,twoside,11pt]{amsart}

\usepackage{amsmath,amsfonts,calrsfs,fullpage,amssymb,color,verbatim,eucal,yfonts,mathrsfs}

\newtheorem{Theorem}{Theorem}[section]

\newtheorem{Proposition}[Theorem]{Proposition}
\newtheorem{Lemma}[Theorem]{Lemma}
\newtheorem{Corollary}[Theorem]{Corollary}

\newtheorem{Hypothesis}[Theorem]{Hypothesis}
\makeatletter
\@addtoreset{equation}{section}

\makeatother

\setcounter{tocdepth}{3}

\def\le{\left}
\def\r{\right}

\def\La{\Lambda}

\def\R{\mathbb R}
\def\N{\mathbb N}

\def\E{\mathbb E}

\def\ds{\displaystyle}

\def\e{\epsilon}
\def\vs{\vspace{.1mm}\\}
\newcommand{\esssup}{\operatorname{ess\,sup}}

\newcommand{\Tr}{\operatorname{Tr}}

\newcommand{\one}{1\mkern -4mu\mathrm{l}}

\title{\bf Smoothing effects and maximal H\"older regularity for non-autonomous   Kolmogorov equations in infinite dimension}\date{}

\author[S. Cerrai]{Sandra Cerrai}
\address{Department of Mathematics\\
University of Maryland\\ 
College Park, MD 20742, USA}
\email{cerrai@umd.edu}

\author[A. Lunardi]{Alessandra Lunardi}
\address{
Dipartimento di Scienze Matematiche, Fisiche e Informatiche\\
Universit\`a di Parma\\
Parco Area delle Scienze, 53/A\\
43124 Parma, Italy}
\email{alessandra.lunardi@unipr.it}

\subjclass[2010]{35R15, 46G05, 60J35}

\keywords{Infinite dimensional analysis, Ornstein-Uhlenbeck, smoothing, Schauder estimates}

\begin{document}

 \begin{abstract}  
We prove   smoothing properties and optimal  Schauder type estimates for   a class of  nonautonomous evolution equations driven by time dependent  Ornstein-Uhlenbeck operators in a separable Hilbert space. 
They arise as Kolmogorov equations of linear nonautonomous stochastic differential equations with Gaussian noise. 
 \end{abstract}

 \maketitle
 
 %%%%%%%%%%%%%%%%%%%%%%%%%%%
\section{Introduction}
 %%%%%%%%%%%%%%%%%%%%%%%%%%%

In this paper we consider a class of backward nonautonomous initial value problems, 
\begin{equation}
\label{Kolmogorov}
\left\{\begin{array}{l}
\partial_s u(s,x) + L(s)u(s,x) = \psi(s,x), \;\;0\leq s\leq t, \;x\in X,
\\
\\
u(t)=\varphi(x), \;x\in X, 
\end{array}\right.
\end{equation}
where  $X$ is a separable Hilbert space, endowed with the scalar product $\langle\cdot,\cdot\rangle_X$ and the corresponding norm $\Vert\cdot\Vert_X$, and the operators $L(t)$ are of Ornstein-Uhlenbeck type, formally given by 
\begin{equation}
L(t)\varphi(x)=\frac12 \Tr \left( B(t)B^*(t)D^2\varphi
    (x)\right)+ \langle A(t)x+f(t),\nabla \varphi(x)\rangle_X ,\quad x\in X    .
    \label{ouoperator}
\end{equation}
We assume that the family $\{A(t):\; 0\leq t\leq T\}$ generates a strongly continuous evolution operator $\{ U(t,s)\,:\,0\leq s\leq t\leq T\}\subset \mathcal L(X)$, and that $\{B(t):\; 0\leq t\leq T\}\subset \mathcal L(X)$ 
is strongly measurable. 

For $\psi\equiv 0$, \eqref{Kolmogorov} is the Kolmogorov equation associated with the (forward) stochastic differential equation
\begin{equation}
\label{stocheq}
\left\{\begin{array}{l}
dX_t (s,x)= (A(t)X_t(s,x) + f(t))dt + B(t) dW_t, \quad s<t<T, 
\\
\\
X_s (s,x)= x, 
\end{array}\right.
\end{equation}
where $W_t$ is an $X$-valued cylindrical Wiener process, and $x\in X$. Namely, it is the equation formally satisfied by \[u(s,x):= \E\, \varphi(X_t(s,x)),\ \ \ \ 0\leq s\leq t,\] for fixed $t\in [0, T]$.  For the proof of this fact in the autonomous case we refer to \cite{DPZrosso}. 
The nonautonomous case is similar; see   \cite{DPL} and the subsequent paper \cite{K} for finite dimensional (respectively, infinite dimensional) $X$.

 As in the autonomous case, the analysis of the smoothing  and the maximal regularity properties in the linear equation \eqref{Kolmogorov}
 is of fundamental importance when dealing with  nonlinear nonautonomous equations. Actually, in this case the nonlinear problem can be formulated  as a fixed point problem by using the variation of constants formula, 
    and the regularity properties of the evolution operator allow to get the well posedness of the nonlinear equation and the regularity of its solution (see e.g. \cite{C}, where autonomous Hamilton Jacobi equations in Hilbert spaces have been studied by following this approach). Moreover the use of Kolmogorov equations  in infinite dimension turns out to be a fundamental tool when studying finer properties of the associated stochastic PDEs, such as the asymptotic properties of multi-scaled infinite dimensional systems described by stochastic partial differential equation, or the regularization effect of noise in ill-posed PDEs. To this purpose, just as  examples, we refer to \cite{CF}, where Kolmogorov equations  of elliptic type have been used to prove some averaging results for systems of slow-fast SPDEs, and the regularity of their solution has played a crucial role, or to  \cite{CDPF}, where pathwise uniqueness for stochastic reaction-diffusion equations having a H\"{o}lder drift component have been studied, again by exploiting the regularity properties of the solutions of the associated Kolmogorov equations.
 The above mentioned papers deal with autonomous systems, but we believe it is important to understand what happens in the nonautonomous infinite dimensional case, too. This setting has still to be studied and with the present paper we are going  to fill the  gap.

If $X$ is finite dimensional, for every continuous and bounded $\varphi :X\to \R$ the unique bounded solution to \eqref{Kolmogorov} for $\psi \equiv 0$  is the function 
\[u(s, x) := P_{s,t}\varphi(x),\] where, for every $t \in\,[0,T]$, the evolution family $(P_{s,t})_{s \in\,[0,t]} $ is  defined by  
\begin{equation}
\label{P(s,t)}
P_{t,t} = I; \quad P_{s,t}\,\varphi(x) =  
\int_{X}\varphi(y)\,\mathcal{N} _{m^x(t,s),Q(t,s)}(dy) = \int_{X}\varphi(y+m^x(t,s))\,\mathcal{N} _{0,Q(t,s)}(dy),\ \ \  s\leq t, 
\end{equation}
with
\begin{equation}
\label{g,Q}
\begin{array}{l}
\ds{ Q(t,s):=\int_s^t 
U(t,r)B(r)B^*(r)U^*(t,r)dr , \ \ \ \ 0\leq s\leq t\leq T,}
\end{array}
\end{equation}
and
\begin{equation}
\label{m(t,s)}
\ds{g(t,s) := \int_s^tU(t,r)f(r)dr,}\ \ \ \ m^x(t,s) := U(t,s)x+g(t,s), \ \ \ \ \ 0\leq s\leq t\leq T. 
\end{equation}
(Here, as usual,  $\mathcal{N} _{m,Q}$ denotes the Gaussian measure with mean $m$ and covariance $Q$). For all details, see e.g. \cite{DPL}.

In infinite dimension   we need that the operators $Q(t,s)$ given by   \eqref{g,Q} have finite trace for $t>s$, for the Gaussian measures $\mathcal{N} _{m^x(t,s),Q(t,s)}$ be well defined. In this case    our standing assumption is   
\begin{equation}
\label{as1}
U(t,s)(X)\subseteq Q^{1/2}(t,s)(X),\ \ \ \ 0\leq s<t \leq T,
\end{equation}
 and we define 
\begin{equation}
\label{Lambda(t,s)}
 \Lambda(t,s):=Q^{-1/2}(t,s)\,U(t,s),\ \ \ \ 0\leq s<t \leq T, 
\end{equation}
where $Q^{-1/2}(t,s)$ is the pseudo-inverse of $Q^{1/2}(t,s)$. Under this assumption, we prove that, for $0\leq s<t \leq T$, the operator $P_{s,t} $ still defined by \eqref{P(s,t)}
 maps $B_b(X)$ (the space of the bounded Borel measurable functions from $X$ to $\R$) into $C^k_b(X)$ (the subspace of $k$-times Fr\`echet differentiable functions having bounded Fr\`echet  derivatives) for every $k\in \N$, and we give a representation formula for  the Fr\'echet derivatives of $P_{s,t}\varphi $ of any order, that involves the operators $\Lambda(t,s)$. 

It comes out that $\Lambda(t,s) \in \mathcal L(X)$, for $s<t$, but $ \|\Lambda (t,s)\|_{\mathcal{L}(X)}$ blows up as $t-s\to 0$. If the blow up is powerlike, i.e. if there are $\theta >0$ and $c>0$ such that
\begin{equation}
\label{as40}
\|\Lambda(t,s)\|_{\mathcal{L}(X)}\leq c\,(t-s)^{-\theta},\ \ \ \  0\leq s<t \leq T,
\end{equation}
we prove   maximal H\"older regularity results for the mild solution to \eqref{Kolmogorov}, namely the function $u$ given  by the representation formula 
\begin{equation}
\label{u}
u(s,x) = P_{s,t}\varphi(x) - \int_s^t (P_{s,\sigma }\psi (\sigma, \cdot))(x)d\sigma, \quad  0\leq s\leq t,\ \  \; x\in X. 
\end{equation}

Notice that if $X$ is finite dimensional, \eqref{as40} holds with $\theta = 1/2$ whenever the operators $L(s)$ in \eqref{ouoperator}  are uniformly elliptic. In infinite dimension, for every $\theta \geq 1/2$ we exhibit examples (with  unbounded operators $A(t)$) such that  \eqref{as40} holds, see Section \ref{Sect:Examples}.

The smoothing properties of $P_{s,t}$ are in Section 2. We prove that $P_{s,t}$ maps $B_b(X)$ into $C^n_b(X)$ for $\leq s<t\leq T$ and for every $n\in \N$, and 
there are constants $C_n>0$ such that 
$$\sup_{x \in\,X}\,\|D^n(P_{s,t}\varphi)(x)\|_{\mathcal{L}^n(X)}\leq C_n\, \|\varphi\|_{\infty}\,\|\La(t,s)\|^n_{\mathcal{L}(X)}, \quad \varphi\in B_b(X). $$

In Section 3 we prove 
maximal H\"older regularity results, in the case that \eqref{as40} holds.   More precisely, we prove that if $\alpha\geq 0$ and $\alpha + 1/\theta$ is not integer, then for every 
  $\varphi \in C^{\alpha + 1/\theta}_b(X)$ and $\psi \in C_b([s,t]\times X)$  such that 
  \[\sup_{\sigma \in [s,t]} [\psi(\sigma, \cdot)]_{C^{\alpha}(X; \R)} <+\infty,\] 
 the mild solution   $u(s, \cdot)$ belongs to $C^{\alpha + 1/\theta}_b(X )$ and 
  $$\sup_{0\leq s\leq t}\|u(s, \cdot)\|_{C^{\alpha + 1/\theta}_b(X )} \leq c\,(\|\varphi\|_{C^{\alpha + 1/\theta}_b(X)} + \sup_{\sigma \in [s,t]} [\psi(\sigma, \cdot)]_{C^{\alpha}(X)} ),$$
  for some constant  $c>0$ independent of $s$, $t$, $\varphi$, $\psi$.  {(In the case $\alpha =0$ we just need $\psi \in C_b([s,t]\times X)$).
If instead $\alpha + 1/\theta$ is integer, $u(s, \cdot)$ is not $\alpha + 1/\theta$ times differentiable in general,  it only belongs to the Zygmund space 
$Z^{\alpha + 1/\theta}_b(X)$ for every $s$. This phenomenon is not due to the infinite dimension of $X$ nor to the time dependence of the data; for instance if $X=\R^d$ and $A(t)\equiv 0$, $B(t) \equiv \sqrt{2}I$ we have $L(t) \equiv \Delta$, $\theta = 1/2$, and  it is well known that the  solutions  of the  heat equation $\partial_s u + \Delta u = \psi$ do not possess second order space derivatives for  general $\psi\in C_b([0,T]\times \R^d)$,  unless $d=1$. 

However, while in finite dimension  the function $u$ defined by \eqref{u} is a classical solution to \eqref{Kolmogorov} (even for a larger class of equations, see e.g.  \cite{LL}), in infinite dimension it is not differentiable with respect to $s$ in general. 

 In the autonomous case $A(t)\equiv A$, $B(t)\equiv B$, $f(t)\equiv 0$, the smoothing properties of the transition semigroup \eqref{stocheq} under assumption \eqref{as1} are well known, see e.g. \cite[Ch. 6]{DPZbrutto}. In finite dimension, Schauder estimates for stationary and evolution Ornstein-Uhlenbeck equations were proved in \cite{DPL0,L}. In infinite dimension, 
Schauder  theorems  for stationary equations were first proved in \cite{CDP} 
under assumption \eqref{as1} in the case that $B=I$ and $A$ is the generator of an analytic semigroup of negative type, where   \eqref{as40} holds  with $\theta=1/2$. See also \cite[Ch. 5]{C}, and \cite{cdp}. More recently, in \cite{LR}  Schauder and Zygmund type theorems were proved both for stationary and evolution equations under assumptions that extend \eqref{as1} and   \eqref{as40}.

The paper ends up with  two examples of classes of (genuinely nonautonomous) families of operators $A(t)$, $B(t)$ that satisfy our assumptions. In the first one $X$ is any infinite dimensional separable Hilbert space, and we consider the case where $A(t)$, $B(t)$ are diagonal operators with respect to the same Hilbert basis of $X$. Although this is a bit ``artificial", it allows to prove that for every $\theta \geq 1/2$ there are examples such that  \eqref{as40} holds with exponent $\theta$ and does not hold with any exponent $\theta'<\theta$. 

In the second example the operators $A(t)$ are the realizations of  second order elliptic differential operators in $X= L^2(\mathcal O)$ with Dirichlet boundary conditions and smooth enough coefficients, where $\mathcal O \subset \R^d$  is a bounded smooth open set, and $B(t)\in \mathcal L( L^2(\mathcal O),  L^q(\mathcal O))$ for suitable $q\geq 2$. 
As in the autonomous case, we can take $B(t)\equiv I$ if $d=1$.

%%%%%%%%%%%%%%%%%%%%%%%%%%%%%%%%%%%%%%%%%%%%%%%
 \subsection{Notations and assumptions}
%%%%%%%%%%%%%%%%%%%%%%%%%%%%%%%%%%%%%%%%%%%%%%%
 
If $X$ and $Y$ are Banach spaces, by $\mathcal L(X)$ and   $\mathcal L(X, Y)$ we denote the spaces of the linear bounded operators from $X$ to $X$ and  from $X$ to $Y$, respectively. Moreover, we denote by $\mathcal L^+_1(X)$ the subset of $\mathcal L(X)$ consisting of all non-negative and symmetric operators having finite trace.

$ B_b(X; Y)$ and $C_b(X; Y)$ denote the space of all bounded Borel measurable (resp. bounded  continuous)   functions $F:X\mapsto Y$, endowed with the sup norm 
\[\|F \|_{\infty}:= \sup_{x\in X} \|F(x)\|_Y.\] If $X=\R$ we set $B_b(X; \R) =  B_b(X)$ and  $C_b(X; \R):= C_b(X)$. 
For every $n\in \N$, $C^n_b(X)$ denotes the subspace of $ C_b(X)$ consisting of the $n$ times  Fr\'echet differentiable functions $f$ from $X$ to $\R$, having bounded 
Frech\'et derivatives up to the order $n$, endowed with the norm
$$\|f\|_{C^n_b(X)} = \|f\|_{\infty} + \sum_{k=1}^n \sup_{x\in X}\|D^k f(x)\|_{ {\mathcal L}^{k}(X)}, $$
where, for every $k$-linear continuous function $T:X^k\mapsto \R$, we denoted
$$\|T\|_{ {\mathcal L}^{k}(X)} := \sup\bigg\{ \frac{|T(h_1, \ldots, h_k)|}{\|h_1\|_X\cdots \|h_k\|_X}: \; h_i\in X\setminus \{0\} \bigg\}. $$

For $\alpha\in (0, 1)$, we set as usual
$$C^{\alpha}(X; Y) := \bigg\{F\in C_b(X; Y):  \, [F]_{C^{\alpha}(X, Y)} := \sup_{x\in X, \,h\in X\setminus\{0\}}\frac{\|F(x+h) - F(x)\|_Y}{\|h\|_X^{\alpha }} <+\infty \bigg\}, $$
and we endow $C^{\alpha}(X; Y)$ with the norm
$$\|F\|_{C^{\alpha}_H(X; Y)} :=  \|F\|_{\infty} + [F]_{C^{\alpha}(X, Y)}.$$

The Zygmund space $Z^1_b(X, Y)$ is defined by 
$$\begin{array}{l}
Z^1_b(X, Y) := 
\\
\\
\ds \bigg\{F\in C_b(X; Y):  \, [F]_{Z^1(X, Y)} := \sup_{x\in X, \,h\in X\setminus \{0\}}\frac{\| F(x+2h)-2F(x+h) + F(x)\|_Y}{\|h\|_X} <+\infty \bigg\} \end{array}.$$
We endow it with the norm
$$\|F\|_{Z^1_b(X, Y)} :=  \sup_{x\in X}\|F(x)\|_Y + [F]_{Z^1(X, Y)} .  $$
If $Y=\R$, we set $C^{n}(X; \R) =: C^{n}(X)$, for $n\in \N$, $C^{\alpha}(X; \R) =: C^{\alpha}(X)$, for $\alpha\in (0,1) $, and $Z^1(X; \R) =: Z^1(X)$. 

Higher order H\"older and Zygmund spaces of real valued functions  are accordingly defined,  as follows. 
 For $\alpha\in (0, 1)$ and $n\in \N$, we set
\begin{equation}
\label{def:higherHolder}
\begin{array}{c}
C^{\alpha +n}_b(X) := \{ f\in C^n_b(X): \: D^nf \in C^{\alpha}(X, \mathcal L^n(X))\}, 
\\
\\
\|f\|_{C^{\alpha +n}_b(X)} := \|f\|_{C^{n}_b(X)} + [D^nf]_{C^{\alpha}(X, \mathcal L^n(X))},
\end{array}
\end{equation}
and  for $n\in \N$, $n\geq 2$, 
\begin{equation}
\label{def:higherZygmund}
\begin{array}{c}
Z^{n}_b(X) := \{ f\in C^{n-1}_b(X): \: D^{n-1}f \in Z^{1}_b(X, \mathcal L^{n-1}(X))\}, 
\\
\\
\|f\|_{Z^{ n}(X)} := \|f\|_{C^{n-1}_b(X):} + [D^{n-1}f]_{Z^{1}(X, \mathcal L^{n-1}(X))}.
\end{array}
\end{equation}

Throughout the paper we shall
denote
\[\Delta = \{(s,t)\in [0,T]^2:\; 0\leq s < t\leq T\}.\]
Moreover, we shall assume the following conditions on the evolution operator $U(t,s)$, the operators $B(r)$ and the function $f$.

\begin{Hypothesis}
	\label{H1}
\begin{itemize}
\item[(i)]	$\{U(t,s):\; (s,t) \in\,\bar{\Delta}\}\subset \mathcal L(X)$ is a strongly continuous evolution operator, namely 
for every $x\in X$ the function 
\[(s,t) \in\,\bar{\Delta}\mapsto U(t,s)x \in\,X,\] is continuous, and 
$$U(t,t) = I, \quad U(t,s)U(s,r) = U(t,r), \quad 0\leq r\leq s\leq t\leq T.  $$
\item[(ii)]
 The family of linear operators $\{ B(t):\; 0\leq t\leq T\}\subset \mathcal L(X)$ is bounded , and the mapping $t\mapsto B(t)x$ is measurable, for every $x\in X$.
\item[(iii)] The  function $f: [0,T]\mapsto X$ is bounded and measurable. 	
\item[(iv)]
For every $(s,t) \in\,\bar{\Delta}$, the operator $Q(t,s)$ belongs to $\mathcal L^+_1(X)$.
\end{itemize}
\end{Hypothesis}

In particular, from  Hypothesis \ref{H1}(i) it follows that there exists $M>0$ such that 
$$\|U(t,s)\|_{\mathcal L(X)} \leq M, \quad (s,t) \in\,\bar{\Delta}.  $$

The next condition will play a fundamental role when proving the smoothing effect of the evolution family $(P_{s,t})_{0\leq s\leq t}$. 

\begin{Hypothesis}
	\label{H2} For every $0\leq s<t \leq T$, we have
	\[U(t,s)(X)\subseteq Q^{1/2}(t,s)(X).\] 
\end{Hypothesis}

It is convenient to rewrite Hypothesis \ref{H2} in an equivalent way, as in the autonomous case (e.g., \cite[Appendix B]{DPZbrutto}). 

For fixed $0\leq s<t\leq T$ we consider the operator $L: L^2((s,t);X)\mapsto X$  defined by
\begin{equation}
\label{eq:L}
Ly := \int_s^t U(t,\sigma) B(\sigma)y(\sigma) d\sigma, \quad y\in L^2((s,t);X). 
\end{equation}
The adjoint operator $L ^\star :X\mapsto L^2((s,t);X)$ satisfies $\langle Ly, x\rangle _X= \langle y, L^\star x\rangle_{L^2((s,t);X)} $ for every $x\in X$, $y\in L^2((s,t);X)$, which means 
$$\int_s^t \langle U(t,\sigma )B(\sigma) y(\sigma), x\rangle _X d\sigma = \int_s^t \langle y(\sigma), L^\star x\rangle_X d\sigma, \quad x\in X, \; y\in L^2((s,t);X), $$
so that 
\[(L^\star x)(\sigma) = B^\star(\sigma) U^\star (t,\sigma) x,\ \ \ \ x\in X,\ \ a.e.\ \sigma \in (s, t),\]
 and we get 
$$LL^\star x = \int_s^t U(t,\sigma) B(\sigma) B^\star (\sigma) U^\star (t,\sigma)x\, d\sigma = Q(t,s)x, \quad x\in X. $$
Therefore, by the general theory of linear operators in Hilbert spaces (e.g. \cite[Cor. B3]{DPZrosso}) we have
\begin{equation}
\label{equiv_controlli}
\left\{ 
\begin{array}{ll}
\text{(i)} & \text{Range} \; L = \text{Range}\, ( Q(t,s)^{1/2})
\\
\\
\text{(ii)} & \| Q(t,s)^{-1/2}x \| _X = \| L^{-1}x\|_{L^2((s,t);X)}, \;\;x\in X. 
\end{array}\right. 
\end{equation}
(Here, $L^{-1}$ too is meant as the pseudo-inverse of $L$). The range of $L$ is the set of the traces at time $t$ of the mild solutions of the evolution problems
$$\left\{
\begin{array}{l}
u'(r) = A(r)u(r) + B(r)y(r), \quad s<r<t, 
\\
\\
u(s) =0, 
\end{array}\right. $$
where $y$ varies in $L^2((s,t); X)$. So, Hypothesis \ref{H2} may be reformulated requiring that 
 $U(t,s)$ maps $X$ in the trace space, for every $0\leq s<t\leq T$. Such a space may be characterized in a number of examples, see Section \ref{Sect:Examples}. 

\hspace{2mm}

 Finally, the following last condition will be crucial in the proof of  maximal H\"older regularity in the Kolmogorov equation \eqref{Kolmogorov}.

  \begin{Hypothesis}
 \label{Hyp:Lambda}
There exist $C$ and $\theta >0$ such that
\begin{equation}
\label{as40bis}
\|\La(t,s)\|_{\mathcal{L}(X)}\leq C(t-s)^{-\theta },\quad 0\leq s<t\leq T, 
\end{equation}
where $\La(t,s)$ is the operator defined in \eqref{Lambda(t,s)}. 
\end{Hypothesis}

\medskip

Dealing with evolution equations, in what follows we shall introduce spaces of functions depending both on time and space variables. For every $a<b\in \R$ and $\alpha> 0$, we denote by $C^{0, \alpha}_b([a,b]\times X)$ the space of all bounded continuous functions $\psi :[a,b]\times X\mapsto \R$ such that $\psi(s, \cdot)\in C^{\alpha}_b(X)$, for every $s\in [a,b]$, with 
$$\|\psi\|_{C^{0,\alpha}_b ([a,b]\times X)} := \sup_{s\in [a,b]} \|\psi (s, \cdot)\|_{C^{\alpha}_b(X)}  < +\infty, $$
and, when $\alpha \geq 1$, such that  the functions  $(s,x)\mapsto (\partial ^{k}\psi /\partial h_1 \ldots \partial h_k)(s,x)$ are continuous in $[a,b]    \times X$, for every $(h_1, \ldots, h_k)\in X^k$, with $k\leq [\alpha]$. 

Next, for every $k\in \N$ we denote by $Z^{0,k}_b ([a,b]\times X)$ the space of all bounded continuous functions $\psi:[a,b]\times X\mapsto \R$ such that $\psi (s, \cdot)\in Z^{k}_b(X)$, for every $s\in [a,b]$, with 
$$\|\psi \|_{Z^{0, k}_b ([a,b]\times X)} := \sup_{s\in [a,b]} \|\psi (s, \cdot)\|_{Z^{k}_b(X)}   < +\infty, $$
and, when $k\geq 2$, such that $\psi \in C^{0,k-1}_b ([a,b]\times X)$.

%%%%%%%%%%%%%%%%%%%%%%%%%%%
\section{The evolution family $P_{s,t}$}
%%%%%%%%%%%%%%%%%%%%%%%%%%%

Due to Hypothesis \ref{H1}, for every $x \in\,X$ and $(s,t) \in\,\Delta$ the Gaussian measure  $\mathcal N_{m^x(t,s),Q(t,s)}$ in $X$ with mean $m^x(t,s)$ and covariance operator $Q(t,s)$  is well-defined. Hence, the evolution family $P_{s,t}$ defined by 
\[P_{s,t}\varphi(x)=\int_X\varphi(y)\,\mathcal N_{m^x(t,s),Q(t,s)}\,(dy),\]
for every $x \in\,X$ and $(s,t) \in\,\Delta$ and for every $\varphi \in\,B_b(X)$, is well-defined. 

We would like to stress the fact that the evolution family $P_{s,t}$ is not strongly continuous, even if $X$ is finite dimensional. However, we will prove some weaker regularity property of $P_{s,t}$ that will be enough for our purposes.
In what follows, we will denote by $\{e_j\}_{j \in\,\mathbb{N}}$ a complete orthonormal system in $X$. Moreover, for every $n \in\,\mathbb{N}$, we will denote by $\Pi_n$ the projection of $X$ onto the finite dimensional space generated by $\{e_1,\ldots,e_N\}$. 

\begin{Lemma}
\label{lemma1}
For every $N \in\,\mathbb{N}$, we denote 
\[Q_N(t,s)=\Pi_N\, Q(t,s)\, \Pi_N,\ \ \ \ \ (s,t) \in\,\bar{\Delta}.\]
Then, under Hypothesis \ref{H1}  the following maps
\begin{equation}
\label{as1000}	
(s,t) \in\,\bar{\Delta}\mapsto Q_N(t,s) \in\,\mathcal L(X),
\end{equation}
and
\begin{equation}
\label{as1001}
(s,t) \in\,\bar{\Delta}\mapsto \mbox{{\em Tr}}\,Q_N(t,s) \in\,\mathbb R,
\end{equation}
are both continuous.
Moreover, for every $(s,t) \in\,\bar{\Delta}$ we have
\begin{equation}
\label{as1002}
\lim_{N\to\infty} \Vert Q(t,s)-Q_N(t,s)\Vert_{\mathcal L(X)}=0,	
\end{equation}
and
\begin{equation}
\label{as1003}
\lim_{N\to\infty} 	\mbox{{\em Tr}}\,Q_N(t,s)=\mbox{{\em Tr}}\,Q(t,s).
\end{equation}
\end{Lemma}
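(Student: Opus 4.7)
My plan is to handle the convergence claims \eqref{as1002}, \eqref{as1003} first and then reduce the continuity assertions to the continuity of the individual scalar matrix entries $a_{ij}(t,s) := \langle e_i, Q(t,s) e_j\rangle_X$ for $1 \leq i,j \leq N$. Indeed, $Q_N(t,s)$ has rank $\leq N$ with matrix $(a_{ij})_{i,j\leq N}$ in the basis $\{e_1,\ldots,e_N\}$, so its operator norm is controlled by the Frobenius norm of this matrix; moreover $\Tr Q_N(t,s) = \sum_{i=1}^N a_{ii}(t,s)$. Claim \eqref{as1002} follows because Hypothesis \ref{H1}(iv) makes $Q(t,s)$ trace class, hence compact, and with $\Pi_N \to I$ strongly, the standard fact that $\Pi_N A \Pi_N \to A$ in operator norm for compact $A$ (via $A(B_1)$ being precompact and $\Pi_N \to I$ uniformly on compact sets) applies. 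Claim \eqref{as1003} is just the absolute convergence of the series $\Tr Q(t,s) = \sum_{i=1}^\infty \langle e_i, Q(t,s) e_i\rangle_X$.

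For the continuity of $a_{ij}(t,s)$, I would start from
\[
a_{ij}(t,s) = \int_s^t \langle B^*(r) U^*(t,r) e_i, B^*(r) U^*(t,r) e_j\rangle_X\, dr.
\]
The integrand is uniformly bounded by $M^2 \|B\|_\infty^2$, yielding immediately the Lipschitz estimate $|a_{ij}(t,s_1) - a_{ij}(t,s_2)| \leq M^2 \|B\|_\infty^2 |s_1 - s_2|$, so $a_{ij}$ is Lipschitz in $s$ uniformly in $t$. For continuity in $t$ at a fixed $t_0 \in (s,T]$, the key tool is the algebraic identity
\[
Q(t_0, s) = U(t_0, t) Q(t, s) U^*(t_0, t) + Q(t_0, t), \qquad s \leq t \leq t_0,
\]
obtained by splitting $\int_s^{t_0} = \int_s^t + \int_t^{t_0}$ in the definition of $Q(t_0, s)$ and invoking $U(t_0, r) = U(t_0, t) U(t, r)$ for $r \leq t$.

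For $t \to t_0^+$, exchanging the roles of $t$ and $t_0$ in the identity gives $a_{ij}(t,s) = \langle Q(t_0,s) U^*(t,t_0) e_i, U^*(t,t_0) e_j\rangle_X + \langle e_i, Q(t,t_0) e_j\rangle_X$. The tail is $O(t-t_0)$; the vectors $U^*(t,t_0) e_k$ converge weakly (in general not strongly) to $e_k$ by strong continuity of $U$ and duality; since $Q(t_0, s)$ is a \emph{fixed} compact operator, $Q(t_0,s) U^*(t,t_0) e_i \to Q(t_0,s) e_i$ in norm, and paired with weak convergence of $U^*(t,t_0) e_j$ and its uniform boundedness, this yields the required limit. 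For $t \to t_0^-$, the identity in its original form together with the fact that $\Tr Q(t_0,t) = \int_t^{t_0} \|U(t_0,r) B(r)\|_{HS}^2\, dr \to 0$ (by absolute continuity of the integral, since $\Tr Q(t_0,s)<\infty$) gives $\langle Q(t,s) U^*(t_0,t) e_i, U^*(t_0,t) e_j\rangle_X \to \langle Q(t_0,s) e_i, e_j\rangle_X$. The remaining difference with $a_{ij}(t,s)$ is then controlled via the Cauchy--Schwarz inequality for the PSD form $\langle Q(t,s)\cdot,\cdot\rangle$ combined with the uniform bound $\langle Q(t,s) U^*(t_0,t) v, U^*(t_0,t) v\rangle \leq \langle Q(t_0, s) v, v\rangle$ that the identity entails. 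Joint continuity follows by combining the separate continuity in $s$ and $t$ with the uniform Lipschitz estimate in $s$.

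The main obstacle is the backward case $t \to t_0^-$: since $Q(t,s)$ itself varies with $t$, the ``fixed compact operator on weakly convergent sequence'' trick that cleanly handles $t \to t_0^+$ is unavailable, and one must extract convergence from the trace-norm decay of $Q(t_0,t)$ combined with positivity/Cauchy--Schwarz arguments for the family $\{Q(t,s)\}_{t\leq t_0}$.
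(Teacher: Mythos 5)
Your proof takes a genuinely different route from the paper's. The paper never passes to matrix entries or uses a cocycle identity: it writes $Q_N(t_k,s_k)-Q_N(t,s)$ as a sum of three integrals of $G_N(t,r):=\Pi_N U(t,r)B(r)B^{\star}(r)U^{\star}(t,r)\Pi_N$ over $[s_k,s]$, $[s,t]$ and $[t,t_k]$, asserts that for fixed $r$ the map $t\mapsto G_N(t,r)$ is continuous and bounded in $\mathcal L(X)$, and concludes by dominated convergence; \eqref{as1002} is obtained from the estimate $\Vert A\Pi_N-A\Vert_{\mathcal L(X)}^2\le\sum_{j>N}\Vert Ae_j\Vert_X^2$ for $A\in\mathcal L_1^+(X)$ rather than from compactness. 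Your reduction of \eqref{as1000}--\eqref{as1001} to the entries $a_{ij}$, the Lipschitz estimate in $s$, the identity $Q(t_0,s)=U(t_0,t)Q(t,s)U^{\star}(t_0,t)+Q(t_0,t)$, and the forward limit $t\to t_0^+$ (fixed compact operator $Q(t_0,s)$ applied to a bounded weakly convergent family) are all correct, and the forward argument is a genuinely cleaner way to avoid any continuity assumption on the adjoint family.

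The gap is in the backward limit $t\to t_0^-$. After using the identity and the vanishing of $\langle Q(t_0,t)e_j,e_i\rangle$, what remains is to show that $\langle Q(t,s)e_j,e_i\rangle-\langle Q(t,s)v_j,v_i\rangle\to 0$, where $v_k:=U^{\star}(t_0,t)e_k$. Cauchy--Schwarz for the form $\langle Q(t,s)\cdot,\cdot\rangle$ reduces this to showing $\langle Q(t,s)w_t,w_t\rangle\to 0$ with $w_t:=e_j-U^{\star}(t_0,t)e_j$, i.e.\ $\int_s^t\Vert B^{\star}(r)(U^{\star}(t,r)-U^{\star}(t_0,r))e_j\Vert_X^2\,dr\to 0$. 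Neither of your two stated tools reaches this quantity: the domination $\langle Q(t,s)U^{\star}(t_0,t)v,U^{\star}(t_0,t)v\rangle\le\langle Q(t_0,s)v,v\rangle$ applies only to vectors in the range of $U^{\star}(t_0,t)$, and $w_t$ is not of that form; $w_t$ converges to $0$ only weakly, and $Q(t,s)$ is not a fixed compact operator, so no compactness argument applies either. Expanding $\langle Q(t,s)w_t,w_t\rangle$ brings back the very cross term $\langle Q(t,s)e_j,v_j\rangle$ you are trying to control, so the argument is circular as stated. What would close it is precisely norm continuity of $t\mapsto U^{\star}(t,r)e_j$, i.e.\ strong continuity of the adjoint evolution family --- the hypothesis you correctly point out is not guaranteed by Hypothesis \ref{H1}. (It is worth noting that the paper's own proof quietly uses the same property: norm continuity of $t\mapsto G_N(t,r)$ amounts to norm continuity of $t\mapsto U^{\star}(t,r)e_j$. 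If you grant that, your backward case becomes a one-line dominated-convergence estimate; if you do not, the step as you describe it does not go through.)
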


\begin{proof}
For simplicity of notations, we denote
\[G(t,r)=U(t,r)B(r)B^\star(r)U^\star (t,r),\ \ \ \ G_N(t,r)=\Pi_N\, G(t,r)\, \Pi_N,\ \ \ \ (r,t) \in\,\bar{\Delta}.\]
Since $\{U(t,s)\,:\,(s,t) \in\,\bar{\Delta}\}$ is a strongly continuous evolution operator and the mapping $r \in\,[0,T]\mapsto B(r) \in\,\mathcal L(X)$ is bounded, for every fixed $r \in\,[s,T]$ the mapping
\begin{equation}
\label{as1004}	
t \in\,[r,T]\mapsto G_N(t,r) \in\,\mathcal L(X)
\end{equation}
is continuous and bounded.

Now, let $\{(s_k,t_k)\}_{k \in\,\mathbb N}$ be any sequence in $\bar{\Delta}$ converging to $(s,t)$, as $k\to\infty$. For every $x \in\,X$ we have
\[\begin{array}{l}
\ds{Q_N(t_k,s_k)-Q_N(t,s)=\int_{s_k}^{t_k} G_N(t_k,r)x\,dr-\int_{s}^{t} G_N(t,r)x\,dr	}\\[18pt]
\ds{=\int_{s_k}^{s} G_N(t_k,r)x\,dr+\int_{s}^{t} \left[G_N(t_k,r)x-G_N(t,r)x\right]\,dr+\int_{t}^{t_k} G_N(t_k,r)x\,dr=:\sum_{j=1}^3 J^N_{j,k}\,x.}
\end{array}\]
Due to the  boundedness of mapping \eqref{as1004}, we have
\[\Vert J^N_{1,k}\,x\Vert_X+\Vert J^N_{3,k}\,x\Vert_X\leq \left(|s-s_k|+|t-t_k|\right)\sup_{(t,r) \in\,\bar{\Delta}}\Vert G_N(t,r)\Vert_{\mathcal L(X)}\Vert x\Vert_X,
\]
so that
\begin{equation}
\label{as1005}
\lim_{k\to\infty 	}\Vert J^N_{1,k}\Vert_{\mathcal L(X)}+\Vert J^N_{3,k}\Vert_{\mathcal L(X)}=0.
\end{equation}
Moreover
\[\Vert J^N_{2,k}x\Vert_X\leq \int_s^t \Vert G_N(t_k,r)-G_N(t,r)\Vert_{\mathcal L(X)}\,dr \Vert x\Vert_X,\]
so that, due to the continuity and the boundedness of mapping \eqref{as1004}, thank to the dominated convergence theorem we conclude that
\[\lim_{k\to\infty 	}\Vert J^N_{2,k}\Vert_{\mathcal L(X)}=0.\]
This, together with \eqref{as1005}, allows to conclude that the mapping \eqref{as1000} is continuous.

Next, concerning the continuity of mapping 
\eqref{as1001}, if $\{(s_k,t_k)\}_{k \in\,\mathbb N}$ is any sequence in $\bar{\Delta}$ converging to $(s,t)$, as $k\to\infty$, we have
\[\mbox{Tr}\,Q_N(t_k,s_k)-\mbox{Tr}\,Q_N(t,s)=\sum_{j\leq N}
\langle \left[Q_N(t_k,s_k)-Q_N(t,s)\right]e_j,e_j\rangle.\]
Therefore, due to the continuity of mapping \eqref{as1000}, we conclude that
\[\lim_{k\to\infty} \mbox{Tr}\,Q_N(t_k,s_k)=\mbox{Tr}\,Q_N(t,s).\]

Finally, let us prove \eqref{as1002} and \eqref{as1003}.
Concerning \eqref{as1002}, first of all we notice that for an arbitrary $A \in\,\mathcal L^+_1(X)$ we have
\[\lim_{N\to\infty}A \Pi_N=\lim_{N\to\infty}\Pi_N A=A,\ \ \ \ \text{in}\ \mathcal L(X).\]
Actually, for every $x \in\,X$
\[\begin{array}{l}
\ds{\Vert A \Pi_N x-A x\Vert^2_X=\Vert  \sum_{j>N}\langle x,e_j\rangle A e_j\Vert^2_X	\leq \Vert x\Vert_X^2 \sum_{j>N}\Vert A e_j\Vert^2_X.}
\end{array}\]
Since $A \in\,\mathcal L^+_1(X)$, we have
\[\sum_{j=1}^\infty\Vert A e_j\Vert^2_X<\infty\]
and then we can conclude
\begin{equation}
\label{as1008}	\Vert A \Pi_N-A\Vert_{\mathcal L(X)}^2\leq \sum_{j>N}\Vert A e_j\Vert^2_X\to 0,\ \ \ \ \text{as}\ N\to\infty.
\end{equation}
In particular, since we are assuming that $Q(t,s) \in\,\mathcal L^+_1(X)$, for every $(s,t) \in\,\bar{\Delta}$, and
$$Q_N(t,s)-Q(t,s)=\Pi_N\left[Q(t,s)\Pi_N-Q(t,s)\right]+\left[\Pi_N Q(t,s)-Q(t,s)\right],$$
from \eqref{as1008}, we obtain \eqref{as1002}.

As for \eqref{as1003}, since
\begin{equation}
\label{as1009}	
 \mbox{Tr}\,Q(t,s)-\mbox{Tr}\,Q_N(t,s)=\sum_{j>N}\langle Q(t,s)e_j,e_j\rangle,\end{equation}
the fact that $Q(t,s)$ is trace-class allows to conclude its validity.
\end{proof}

\begin{Corollary}
\label{rem1}
Under Hypothesis \ref{H1}, for every fixed $t \in\,[0,T]$ the   mappings
\[
s \in\,[0,t]\mapsto Q(t,s) \in\,\mathcal L(X),\ \ \ \ 
s \in\,[0,t]\mapsto \mbox{Tr}\,Q(t,s) \in\,\mathbb R,
\]
are both continuous.
\end{Corollary}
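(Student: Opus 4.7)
The plan is to derive Corollary \ref{rem1} from Lemma \ref{lemma1} via a uniform-in-$s$ approximation argument. Fix $t\in[0,T]$. By Lemma \ref{lemma1}, the joint continuity of $(s,t)\mapsto Q_N(t,s)$ and $(s,t)\mapsto \Tr\, Q_N(t,s)$ on $\bar\Delta$ restricts in particular to continuity of $s\mapsto Q_N(t,s)$ and $s\mapsto \Tr\, Q_N(t,s)$ on $[0,t]$ for every $N$, and the same lemma also provides the pointwise convergences \eqref{as1002}--\eqref{as1003}. Since a uniform limit of continuous functions is continuous, everything reduces to upgrading these pointwise limits to uniform convergence in $s\in[0,t]$.

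The key tool is the monotonicity
$$0\leq Q(t,s)\leq Q(t,0),\qquad s\in[0,t],$$
in the sense of symmetric nonnegative operators, which follows at once from the identity $Q(t,0)-Q(t,s)=\int_0^s U(t,r)B(r)B^*(r)U^*(t,r)\,dr$, whose integrand is pointwise nonnegative. In particular $\langle Q(t,s)e_j,e_j\rangle\leq \langle Q(t,0)e_j,e_j\rangle$ for every $j$ and every $s\in[0,t]$, and by Hypothesis \ref{H1}(iv) we have $\sum_j\langle Q(t,0)e_j,e_j\rangle=\Tr\, Q(t,0)<\infty$, giving a summable $s$-independent majorant that can be inserted into any tail estimate.

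For the trace, the elementary identity
$$0\leq \Tr\, Q(t,s)-\Tr\, Q_N(t,s)=\sum_{j>N}\langle Q(t,s)e_j,e_j\rangle\leq \sum_{j>N}\langle Q(t,0)e_j,e_j\rangle$$
together with Weierstrass' test produces the desired uniform convergence on $[0,t]$. For the operator norm, decomposing $Q(t,s)-Q_N(t,s)=(I-\Pi_N)Q(t,s)+\Pi_N Q(t,s)(I-\Pi_N)$ and exploiting the self-adjointness of $Q(t,s)$ yields $\|Q(t,s)-Q_N(t,s)\|_{\mathcal L(X)}\leq 2\,\|(I-\Pi_N)Q(t,s)\|_{\mathcal L(X)}$, and by \eqref{as1008} the squared norm is bounded by $\sum_{j>N}\|Q(t,s)e_j\|_X^2$. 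The positivity of $Q(t,s)$ gives $\|Q(t,s)e_j\|_X^2\leq \|Q(t,s)\|_{\mathcal L(X)}\langle Q(t,s)e_j,e_j\rangle\leq \Tr\, Q(t,0)\,\langle Q(t,0)e_j,e_j\rangle$, producing once more a summable $s$-independent tail bound. The only (mild) obstacle is precisely this use of the operator ordering $Q(t,s)\leq Q(t,0)$ to secure a uniform dominating sequence; once it is in place, both continuity claims follow immediately by letting $N\to\infty$.
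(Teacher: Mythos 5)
Your proposal is correct. The substantive part --- uniform convergence of $\Tr\,Q_N(t,s)\to\Tr\,Q(t,s)$ on $[0,t]$ --- is exactly the paper's argument: the paper writes $\Tr\,Q(t,s)-\Tr\,Q_N(t,s)=\sum_{j>N}\int_s^t\|B^\star(r)U^\star(t,r)e_j\|_X^2\,dr\leq\sum_{j>N}\int_0^t\|B^\star(r)U^\star(t,r)e_j\|_X^2\,dr$, which is precisely your operator ordering $Q(t,s)\leq Q(t,0)$ read off on the diagonal entries; both versions then conclude by summability of the $s$-independent tail. Where you diverge is the first assertion: the paper dismisses the continuity of $s\mapsto Q(t,s)$ in $\mathcal L(X)$ as an obvious consequence of the definition, and indeed
\begin{equation*}
\|Q(t,s_1)-Q(t,s_2)\|_{\mathcal L(X)}\leq\Bigl|\int_{s_2}^{s_1}\|U(t,r)B(r)B^\star(r)U^\star(t,r)\|_{\mathcal L(X)}\,dr\Bigr|\leq M^2\sup_r\|B(r)\|^2_{\mathcal L(X)}\,|s_1-s_2|
\end{equation*}
gives Lipschitz continuity in one line, with no need for the finite-rank approximants at all. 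Your route through the decomposition $Q-Q_N=(I-\Pi_N)Q+\Pi_N Q(I-\Pi_N)$, the bound $\|Q(t,s)e_j\|_X^2\leq\|Q(t,s)\|_{\mathcal L(X)}\langle Q(t,s)e_j,e_j\rangle_X\leq\Tr\,Q(t,0)\,\langle Q(t,0)e_j,e_j\rangle_X$, and Weierstrass is correct (each inequality checks out, using $Q^2\leq\|Q\|\,Q$ for nonnegative self-adjoint $Q$ and $\|Q\|\leq\Tr\,Q$), but it is considerably heavier than necessary for that half of the statement; its only advantage is that it treats both assertions by the single uniform-approximation mechanism.
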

\begin{proof}
The first assertion is an obvious consequence of the definition of $Q(t,s)$. To prove the second one, we
  show that the convergence  Tr$\,Q_N(t,s)\to$ Tr $\,Q(t,s)$ in \eqref{as1003} is uniform, for $s\in [0,t]$. Indeed, from  \eqref{as1009} we get 
$$ 0\leq \mbox{Tr}\,Q(t,s)-\mbox{Tr}\,Q_N (t,s) = \sum_{j>N}\int_s^t  \|B^\star(r)U^\star (t,r)e_j\|_X^2 \,dr \leq \sum_{j>N}\int_0^t  \|B^\star(r)U^\star (t,r)e_j\|_X^2 \,dr ,$$
and the right hand side vanishes as $N\to \infty$ since  
\[\mbox{Tr}\,Q(0,t) = \sum_{j\geq 1}\int_0^t  \|B^\star(r)U^\star (t,r)e_j\|_X^2 \,dr\] is finite by assumption. 
\end{proof}

\begin{Lemma}
\label{Le:continuity}
Under Hypothesis \ref{H1}, for every   $\varphi\in C_b(X)$, the mapping 
\begin{equation}
\label{as1011}	
(s, t,x) \in\,\overline{\Delta }\times X \mapsto P_{s,t}\varphi(x) \in\,\mathbb R,
\end{equation}
is measurable. Moreover, for every fixed $t \in\,[0,T]$ the mapping
\begin{equation}
\label{as1010}	
(s,x) \in\,[0,t]\times X\mapsto P_{s,t}\varphi(x) \in\,\mathbb R,
\end{equation}
is continuous.

If in addition the mapping
\[(s,t) \in\, \Delta \mapsto U(t,s) \in\,\mathcal L(X),\]
is continuous, then for every $\varphi \in\,C_b(X)$ the mapping 
\[(s, t,x) \in\,\overline{\Delta }\times X \mapsto P_{s,t}\varphi(x) \in\,\mathbb R,\]
is continuous.
 
\end{Lemma}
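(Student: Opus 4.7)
\emph{Proof plan.} The starting point is the representation
\[P_{s,t}\varphi(x)=\int_X \varphi\bigl(y+m^x(t,s)\bigr)\,\mathcal N_{0,Q(t,s)}(dy),\qquad m^x(t,s)=U(t,s)x+g(t,s),\]
which reduces the three assertions to the dependence of the mean and covariance on the parameters, combined with the standard fact that weak convergence of Gaussian measures on $X$ follows from convergence of the means in $X$ together with trace-norm convergence of the covariances. As a preliminary step, Hypothesis \ref{H1}(i)-(iii) and the uniform bound $\|U(t,s)\|_{\mathcal L(X)}\le M$ yield that $(s,t,x)\mapsto m^x(t,s)$ is jointly continuous from $\bar\Delta\times X$ into $X$; the continuity of $g(t,s)=\int_s^tU(t,r)f(r)\,dr$ is obtained by splitting $g(t_n,s_n)-g(t,s)$ into integrals over $[s_n,s]$, $[s,t]$, $[t,t_n]$ and invoking dominated convergence.

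For the continuity of $(s,x)\mapsto P_{s,t}\varphi(x)$ at fixed $t$, I would take $(s_n,x_n)\to(s,x)$ in $[0,t]\times X$. By Corollary \ref{rem1}, $Q(t,s_n)\to Q(t,s)$ in $\mathcal L(X)$ and $\Tr\,Q(t,s_n)\to \Tr\,Q(t,s)$. For nonnegative trace-class operators this pair of facts upgrades to trace-norm convergence (a Gr\"umm-type result), so the Gaussians $\mathcal N_{m^{x_n}(t,s_n),Q(t,s_n)}$ converge weakly to $\mathcal N_{m^x(t,s),Q(t,s)}$, and since $\varphi\in C_b(X)$ the claim $P_{s_n,t}\varphi(x_n)\to P_{s,t}\varphi(x)$ follows from the Portmanteau theorem.

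For the joint Borel measurability on $\bar\Delta\times X$, I would approximate $\varphi$ by the cylindrical functions $\varphi_N(y):=\varphi(\Pi_N y)\in C_b(X)$, which converge pointwise to $\varphi$ with $\|\varphi_N\|_\infty\le\|\varphi\|_\infty$. Since $\Pi_N$ pushes $\mathcal N_{0,Q(t,s)}$ forward to $\mathcal N_{0,Q_N(t,s)}$ on $\Pi_N X$,
\[P_{s,t}\varphi_N(x)=\int_{\Pi_N X}\varphi\bigl(z+\Pi_N m^x(t,s)\bigr)\,\mathcal N_{0,Q_N(t,s)}(dz)\]
is a finite-dimensional Gaussian integral whose parameters depend continuously on $(s,t,x)$ by Lemma \ref{lemma1}, so $(s,t,x)\mapsto P_{s,t}\varphi_N(x)$ is jointly continuous on $\bar\Delta\times X$. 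Dominated convergence gives $P_{s,t}\varphi_N(x)\to P_{s,t}\varphi(x)$ pointwise, realizing $P_{s,t}\varphi$ as a pointwise limit of continuous functions, hence Borel measurable.

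Finally, under the additional norm continuity of $(s,t)\mapsto U(t,s)$, given $(s_n,t_n,x_n)\to(s,t,x)$ in $\bar\Delta\times X$, the analogous three-piece splitting of $Q(t_n,s_n)-Q(t,s)$ with dominated convergence in operator norm gives $Q(t_n,s_n)\to Q(t,s)$ in $\mathcal L(X)$. To upgrade this to trace-norm convergence I would exploit the cocycle identity $Q(t_n,s_n)=U(t_n,t)Q(t,s_n)U^*(t_n,t)+Q(t_n,t)$ valid when $s\le s_n\le t\le t_n$, with analogous identities in the remaining configurations: the first summand converges in trace norm to $Q(t,s)$ by combining the continuity-in-$s$ step above with the trace-norm stability of conjugation by $U(t_n,t)\to I$ acting on a fixed trace-class operator, while the remainder is controlled through $\Tr\,Q(t_n,t)=\Tr\,Q(t_n,0)-\Tr\bigl(U(t_n,t)Q(t,0)U^*(t_n,t)\bigr)$, obtained by a second application of the identity. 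Once trace-norm joint continuity of $Q(t,s)$ is in hand, the weak convergence of Gaussians again delivers the conclusion. The main obstacle I anticipate is precisely the trace-norm vanishing $\Tr\,Q(t_n,t)\to 0$: since the integrand $U(t,r)B(r)B^*(r)U^*(t,r)$ need not itself be trace class, the control must be extracted from the integral over the shrinking interval $[t,t_n]$ rather than from the integrand pointwise, most likely via a Dini-type argument on the monotone convergence $\Tr\,Q_N(t,0)\uparrow\Tr\,Q(t,0)$ supplied by Lemma \ref{lemma1}.
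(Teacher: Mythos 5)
Your proposal is correct and follows the same overall skeleton as the paper's proof: reduce everything to continuity of $(s,t,x)\mapsto m^x(t,s)$ together with convergence of the covariances and of their traces, conclude weak convergence of the Gaussian measures, and obtain measurability by exhibiting $P_{s,t}\varphi$ as a pointwise limit of jointly continuous approximants. The one genuinely different choice is in the measurability step: the paper keeps $\varphi$ fixed and replaces $Q(t,s)$ by $Q_N(t,s)$, proving joint continuity of $P^N_{s,t}\varphi(x)=\int_X\varphi(y)\,\mathcal N_{m^x(t,s),Q_N(t,s)}(dy)$ by a tightness argument combined with uniform continuity of $\varphi$ on compact sets, whereas you keep the measure and replace $\varphi$ by the cylindrical function $\varphi\circ\Pi_N$, which collapses the integral to a genuinely finite-dimensional Gaussian integral whose continuity in $(s,t,x)$ is immediate from Lemma \ref{lemma1}; your route avoids the tightness argument and the uniform-continuity-on-compacts estimate \eqref{safine}, at the modest price of invoking the pushforward identity and dominated convergence in $N$. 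For the fixed-$t$ continuity your appeal to a Gr\"umm-type upgrade (operator convergence plus trace convergence of nonnegative trace-class operators implies trace-norm convergence) is an equivalent substitute for the criterion the paper cites from \cite{Boga}. For the last assertion, the paper simply asserts that the argument of Corollary \ref{rem1} yields uniform convergence of $\mbox{Tr}\,Q_N(t,s)$ to $\mbox{Tr}\,Q(t,s)$ on all of $\overline{\Delta}$; your cocycle decomposition $Q(t_n,s_n)=U(t_n,t)Q(t,s_n)U^*(t_n,t)+Q(t_n,t)$ is a more explicit route to the same trace continuity, and you correctly isolate the only delicate point, namely $\mbox{Tr}\,Q(t_n,t)\to 0$, which neither argument disposes of for free since the estimate of Corollary \ref{rem1} is uniform in $s$ only for fixed $t$. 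A clean way to close it within your scheme is to note that $\mbox{Tr}\left(U(t_n,t)Q(t,0)U^*(t_n,t)\right)=\|U(t_n,t)Q(t,0)^{1/2}\|_{HS}^2\to\|Q(t,0)^{1/2}\|_{HS}^2=\mbox{Tr}\,Q(t,0)$, because strong convergence $U(t_n,t)\to I$ composed with the fixed Hilbert--Schmidt operator $Q(t,0)^{1/2}$ gives Hilbert--Schmidt norm convergence; combined with continuity of $t\mapsto\mbox{Tr}\,Q(t,0)$ (which is where the Dini-type argument you mention is really needed) this yields $\mbox{Tr}\,Q(t_n,t)\to 0$.
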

\begin{proof} As a first step, we notice  that for every $(s,t) \in\,\bar{\Delta}$ and every sequence $\{(s_k, t_k)\}_{k \in\,\mathbb N}\subset \bar{\Delta} $ such that    $(s_k, t_k)\to (s,t) $,  we have 
\[\mathcal{N}_{0,Q_N(s_k,t_k)}\rightharpoonup\begin{cases}\mathcal{N}_{0,Q_N(s,t)}, & \text{if}\ s<t\\
	\delta_0, & \text{if}\ s=t,
\end{cases}\]
 for every fixed $N \in\,\mathbb N$. This is because, thanks to Lemma \ref{lemma1},   $Q_N(t_k, s_k)\to Q_N(s,t)$ in $\mathcal L(X)$ and  $\mbox{Tr}\,Q_N(t_k,s_k)\to $ $\mbox{Tr}\,Q_N(s,t)$ (see e.g. \cite[Example 3.8.15]{Boga} to understand how the convergence of the covariance operator and of its trace  implies the weak convergence of Gaussian measures).

 Let now $(s_k, t_k)\to (s,t)$, $x_k\to x$, and $\varepsilon >0$. Since our sequence of Gaussian measures weakly converges, it is uniformly  tight. So, there exists a compact set $K\subset X$ such that 
\begin{equation}
\label{safine1}	
\mathcal{N}_{0,Q_N(s,t)}(X\setminus K) \leq \varepsilon, \quad \mathcal{N}_{0,Q_N(s_k,t_k)}(X\setminus K) \leq \varepsilon, \quad  k\in \N. 
\end{equation}

 Due to the strong continuity of the evolution operator $\{U(t,s)\,:\ (s,t) \in\,\bar{\Delta}\}$, we have   
 \[\lim_{k\to \infty} m^{x_k}(t_k,s_k) =  m^x(t,s).\] Then the set $ K + \left[\{ m^{x_k}(t_k,s_k):\; k\in \N\}\cup  \{ m^x(t,s)\}\right]$ is compact, so that there exists $k_0\in \N$ such that 
 \begin{equation}
 \label{safine}	\varphi(y + m^{x_k}(t_k,s_k))-\varphi(y+ m^x(t,s))|\leq \varepsilon,\ \ \ \ y  \in K,\ \ \ \ k\geq k_0.
 \end{equation}

Now, if we define

\[P^N_{s,t}\varphi(x)=\int_X \varphi(y)\mathcal N_{m^x(t,s),Q_N(t,s)}(dy),\]
we have
$$\begin{array}{l}
\ds |P^N_{s_k, t_k}\varphi(x_k) - P^N_{s, t}\varphi(x)| \\
\\
\ds \leq \ds  \int_K |\varphi(y+m^{x_k}(t_k,s_k)) -  \varphi(y+m^{x}(t,s))|\,\mathcal{N} _{0,Q_N(t_k,s_k)}(dy)  +
\\
\\
\ds + \left|  \int_K \varphi(y+m^{x}(t,s))\,\mathcal{N} _{0,Q_N(t_k,s_k)}(dy) - \int_K \varphi(y+m^{x}(t,s))\,\mathcal{N} _{0,Q_N(t,s)}(dy) \right|

\\
\\
\ds   + \left| \int_{X\setminus K} \varphi(y+m^{x_k}(t_k,s_k))\,\mathcal{N} _{0,Q_N(t_k,s_k)}(dy) -\int_{X\setminus K} \varphi(y+m^{x}(t,s))\,\mathcal{N} _{0,Q_N(t,s)}(dy) \right|.
\end{array}$$
Then, due to \eqref{safine1} and \eqref{safine} and the weak convergence of $\mathcal{N} _{0,Q_N(t_k,s_k)}$ to $\mathcal{N} _{0,Q_N(t,s)}$, there exists $k_1 \in\,\mathbb{N}$ such that 
for $k\geq \max\{k_0, k_1\}$, 
\[|P^N_{s_k, t_k}\varphi(x_k) - P^N_{s, t}\varphi(x)|\leq 2\,\epsilon+2\,\epsilon\,\Vert \varphi\Vert_\infty.\]
Due to the arbitrariness of $\epsilon>0$, this implies 
that the mapping 
\[(s, t, x) \in\,\overline{\Delta }\times X \mapsto P^N_{s,t}\varphi(x) \in\,\mathbb R,\]
is continuous, for every $N \in\,\mathbb N$ and $\varphi \in\,C_b(X)$.

Now, the measurability of the mapping \eqref{as1011}  follows once we notice that for every $(s,t,x) \in\,\bar{\Delta}\times X$ and $\varphi \in\,C_b(X)$
\begin{equation}
\label{as1012}
\lim_{N\to \infty} P^N_{s,t}\varphi(x)=P_{s,t}\varphi(x).
	\end{equation}
Actually, due to \eqref{as1002} and \eqref{as1003}, we have   
\[\mathcal N_{m^x(t,s),Q_N(t,s)}\rightharpoonup \mathcal N_{m^x(t,s),Q(t,s)}, \ \ \ \ \text{as}\ N\to\infty,\]
and \eqref{as1012} follows from the definition of $P_{s,t}$ and $P^N_{s,t}$.

The continuity of the mapping \eqref{as1010},
for every fixed $t \in\,[0,T]$, follows from Corollary \ref{rem1}, by adapting to the operators $P_{s,t}$ the arguments that we have used above for the operators $P^N_{s,t}$.

To prove the last assertion,  we observe that, since $U$ is continuous in $\Delta$,  the function 
\[(s, t) \in\,\overline{\Delta }  \mapsto Q(t,s)\in \mathcal L(X)\] is easily seen to be continuos. Also, the function  
\[(s, t) \in\,\overline{\Delta } \mapsto \mbox{Tr}\,Q(t,s)\in \R,\]
 is continuous, since for every $N\in \N$ the function $(s, t) \in\,\overline{\Delta }  \mapsto $ Tr$\,Q_N(t,s)\in \R$ is continuous and the same argument of Corollary \ref{rem1} shows that the 
convergence  Tr$\,Q_N(t,s)\to$  Tr$\,Q(t,s)$ is uniform in $\overline{\Delta }$. Since also 
\[(s, t,x) \in\,\overline{\Delta }\times X \mapsto m^x(t,s) \in\,\mathbb R,\]
is continuous, we have  that
\[\mathcal N_{m^{x_k}(t_k,s_k),Q(t_k,s_k)}\rightharpoonup  \mathcal N_{m^x(t,s),Q(t,s)}, \ \ \ \ \text{as}\ \ (s_k,t_k,x_k)\to (s,t,x),\] and this amounts to our claim by the definition of $P_{s,t}$. 
 
 \end{proof}

 In view of what we are going to do later, it is convenient to extend $P_{s,t} $ to  functions $\Phi$ in $B_b(X; X)$ or $B_b(X;  \mathcal{L}^k(X))$, by setting
\begin{equation}
\label{eq4}
{\mathbf P}_{s,t}\Phi(x) := 
\int_X
\Phi(y)\,\mathcal{N}_{m^x(t,s),Q(t,s)}
(dy), \ \ \ \ 0\leq s<t\leq T.
\end{equation}
Using again the definition and the Dominated Convergence Theorem, we obtain that $P_{s,t}$ maps
$C^1_b(X)$  into itself,  and
\[D( P_{s,t}\varphi)(x) = U^\star(t, s){\mathbf P}_{s,t}D \varphi(x),\ \ \ \  t > s, \; x \in\,X, 
\]
so that 
\[\sup_{x\in X} \|D (P_{s,t}\varphi)(x)\|_{X^\star }\leq M  \,\|D \varphi\|_{\infty}\leq M  \,\|\varphi\|_{C^1_b(X)},\ \ \ t>s,\ \ \varphi \in\,C^1_b(X).
\]

Iterating, we get the following lemma.

\begin{Lemma}
\label{Le:C^k_derivate}
Under Hypothesis \ref{H1},
for every $k\in \N$ and $0\leq s<t\leq T$, $P_{s,t}$ maps
$C^k_b(X)$ 
into itself,  and
\begin{equation}
\label{C^k}
D^k( P_{s,t}\varphi)(x)(h_1,\ldots,h_k) = P_{s,t}(D^k \varphi(\cdot)(U(t,s)h_1,\ldots,U(t,s)h_k))(x),\quad  t > s,
\end{equation}
for every $x, h_1,\ldots, h_k \in\,X$. In particular, it follows that for every $\varphi \in\,C^{k}_b(X)$
\begin{equation}
\label{as20}
\sup_{x \in\,X}\|D^k(P_{s,t}\varphi)(x)\|_{\mathcal{L}^k(X)}\leq \|U(t,s)\|_{\mathcal{L}(X)}^k\,\|\varphi\|_{C^k_b(X)} \leq M^k \,\|\varphi\|_{C^k_b(X)}, \quad 0\leq s\leq t\leq T.
\end{equation}

Moreover, for every fixed $t \in\,[0,T]$ and  $h_1,\ldots, h_k \in\,X$, the mapping
\begin{equation}
\label{as1111}	
(s,x) \in\,[0,t]\times X\mapsto D^k( P_{s,t}\varphi)(x)(h_1,\ldots,h_k)  \in\,\mathbb R,
\end{equation}
is continuous. 
If in addition the mapping
$(s,t) \in\, \Delta \mapsto U(t,s) \in\,\mathcal L(X)$
is continuous, then the function 
\[(s,t, x) \in\,\overline{\Delta} \times X\mapsto D^kP_{s,t}\varphi (x)(h_1, \ldots, h_k) \in\,\R,\] is continuous.

If $\varphi \in\,C^{\alpha}_b(X)$, where $\alpha = k+\sigma $, with $k\in \N \cup\{0\}$, and $\sigma \in (0, 1)$, then $P_{s,t}\varphi \in\,C^{\alpha}_b(X)$ and
\begin{equation}
\label{as20bis}
[D^{k}(P_{s,t}\varphi) ]_{C^\sigma(X, \mathcal{L}^k(X))}\leq \|U(t,s)\|_{\mathcal{L}(X)}^k\,[D^k\varphi]_{C^\sigma(X, \mathcal{L}^k(X))}\leq M^k\,[D^k\varphi]_{C^\sigma (X, \mathcal{L}^k(X))}.
\end{equation}
Similarly, if $\varphi \in\,Z^{k}_b(X)$, with $k\in \N$, then $P_{s,t}\varphi \in\,Z^{k}_b(X)$ and 
\begin{equation}
\label{as20bisZ}
[D^{k-1}(P_{s,t}\varphi) ]_{Z^1(X, \mathcal{L}^{k-1}(X))}\leq \|U(t,s)\|_{\mathcal{L}(X)}^{k-1}\,[D^{k-1}\varphi]_{Z^1(X, \mathcal{L}^{k-1}(X))}\leq M^{k-1}\,[D^{k-1}\varphi]_{Z^1(X, \mathcal{L}^{k-1}(X))}.
\end{equation}
%
% Consequently,  for every $\alpha >0$  there exists $c_\alpha>0$ such that  for $\varphi \in C^{\alpha}_b(X)$ we have 
%
% \begin{equation}
% \label{as42}\|P_{s,t}\varphi\|_{C^{\alpha}_b(X)}\leq c_\alpha\,\|\varphi\|_{C^{\alpha}_b(X)},\quad 0\leq s\leq t\leq T, \end{equation}
%
% and for every $k\in\N $  there exists $d_k>0$ such that  for $\varphi \in Z^{k}_b(X)$ we have 
%
% \begin{equation}
% \label{as43}
% \|P_{s,t}\varphi\|_{ Z^{k}_b(X)}\leq d_k\,\|\varphi\|_{ Z^{k}_b(X)},\quad 0\leq s\leq t\leq T. 
% \end{equation}
%
%
\end{Lemma}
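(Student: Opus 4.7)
The central observation is the translation structure of $P_{s,t}$. Using \eqref{P(s,t)}, one can write
\[
P_{s,t}\varphi(x) = \int_X \varphi\bigl(y + U(t,s)x + g(t,s)\bigr)\,\mathcal{N}_{0,Q(t,s)}(dy),
\]
so the $x$-dependence enters only through the shift $U(t,s)x$ inside $\varphi$. For $\varphi\in C^1_b(X)$ I would differentiate under the integral sign, the differentiation being justified by the boundedness of $D\varphi$ and Dominated Convergence, to obtain
\[
D(P_{s,t}\varphi)(x)\,h \;=\; \int_X D\varphi\bigl(y+m^x(t,s)\bigr)(U(t,s)h)\,\mathcal{N}_{0,Q(t,s)}(dy) \;=\; P_{s,t}\!\bigl(D\varphi(\cdot)(U(t,s)h)\bigr)(x).
\]
The general formula \eqref{C^k} then follows by induction on $k$: assuming the identity for $k$, I apply the $k=1$ case to the scalar test function $\psi_{h_1,\ldots,h_k}(x):=D^k\varphi(x)(U(t,s)h_1,\ldots,U(t,s)h_k)$, which belongs to $C^1_b(X)$ whenever $\varphi\in C^{k+1}_b(X)$. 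Estimate \eqref{as20} is then immediate from the contractivity of $P_{s,t}$ on $B_b(X)$ combined with multilinearity: the integrand in \eqref{C^k} is bounded by $\|\varphi\|_{C^k_b(X)}\,\|U(t,s)\|^k\,\|h_1\|\cdots\|h_k\|$.

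For the continuity statements I would apply Lemma \ref{Le:continuity} to suitable scalar test functions. The subtlety is that the test function appearing in \eqref{C^k} itself depends on $s$ through the vectors $U(t,s)h_i$. To handle this, at fixed $(s_0,x_0)$ I would split
\[
D^k(P_{s,t}\varphi)(x)(h_1,\ldots,h_k) \;=\; P_{s,t}\bigl(D^k\varphi(\cdot)(U(t,s_0)h_1,\ldots,U(t,s_0)h_k)\bigr)(x) + R(s,x),
\]
where the remainder $R(s,x)$ is a finite sum of terms that, by multilinearity of $D^k\varphi$ and the strong continuity of $s\mapsto U(t,s)$, vanishes uniformly in $x\in X$ as $s\to s_0$. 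The main term is continuous in $(s,x)$ by Lemma \ref{Le:continuity}, yielding continuity of \eqref{as1111}. Under the extra hypothesis that $(s,t)\mapsto U(t,s)$ is continuous on $\Delta$, the same decomposition combined with the full joint continuity statement of Lemma \ref{Le:continuity} gives joint continuity on $\bar{\Delta}\times X$.

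For the H\"older estimate \eqref{as20bis} I would exploit once more the translation structure: since $m^{x+h}(t,s)=m^x(t,s)+U(t,s)h$, formula \eqref{C^k} shows that $[D^k(P_{s,t}\varphi)(x+h)-D^k(P_{s,t}\varphi)(x)](h_1,\ldots,h_k)$ equals
\begin{equation*}
\int_X \bigl[D^k\varphi\bigl(y+m^x(t,s)+U(t,s)h\bigr)-D^k\varphi\bigl(y+m^x(t,s)\bigr)\bigr]\bigl(U(t,s)h_1,\ldots,U(t,s)h_k\bigr)\,\mathcal{N}_{0,Q(t,s)}(dy).
\end{equation*}
Bounding the integrand by $[D^k\varphi]_{C^\sigma}\,\|U(t,s)h\|^\sigma\prod_i\|U(t,s)h_i\|$ and dividing by $\|h\|^\sigma\|h_1\|\cdots\|h_k\|$ yields \eqref{as20bis}. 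The Zygmund inequality \eqref{as20bisZ} is obtained along the same lines, applying \eqref{C^k} with $k$ replaced by $k-1$ and replacing the first difference by the second difference $F(x+2h)-2F(x+h)+F(x)$ inside the integral; this transfers the $Z^1$ seminorm from $D^{k-1}\varphi$ to $D^{k-1}(P_{s,t}\varphi)$.

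I do not anticipate serious obstacles: the argument is mostly careful bookkeeping around the translation invariance of the integration variable against a fixed Gaussian measure. The only genuinely technical point is the inductive justification of differentiation under the integral at each level $k$, which is however routine once one notes that $\|D^k\varphi\|_\infty<+\infty$ supplies a uniform dominating constant at every step of the induction.
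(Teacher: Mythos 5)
Your proposal is correct and follows essentially the same route as the paper: differentiation under the integral sign of the translated identity $P_{s,t}\varphi(x)=\int_X\varphi(y+m^x(t,s))\,\mathcal N_{0,Q(t,s)}(dy)$, iteration in $k$, the H\"older and Zygmund bounds read off from the fact that an increment $h$ in $x$ becomes the increment $U(t,s)h$ in the argument of $\varphi$, and continuity via Lemma \ref{Le:continuity}; your explicit splitting that isolates the $s$-dependence of the test function $D^k\varphi(\cdot)(U(t,s)h_1,\ldots,U(t,s)h_k)$ is a welcome filling-in of a step the paper leaves implicit. The only (harmless) discrepancy is in the constants: your computation yields $\|U(t,s)\|^{k+\sigma}$ in \eqref{as20bis} and $\|U(t,s)\|^{k}$ in \eqref{as20bisZ}, rather than the exponents $k$ and $k-1$ stated in the lemma, which does not affect any subsequent use of these estimates.
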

\begin{proof} 
Formula \eqref{C^k} follows  differentiating $k$ times the identity \[P_{s,t}\varphi (x) = \int_X \varphi ( y + m^x(t,s)) \,\mathcal{N}_{0,Q(t,s)}.\] 

\eqref{as20} is an immediate consequence of  \eqref{C^k}, as well as \eqref{as20bis} for $k\geq 1$ and \eqref{as20bisZ} for $k\geq 2$. 
Instead, \eqref{as20bis} for $k =0$ and \eqref{as20bisZ} for $k=1$ (where we denote as usual $D^0\psi = \psi$ for every $\psi \in C_b(X)$) follow immediately from the representation formula \eqref{P(s,t)}. 

 The proof of the continuity properties of the partial derivatives    is similar to the proof of the continuity of $P_{s,t}\varphi(x)$  in Lemma \ref{Le:continuity}, once \eqref{C^k} is established. 
\end{proof}

\

Lemma \ref{Le:C^k_derivate} yields the next corollary, that will be used in Section \ref{Sect:Schauder}.

\begin{Corollary}
\label{reg_omogenea}
Let Hypothesis \ref{H1} hold. Then for every $\alpha >0$ and for every $\varphi\in C^\alpha_b(X)$, $t>0$, the function 
$$
(s,x)  \in\, [0,t] \times X\mapsto u_0(s,x):= P_{s,t}\varphi (x) \in \,\R,
$$
belongs to $C^{0,\alpha}_b( [0,t] \times X)$, and there is $C= C(\alpha, t)>0$ such that 
\begin{equation}
\label{Holder-Holder}
\|u_0\|_{C^{0,\alpha}_b( [0,t] \times X)} \leq C\| \varphi\|_{C^\alpha_b(X)}. 
\end{equation}
Similarly, if $\varphi\in Z^k_b(X)$ for some $k\in \N$, the function $u_0$ belongs to $Z^{0,k}_b( [0,t] \times X)$, and there is $C= C(k, t)>0$ such that 
\begin{equation}
\label{Zygmund-Zygmund} \|u_1\|_{Z^{0,\alpha}_b( [0,t] \times X)} \leq C\| \varphi\|_{Z^k_b(X)}.
\end{equation}
\end{Corollary}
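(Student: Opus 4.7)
The plan is to reduce the statement directly to Lemmas \ref{Le:continuity} and \ref{Le:C^k_derivate}. Those lemmas already supply the two ingredients we need: pointwise-in-$s$ norm bounds for $P_{s,t}\varphi$ in $C^\alpha_b(X)$ or $Z^k_b(X)$, and joint continuity of $(s,x)\mapsto D^k(P_{s,t}\varphi)(x)(h_1,\ldots,h_k)$ on $[0,t]\times X$ via \eqref{as1111}. So the proof is essentially a verification of the two clauses in the definitions of $C^{0,\alpha}_b$ and $Z^{0,k}_b$.

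For the H\"older assertion I would write $\alpha = n + \sigma$ with $n=[\alpha]\in\mathbb{N}\cup\{0\}$ and $\sigma\in[0,1)$. Estimate \eqref{as20} applied at orders $k=0,1,\ldots,n$, together with \eqref{as20bis} at top order when $\sigma>0$, gives, after summation,
\[\sup_{s\in[0,t]}\|P_{s,t}\varphi\|_{C^\alpha_b(X)} \leq C(\alpha,t)\,\|\varphi\|_{C^\alpha_b(X)},\]
where $t$ enters only through the uniform bound $M$ in Hypothesis \ref{H1}(i). This already yields \eqref{Holder-Holder}. To fully conclude $u_0\in C^{0,\alpha}_b([0,t]\times X)$ it remains to check the continuity clause in the definition of that space. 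For $\alpha\in(0,1)$ it is enough that $(s,x)\mapsto P_{s,t}\varphi(x)$ be continuous, which is Lemma \ref{Le:continuity}. For $\alpha\geq 1$ we additionally need joint continuity on $[0,t]\times X$ of the directional derivatives $(s,x)\mapsto D^k u_0(s,x)(h_1,\ldots,h_k)$ for every $h_1,\ldots,h_k\in X$ and $k\leq [\alpha]$; this is exactly the content of \eqref{as1111}.

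The Zygmund case follows the same template. Bounds \eqref{as20} at orders $0,\ldots,k-1$, together with \eqref{as20bisZ} at top order, yield
\[\sup_{s\in[0,t]}\|P_{s,t}\varphi\|_{Z^k_b(X)} \leq C(k,t)\,\|\varphi\|_{Z^k_b(X)}.\]
For $k\geq 2$, the definition \eqref{def:higherZygmund} of $Z^{0,k}_b$ further requires $u_0\in C^{0,k-1}_b([0,t]\times X)$; this follows from the H\"older part of the corollary applied with the integer exponent $\alpha=k-1$, since $Z^k_b(X)\subset C^{k-1}_b(X)$ continuously. I do not foresee any substantial obstacle: the analytic content has already been absorbed into Lemmas \ref{Le:continuity} and \ref{Le:C^k_derivate}, and what remains is bookkeeping, the only mild subtleties being the distinction between the regimes $\alpha\in(0,1)$ and $\alpha\geq 1$ (only in the latter is continuity of derivatives required) and the auxiliary embedding $Z^k_b\hookrightarrow C^{k-1}_b$ used to handle the extra continuity clause in \eqref{def:higherZygmund} when $k\geq 2$.
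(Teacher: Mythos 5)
Your argument is correct and is exactly the route the paper takes: the paper states that Corollary \ref{reg_omogenea} follows from Lemma \ref{Le:C^k_derivate} and omits the details, which are precisely the bookkeeping you carry out (estimates \eqref{as20}, \eqref{as20bis}, \eqref{as20bisZ} for the uniform-in-$s$ norm bounds, and Lemma \ref{Le:continuity} together with \eqref{as1111} for the joint-continuity clauses in the definitions of $C^{0,\alpha}_b$ and $Z^{0,k}_b$). Your handling of the integer/non-integer split and of the embedding $Z^k_b(X)\subset C^{k-1}_b(X)$ for the case $k\geq 2$ is the right way to close the remaining clauses.
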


We recall that for any Gaussian measure $\mu:= \mathcal{N}_{0,Q }$, the space 
$H:= Q^{1/2} (X)$, endowed with the scalar product  
\[\langle z,x\rangle_H := \langle Q^{-1/2}z, Q^{-1/2}x\rangle,\] is the Cameron-Martin space of $\mu$. So, for every $z\in H$ there exists a Gaussian random variable $\hat{z} \in L^p(X, \mu)$, for every $p\in [1, \infty)$, with \[\|\hat{z}\|_{L^p(X, \mu)} \leq c_p\|z\|_H,\]  such that the Cameron-Martin formula 
\begin{equation}
\label{CM}
\mathcal{N}_{z,Q }(dy ) = \exp  \big(  -\frac 12 \|z\|_H^2+ \hat{z}(y)\big)\,\mathcal{N} _{0,Q }(dy)
\end{equation}
holds. 
Moreover, 
\[\hat{z} (x)= \langle z, x\rangle_H = \langle   Q^{-1/2}z, Q^{-1/2}x\rangle_X,\ \ \ \ x, z \in\,H.\]  By abuse of language we shall write \[ \hat{z} (x)=  \langle   Q^{-1/2}z, Q^{-1/2}x\rangle_X,\ \ \ z\in H,\ \ x\in X.\]  

Accordingly, Hypothesis \ref{H2} means that $U(t,s)$ maps $X$ into the Cameron-Martin space of $\mathcal{N}_{0,Q(t,s) }$; we shall use the notation \[ (U(t,s)h)\hat \;(x) =  \langle  \Lambda(t,s)h, Q(t,s)^{-1/2}x\rangle_X,\ \ \ \ h,\ x\in X,\] as well as the estimate 
\begin{equation}
\label{stimaCM} \|x\mapsto  \langle  \Lambda(t,s)h, Q(t,s)^{-1/2}x\rangle\|_{L^p(X, \mathcal{N}_{0,Q(t,s)})} \leq c_p \|\Lambda(t,s)\|_{\mathcal{L}(X)}\|h\|_X, \quad 0<s<t\leq T,
\end{equation}
for $h \in\,X$ and $1\leq p<+\infty$.

\begin{Theorem}
\label{Th:derivate}
Under Hypotheses \ref{H1} and \ref{H2}, $P_{s,t}\varphi \in\bigcap_{k\in \N}C^k_b(X)$, for all $\varphi \in\,B_b(X)$ and all $0\leq s<t\leq T$, and 
\begin{equation}
\label{as9}
\langle \nabla P_{s,t}\varphi)(x), h\rangle_X =\int_{X}\varphi(y+m^x(t,s))
\,\langle \La(t,s)h,Q^{-1/2}(t,s)y\rangle_X\,\mathcal{N} _{0,Q(t,s)}(dy), \quad h\in X. 
\end{equation}
Moreover, for every $n\geq 2$ we have
\begin{equation}
\label{as2}
\begin{array}{l}
\ds{
 D^n(P_{s,t}\varphi)(x)(h_1,\ldots,h_n)=
\int_X \varphi(m^x(t,s)+y)\,I_n(t,s)(y)(h_1,\ldots,h_n)\,\mathcal{N}_{0,Q(t,s)}(dy),}
\end{array}
\end{equation}
where
\begin{equation}
\label{as12}
\begin{array}{l}
\ds{I_n(t,s)(y)(h_1,\ldots,h_n):=\prod_{i=1}^n
\langle \Lambda(t,s) h_i,Q^{-1/2}(t,s)y\rangle_X}\\
\vs
\ds{+\sum_{s=1}^{r_n} (-1)^s \sum_{\substack{\i_1,\ldots,i_{2s}=1\\ i_{2k-1}<i_{2k}\\i_{2k-1}<i_{2k+1}}}^n\prod_{k=1}^s\langle \Lambda(t,s) h_{i_{2k-1}},\Lambda(t,s) h_{i_{2k}}\rangle_X\prod_{\substack{i_m=1\\i_m\neq  i_1,\ldots,i_{2s}}}^n
\langle \Lambda(t,s) h_{i_m},Q^{-1/2}(t,s)y\rangle_X,}\end{array}\end{equation}
and 
\[r_n=\begin{cases}
n/2,  &  \text{if}\ n\,\text{is even,}\\
(n-1)/2, &  \text{if}\ n\,\text{is odd.}
\end{cases}\]

In particular, for every $n \geq 2$ there exists $C_n>0$ such that
\begin{equation}
\label{as13}
\sup_{x \in\,X}\,\|D^n(P_{s,t}\varphi)(x)\|_{\mathcal{L}^n(X)}\leq C_n\, \|\varphi\|_{\infty}\,\|\La(t,s)\|^n_{\mathcal{L}(X)},\ \ \ \ s<t.
\end{equation}
\end{Theorem}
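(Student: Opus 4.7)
The strategy is to use the Cameron--Martin formula \eqref{CM} to transfer the $x$-dependence of $P_{s,t}\varphi$ from the (merely measurable) argument of $\varphi$ into a smooth exponential weight, and then to differentiate under the integral sign. Fix $(s,t)\in\Delta$ and $x_0\in X$, and write $x=x_0+\xi$. Since $m^{x_0+\xi}(t,s)=m^{x_0}(t,s)+U(t,s)\xi$, Hypothesis~\ref{H2} ensures that $U(t,s)\xi$ lies in the Cameron--Martin space $Q^{1/2}(t,s)(X)$ of $\mathcal{N}_{0,Q(t,s)}$. Translating the measure by $U(t,s)\xi$ and applying \eqref{CM} yields
\begin{equation}
\label{eq:cm-rewrite}
P_{s,t}\varphi(x_0+\xi)=\int_X \varphi(y+m^{x_0}(t,s))\,\Phi(\xi,y)\,\mathcal{N}_{0,Q(t,s)}(dy),
\end{equation}
where
\[\Phi(\xi,y):=\exp\!\Big(\langle\Lambda(t,s)\xi,Q^{-1/2}(t,s)y\rangle_X-\tfrac{1}{2}\|\Lambda(t,s)\xi\|_X^2\Big).\]

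For each fixed $y$, the map $\xi\mapsto\Phi(\xi,y)$ is an entire analytic function on $X$. I would identify its Fr\'echet derivatives at $\xi=0$ through the generating-function identity obtained by setting $\xi=\sum_{i=1}^n\epsilon_i h_i$ and extracting the coefficient of $\epsilon_1\cdots\epsilon_n$. Writing $a_i(y):=\langle\Lambda(t,s)h_i,Q^{-1/2}(t,s)y\rangle_X$ and $b_{ij}:=\langle\Lambda(t,s)h_i,\Lambda(t,s)h_j\rangle_X$, one expands
\[\Phi\!\Big(\sum_i\epsilon_i h_i,y\Big)=\exp\!\Big(\sum_i\epsilon_i a_i(y)-\tfrac{1}{2}\sum_{i,j}\epsilon_i\epsilon_j b_{ij}\Big),\]
and the standard Wick/Hermite combinatorial identity gives as coefficient of $\epsilon_1\cdots\epsilon_n$ the sum over all partitions of $\{1,\ldots,n\}$ into singletons and (unordered) pairs: each singleton $\{i_m\}$ contributes a factor $a_{i_m}(y)$, while each pair $\{i_{2k-1},i_{2k}\}$ contributes $-b_{i_{2k-1}i_{2k}}$. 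This is precisely $I_n(t,s)(y)(h_1,\ldots,h_n)$ as in \eqref{as12}.

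The analytic point is to justify differentiation under the integral in \eqref{eq:cm-rewrite}. Each partial derivative $D^k_\xi\Phi(\xi,y)$ is a polynomial of degree at most $k$ in the variables $a_i(y)$ multiplied by $\Phi(\xi,y)$; by \eqref{stimaCM} the $a_i$ belong to every $L^p(X,\mathcal{N}_{0,Q(t,s)})$ with norm bounded by $c_p\|\Lambda(t,s)\|_{\mathcal{L}(X)}\|h_i\|_X$, while $\Phi(\xi,\cdot)$ is a positive probability density whose $L^q$ norms, computed via the Gaussian Laplace transform, are locally bounded in $\xi$. H\"older's inequality then dominates the difference quotients of $\Phi(\xi,y)$ for $\xi$ in a ball around the origin, and dominated convergence legitimises termwise differentiation of \eqref{eq:cm-rewrite}. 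Evaluation at $\xi=0$, where $\Phi(0,\cdot)\equiv 1$, produces \eqref{as9} for $n=1$ and \eqref{as2} for $n\geq 2$. Fr\'echet (rather than merely G\^ateaux) differentiability follows because the right-hand side of \eqref{as2} is $n$-linear and continuous in $(h_1,\ldots,h_n)$; continuity in $x_0$ of the derivative is obtained by the same dominated-convergence reasoning together with Lemma~\ref{Le:continuity}.

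Finally, estimate \eqref{as13} is a direct H\"older-inequality computation on \eqref{as12}: each linear factor $a_{i_m}(y)$ has $L^n(\mathcal{N}_{0,Q(t,s)})$-norm at most $c_n\|\Lambda(t,s)\|_{\mathcal{L}(X)}\|h_{i_m}\|_X$ by \eqref{stimaCM}, and each quadratic factor $b_{ij}$ is bounded pointwise by $\|\Lambda(t,s)\|^2_{\mathcal{L}(X)}\|h_i\|_X\|h_j\|_X$; summing the universally bounded (in terms of $n$ only) number of terms gives the estimate $C_n\|\varphi\|_\infty\|\Lambda(t,s)\|^n_{\mathcal{L}(X)}$. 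The main obstacle is the combinatorial identification in the second paragraph of the $n$-th $\xi$-derivative of $\Phi$ with the explicit expression $I_n$; everything else is routine once the Cameron--Martin rewriting \eqref{eq:cm-rewrite} is in place.
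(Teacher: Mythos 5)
Your proposal is correct, and it reaches formula \eqref{as2} by a genuinely different route from the paper. Both arguments rest on the same foundation --- the Cameron--Martin rewriting of $P_{s,t}\varphi(x_0+\xi)$ as an integral of $\varphi(y+m^{x_0}(t,s))$ against the density $\Phi(\xi,y)$, with all the $x$-dependence pushed into the smooth weight --- but the paper then proceeds by induction on $n$: it computes the difference quotient of $D^{n-1}(P_{s,t}\varphi)(\cdot)(h_1,\dots,h_{n-1})$ in the direction $h_n$, expands $I_{n-1}(t,s)(y-\e U(t,s)h_n)$ explicitly to first order in $\e$, passes to the limit by dominated convergence, and then rearranges the resulting terms to recognize $I_n$. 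Your generating-function argument replaces that inductive rearrangement by a single Wick/Hermite identity: the coefficient of $\epsilon_1\cdots\epsilon_n$ in $\exp(\sum_i\epsilon_i a_i-\tfrac12\sum_{i,j}\epsilon_i\epsilon_j b_{ij})$ is the sum over partitions of $\{1,\dots,n\}$ into singletons and pairs, which is exactly \eqref{as12} (the paper's index conditions $i_{2k-1}<i_{2k}$, $i_{2k-1}<i_{2k+1}$ are just one enumeration of the unordered pairings). This is cleaner and gives all orders at once; the price is that the analytic justification (domination of the Taylor remainder of $\xi\mapsto\Phi(\xi,y)$ uniformly in $L^1(\mathcal N_{0,Q(t,s)})$ for $\xi$ in a ball, via the locally bounded $L^q$ norms of $\Phi$ and \eqref{stimaCM}) must be done for general $n$ in one shot rather than one derivative at a time. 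Your treatment of \eqref{as13} coincides with the paper's.

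One point of wording to repair: ``Fr\'echet differentiability follows because the right-hand side of \eqref{as2} is $n$-linear and continuous in $(h_1,\dots,h_n)$'' is not by itself a valid justification --- boundedness of the candidate multilinear form does not upgrade G\^ateaux to Fr\'echet differentiability. What does the job is either a direct estimate of the Taylor remainder (available from your uniform $L^1$-domination of the remainder of $\Phi$), or the standard criterion that a G\^ateaux derivative which is continuous in the base point with values in $\mathcal L^n(X)$ is a Fr\'echet derivative; you assert the latter continuity in your last sentence, and it does follow by dominated convergence precisely because the base-point dependence sits in $\Phi$ rather than in the argument of the merely measurable $\varphi$ (this is also why the paper, which works with the untranslated representation of the derivative, has to insert the splitting $P_{s,t}=P_{s,r}P_{r,t}$ to get Lipschitz continuity of the inner factor). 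So the gap is only expository, not mathematical.
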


\begin{proof}
{\em Step 1.}
We prove that $P_{s,t}\varphi \in\,C^1_b(X)$ for every $\varphi \in\,B_b(X)$ and $0<s<t\leq T$,   and that \eqref{as9} holds. 

For every $\e \in\,\mathbb{R}$ and $x, h \in\,X$ we have
\[\begin{array}{l}
\ds{\frac 1\e\left(P_{s,t}\varphi(x+\e h)-P_{s,t}\varphi(x)\right)}
\vs\\
\ds{=\frac 1\e\left( \int_{X}\varphi(y+m^x(t,s))\,\mathcal{N} _{\e\,U(t,s)h,Q(t,s)}(dy)- \int_{X}\varphi(y+m^x(t,s))\,\mathcal{N} _{0,Q(t,s)}(dy)\right).}
\end{array}\]
By assumption  \eqref{as1},  $\e\,U(t,s)h \in\,Q^{1/2}(t,s)(X)$, so that, thanks to the Cameron-Martin formula \eqref{CM}, 
\[\mathcal{N} _{\e\,U(t,s)h,Q(t,s)}(dy)=\exp\le(-\frac 12 \e^2\Vert\La(t,s)h\Vert_X^2+\e\langle \La(t,s)h,Q^{-1/2}(t,s)y\rangle_X\r)\,\mathcal{N} _{0,Q(t,s)}(dy).\]
Therefore, setting for fixed $s<t$ and $h \in\,X$ 
\[f_\e(y):=-\frac 12 \e\,\Vert\La(t,s)h\Vert_X^2+\langle \La(t,s)h,Q^{-1/2}(t,s)y\rangle_X,\]
we get 
 $$\frac 1\e\left(P_{s,t}\varphi(x+\e h)-P_{s,t}\varphi(x)\right)
\vs\\
\ds = \int_{X}\varphi(y+m^x(t,s))
(\exp (\e f_\e(y)) -1)\,\mathcal{N} _{0,Q(t,s)}(dy).
$$
Now, 
\[\lim_{\e\to 0}f_\e(y)= \langle \La(t,s)h,Q^{-1/2}(t,s)y\rangle_X,\ \ \ \ \text{for a.e.}\  y.\]
Moreover 
\[|\exp (\e f_\e(y))-1| \leq C| f_\e(y)| (\exp | f_\e(y)| +1),\]
and
\[x\mapsto | \langle \La(t,s)h,Q^{-1/2}(t,s)y\rangle_X |   \exp(| \langle \La(t,s)h,Q^{-1/2}(t,s)y\rangle_X|) \in L^1(X, 
\mathcal{N} _{0,Q(t,s)}).\] So, by the 
Dominated Convergence theorem we obtain
\begin{equation}
\label{Gateaux}
\lim_{\e\to 0} \frac 1\e\left(P_{s,t}\varphi(x+\e h)-P_{s,t}\varphi(x)\right)=\int_{X}\varphi(y+m^x(t,s))
\,\langle \La(t,s)h,Q^{-1/2}(t,s)y\rangle_X\,\mathcal{N} _{0,Q(t,s)}(dy).
\end{equation}
Denoting by $D_G(P_{s,t}\varphi)(x)h$ the right hand side of \eqref{Gateaux}, by  \eqref{stimaCM}  we get 
\[|D_G(P_{s,t}\varphi)(x)h|\leq 
 \|\varphi\|_{\infty}\,\| \langle \La(t,s)h,Q^{-1/2}(t,s)\cdot \rangle_X\|_{L^1(X, \mathcal{N} _{0,Q(t,s)})} \leq 
 \|\varphi\|_{\infty}\,c_1  \|\La(t,s) \|_{\mathcal{L}(X)}\,\|h\|_X,\]
so that $D_G(P_{s,t}\varphi)(x) \in\,X^\star$. This implies  that $P_{s,t}\varphi$ is G\^ateaux differentiable at $x$ and that $D_G(P_{s,t}\varphi)(x)$ is its G\^ateaux derivative. The same estimate yields that   $P_{s,t}\varphi$ is Lipschitz continuous and  
\begin{equation}
\label{as8}
\le[P_{s,t}\varphi\r]_{\tiny{\text{Lip}}(X)}\leq c_1 \|\varphi\|_\infty \|\Lambda(t,s)\|_{\mathcal{L}(X)}.
\end{equation}

Next, if we show that $D_G(P_{s,t}\varphi):X\to X^\star$ is continuous we conclude that $P_{s,t}\varphi \in\,C^1_b(X)$ and that   \eqref{as9} holds. 
Fixed any $r \in\,(s,t)$, thanks to \eqref{as8} we have
\[\begin{array}{l}
\ds{\le|D_G(P_{s,t}\varphi)(x+k)h-D_G(P_{s,t}\varphi)(x)h\r|}=\le|D_G(P_{t,r}P_{s,r}\varphi)(x+k)h-D_G(P_{t,r}P_{s,r}\varphi)(x)h\r|\\
\vs
\ds{\leq \int_X |P_{s,r}\varphi(y+m^{x+k}(t,r))-P_{s,r}\varphi(y+m^x(t,r))|\,\le|\langle \La(t,r)h,Q^{-1/2}(t,r)y\rangle_X\r|\,\mathcal{N} _{0,Q(t,r)}(dy)}\\
\vs
\ds{\leq \|\La(r,s)\|_{\mathcal{L}(X)}\,\|\varphi\|_{\infty}\Vert m^{x+k}(t,r)-m^x(t,r)\Vert_X\|\La(t,r)\|_{\mathcal{L}(X)}\Vert h\Vert_X}\\
\vs
\ds{\leq \|\La(r,s)\|_{\mathcal{L}(X)}\,\|\varphi\|_{\infty}M\,e^{-\omega(t-r)}\Vert k\Vert_X\|\La(t,r)\|_{\mathcal{L}(X)}\Vert h\Vert_X.}
\end{array}\]
This implies that $D_G(P_{s,t}\varphi):X\to X^\star$ is uniformly continuous (in fact, it is Lipschitz continuous) and therefore $P_{s,t}\varphi \in\,C^1_b(X)$.

\medskip

{\em Step 2.} Now we prove that $P_{s,t}\varphi \in\,C^2_b(X)$  for every $0\leq s<t \leq T$ and $\varphi \in  B_b(X)$,  and 
\begin{equation}
\label{as10}
\begin{array}{l}
\ds{D^2(P_{s,t}\varphi)(x)(h_1,h_2)}\\
\vs
\ds{=\int_{X}\varphi(y+m^x(t,s))
\,\langle \La(t,s)h_1,Q^{-1/2}(t,s)y\rangle_X\,\langle \La(t,s)h_2,Q^{-1/2}(t,s)y\rangle_X\,\mathcal{N} _{0,Q(t,s)}(dy)}\\
\vs
\ds{-\langle \La(t,s)h_1,\La(t,s)h_2\rangle_X\,P_{s,t}\varphi(x).}
\end{array}
\end{equation}

As above, we first prove that for every $s<t$ and $h \in\,X$ the mapping
\begin{equation}
\label{as11}
DP_{s,t}\varphi(\cdot)h :X\to \mathbb{R},
\end{equation}
is G\^ateaux differentiable. Due to \eqref{as9} and to the Cameron-Martin formula, with the same notations introduced in Step 1, for every $\e \in\,\mathbb{R}$ and $k \in\,X$ we have
\[\begin{array}{l}
\ds{\frac 1\e\le[D(P_{s,t}\varphi)(x+\e k)h-D(P_{s,t}\varphi)(x)h\r]}\\
\vs
\ds{=\frac 1\e\le(\int_X\varphi(y+m^x(t,s)) \langle\La(t,s)h,Q^{-1/2}(t,s)(y-\e U(t,s)k)\rangle_X\,\mathcal{N} _{\e U(t,s)k,Q(t,s)}(dy)\r.}\\
\vs
\ds{\le.-\int_X\varphi(y+m^x(t,s)) \langle\La(t,s)h,Q^{-1/2}(t,s)(y)\rangle_X\,\mathcal{N} _{0,Q(t,s)}(dy)\r)=}\\
\vs
\ds{=\frac 1\e\,\int_X\varphi(y+m^x(t,s)) \langle\La(t,s)h,Q^{-1/2}(t,s)y\rangle_X\,\le(\exp\le(\e f_\e(y)\r)-1\r)\mathcal{N} _{0,Q(t,s)}(dy)}\\
\vs
\ds{-\int_X\varphi(y+m^x(t,s)) \langle\La(t,s)h,\La(t,s)k\rangle_X\,\exp\le(\e f_\e(y)\r)\,\mathcal{N} _{0,Q(t,s)}(dy).}
\end{array}\]
Proceeding as in Step 1, we obtain
\begin{equation}
\label{second}
\begin{array}{l}
\ds{\lim_{\e\to 0}\frac 1\e\le[D(P_{s,t}\varphi)(x+\e k)h-D(P_{s,t}\varphi)(x)h\r]}\\
\vs
\ds{=\int_X\varphi(y+m^x(t,s)) \langle\La(t,s)h,Q^{-1/2}(t,s)y\rangle_X \langle\La(t,s)k,Q^{-1/2}(t,s)y\rangle_X\,\mathcal{N} _{0,Q(t,s)}(dy)}\\
\vs
\ds{-\langle\La(t,s)h,\La(t,s)k\rangle_X\,P_{s,t}\varphi(x),}
\end{array}
\end{equation}
and then the right-hand side of \eqref{second} is the Gateaux derivative of $DP_{s,t}\varphi(\cdot)h$ at $x$. In particular, 
 \eqref{as10} holds and hence \eqref{as2}
in the case $n=2$.
Again, as in Step 1, we can show that the mapping
\[D_G(D(P_{s,t}\varphi)(\cdot)h):X\to X^\star,\]
is continuous, so that we  conclude that $P_{s,t}\varphi \in\,C^2_b(X)$ and \eqref{as10} holds.

\medskip

{\em Step 3.} We prove that $P_{s,t} \in\,C^n_b(X)$ for every $n\geq 3$,  and \eqref{as2} holds. 

We proceed by induction, assuming that $P_{s,t}\varphi \in\,C_b^{n-1}(X)$ and that formula \eqref{as2} holds for $D^{n-1}(P_{s,t}\varphi )$.
Thus, according again to the Cameron-Martin formula, for every $h_1,\ldots,h_{n-1}, h_n \in\,X$  and $\e \in \,\mathbb{R}$ we have
\[\begin{array}{l}
\ds{D^{n-1}(P_{s,t}\varphi)(x+\e h_{n})(h_1,\ldots,h_{n-1})}\\
\vs
\ds{= \int_X \varphi(m^x(t,s)+y)I_{n-1}(t,s)(y-\e U(t,s)h_n)\exp\le(\e f_\e(y)\r)\,\mathcal{N}_{0,Q(t,s)}(dy),}
\end{array}\]
where $I_{n-1}(t,s)(y)$ is defined as in \eqref{as12} and 
\[f_\e(y):=-\frac 12 \e\,\Vert\La(t,s)h_n\Vert_X^2+\langle \La(t,s)h_n,Q^{-1/2}(t,s)y\rangle_X.\]
Moreover we have
\[\begin{array}{l}
\ds{I_{n-1}(t,s)(y-\e U(t,s)h_n)=I_{n-1}(t,s)(y)}\\
\vs
\ds{-\e\,\sum_{i=1}^{n-1}
\langle \La(t,s)h_i,\La(t,s)h_n\rangle_X\prod_{\substack{j=1\\j\neq i}}^{n-1}\langle \La(t,s)h_j,Q^{-1/2}(t,s)y\rangle_X}\\
\vs
\ds{-\e\,\sum_{s=1}^{r_{n-1}} (-1)^s \sum_{\substack{\i_1,\ldots,i_{2s}=1\\ i_{2k-1}<i_{2k}\\i_{2k-1}<i_{2k+1}}}^{n-1}\prod_{k=1}^s\langle \Lambda(t,s) h_{i_{2k-1}},\Lambda(t,s) h_{i_{2k}}\rangle_X}\\
\vs
\ds{\times\sum_{\substack{i_m=1\\i_m\neq  i_1,\ldots,i_{2s}}}^{n-1} \langle \Lambda(t,s) h_{i_m},\Lambda(t,s) h_{n}\rangle_X\prod_{\substack{i_j=1\\i_j\neq  i_m, i_1,\ldots,i_{2s}}}^{n-1}
\langle \Lambda(t,s) h_{i_j},Q^{-1/2}(t,s)y\rangle_X+O(\e^2).}
\end{array}\]
Through  the same arguments used at Step 1  and Step 2, this implies that
\[\begin{array}{l}
\ds{\lim_{\e\to 0} \frac 1\e\,\le(D^{n-1}(P_{s,t}\varphi)(x+\e h_{n})(h_1,\ldots,h_{n-1})-D^{n-1}(P_{s,t}\varphi)(x)(h_1,\ldots,h_{n-1})\r)}\\
\vs
\ds{=\int_X \varphi(m^x(t,s)+y)I_{n-1}(t,s)(y)\langle \La(t,s)h_n,Q^{-1/2}(t,s)y\rangle_X \mathcal{N}_{0,Q(t,s)}(dy)}\\
\vs
\ds{-\int_X \varphi(m^x(t,s)+y)\Big[\sum_{i=1}^{n-1}
\langle \La(t,s)h_i,\La(t,s)h_n\rangle_X\prod_{\substack{j=1\\j\neq i}}^{n-1}\langle \La(t,s)h_j,Q^{-1/2}(t,s)y\rangle_X}\\
\vs
\ds{+\sum_{s=1}^{r_{n-1}} (-1)^s \sum_{\substack{\i_1,\ldots,i_{2s}=1\\ i_{2k-1}<i_{2k}\\i_{2k-1}<i_{2k+1}}}^{n-1}\prod_{k=1}^s\langle \Lambda(t,s) h_{i_{2k-1}},\Lambda(t,s) h_{i_{2k}}\rangle_X}\\
\vs
\ds{\times\sum_{\substack{i_m=1\\i_m\neq  i_1,\ldots,i_{2s}}}^{n-1} \langle \Lambda(t,s) h_{i_m},\Lambda(t,s) h_{n}\rangle_X\prod_{\substack{i_j=1\\i_j\neq  i_m, i_1,\ldots,i_{2s}}}^{n-1}
\langle \Lambda(t,s) h_{i_j},Q^{-1/2}(t,s)y\rangle_X\Big] \mathcal{N}_{0,Q(t,s)}(dy).}
\end{array}\]
Thus, if we rearrange all these terms, it is not difficult to show that the r.h.s. above coincides with the expression for $D^n(P_{s,t}\varphi)(y)(h_1,\ldots,h_n)$ given in \eqref{as2}. 
The fact that $P_{s,t}\varphi \in\,C_b^n(X)$ and that  its derivative coincides with \eqref{as2} follows now from the same arguments used in Step 1 and Step 2; estimate  \eqref{as13}  follows  from \eqref{stimaCM}. 
\end{proof}

As a consequence  of the previous theorem, we have the following result.
\begin{Corollary}
\label{Cor:1}
Under Hypotheses \ref{H1} and \ref{H2}, 
for every $n, k \geq 0$, with $k+n\geq 1$, and for every $\varphi \in\,C^k_b(X)$ and $0\leq s<t\leq T$, $P_{s,t}\varphi \in\,C^{k+n}_b(X)$ and
\begin{equation}
\label{as25}
\begin{array}{l}
\ds{D^{k+n}(P_{s,t}\varphi)(x)(h_1,\ldots,h_{n+k})}\\
\vs
\ds{=\int_X D^k\varphi(m^x(t,s)+y)(U(t,s)h_1,\ldots,U(t,s)h_k)\,\,I_n(t,s)(y)(h_{k+1},\ldots,h_{k+n})\,\mathcal{N}_{0,Q(t,s)}(dy).}
\end{array}
\end{equation}

For every $\alpha_2 \geq  \alpha_1\geq 0$ there is $C= C(\alpha_1, \alpha_2)$ such that 
\begin{equation}
\label{stima_generale}
\|P_{s,t}\varphi \|_{C^{\alpha_2}_b(X)} \leq C (\|\La(t,s)\|_{\mathcal{L}(X)}^{\alpha_2 -\alpha_1} +1)\,\|\varphi\|_{C^{\alpha_1}_b(X)},\ \ \ \ \varphi\in C^{\alpha_1}_b(X), \; 0\leq s<t\leq T. 
\end{equation}

Moreover, for every $k\in \N$, $h_1$, \ldots, $h_k\in X$, $t\in [0,T]$,  $\varphi \in C_b(X)$,   the function 
\[(s,x) \in\,[0,t) \times X \mapsto D^kP_{s,t}\varphi(x)(h_1, \ldots, h_k) \in\,\mathbb{R},\]   is continuous. 

If in addition the mapping $U$ is continuous in $\Delta$ with values in 
$\mathcal L(X)$, then for every $\varphi \in\,C_b(X)$ the function 
\[(s,t,x) \in\,\Delta\times X\mapsto D^kP_{s,t}\varphi(x)(h_1, \ldots, h_k) \in\,\mathbb{R},\] 
is continuous.
\end{Corollary}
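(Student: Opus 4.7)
Proof plan. Formula \eqref{as25} follows by iterating the two established differentiation formulas. Given $\varphi \in C^k_b(X)$, Lemma \ref{Le:C^k_derivate} yields
\[
D^k(P_{s,t}\varphi)(x)(h_1,\ldots,h_k) = P_{s,t}\bigl(\varphi_h\bigr)(x),\qquad \varphi_h(y) := D^k\varphi(y)(U(t,s)h_1,\ldots,U(t,s)h_k),
\]
and since $\varphi_h \in C_b(X) \subset B_b(X)$, applying Theorem \ref{Th:derivate} to $P_{s,t}\varphi_h$ and differentiating $n$ further times via formula \eqref{as2} and substituting the explicit expression for $\varphi_h$ gives \eqref{as25}. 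A standard argument on iterated partial Fr\'echet derivatives that are bounded and multilinear in the increments confirms that $D^{k+n}(P_{s,t}\varphi)$ exists as a total derivative and equals this expression.

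For estimate \eqref{stima_generale}, write $\alpha_1 = k_1 + \sigma_1$ with $k_1 \in \mathbb N\cup\{0\}$ and $\sigma_1\in[0,1)$. The strategy is to establish two endpoint bounds and interpolate. Lemma \ref{Le:C^k_derivate} gives the trivial endpoint $\|P_{s,t}\varphi\|_{C^{\alpha_1}_b(X)} \leq C \|\varphi\|_{C^{\alpha_1}_b(X)}$. For any integer $N \geq 1$, applying \eqref{as25} with $k = k_1$ and $1\leq n \leq N$, and bounding each $I_n$-factor through \eqref{as13} and \eqref{stimaCM}, one obtains $\|D^{k_1+n} P_{s,t}\varphi\|_\infty \leq C\|\Lambda(t,s)\|^n \|\varphi\|_{C^{k_1}_b(X)}$ for $1 \leq n \leq N$. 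Pulling the difference $D^{k_1}\varphi(m^x+y)-D^{k_1}\varphi(m^{x'}+y)$ inside the integral in \eqref{as25} with $k=k_1,\,n=N$, and using the $C^{\sigma_1}$-H\"older regularity of $D^{k_1}\varphi$, also yields $[D^{k_1+N}P_{s,t}\varphi]_{C^{\sigma_1}} \leq C\|\Lambda(t,s)\|^N \|\varphi\|_{C^{\alpha_1}_b(X)}$. Combined, these bounds give the endpoint estimate
\[
\|P_{s,t}\varphi\|_{C^{\alpha_1 + N}_b(X)} \leq C\bigl(1 + \|\Lambda(t,s)\|^N\bigr) \|\varphi\|_{C^{\alpha_1}_b(X)}.
\]
Real interpolation between $C^{\alpha_1}_b$ and $C^{\alpha_1+N}_b$ with parameter $\theta = (\alpha_2-\alpha_1)/N\in[0,1]$, combined with $(1+x)^\theta \leq 1+x^\theta$ for $x\geq 0$, then gives \eqref{stima_generale} for any $\alpha_2\in[\alpha_1,\alpha_1+N]$; taking $N$ large enough covers all $\alpha_2\geq\alpha_1$.

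For the continuity assertions, fix $t\in[0,T]$, $\varphi\in C_b(X)$, $h_1,\ldots,h_k\in X$ and a point $(s_0,x_0)\in[0,t)\times X$, and pick $r\in(s_0,t)$. For $s$ in a neighborhood of $s_0$ with $s<r$, Theorem \ref{Th:derivate} ensures $\psi_r:=P_{r,t}\varphi\in C^\infty_b(X)$, and the semigroup identity $P_{s,t}=P_{s,r}\circ P_{r,t}$ combined with Lemma \ref{Le:C^k_derivate} yields
\[
D^k(P_{s,t}\varphi)(x)(h_1,\ldots,h_k) = P_{s,r}(g_s)(x),\qquad g_s(y) := D^k\psi_r(y)(U(r,s)h_1,\ldots,U(r,s)h_k).
\]
Since $U(r,s)h_i\to U(r,s_0)h_i$ by Hypothesis \ref{H1}(i), and $D^k\psi_r$ is bounded continuous into $\mathcal L^k(X)$, one gets $g_s\to g_{s_0}$ uniformly on $X$; combined with the continuity of $(s,x)\mapsto P_{s,r}(g_{s_0})(x)$ at $(s_0,x_0)$ from Lemma \ref{Le:continuity}, this yields continuity of $(s,x)\mapsto D^k(P_{s,t}\varphi)(x)(h_1,\ldots,h_k)$ at $(s_0,x_0)$. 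The joint continuity in $(s,t,x)\in\Delta\times X$ when $U$ is continuous in $\Delta$ follows from the same decomposition with $r$ fixed in $(s_0,t_0)$: a further intermediate time $r'\in(r,t_0)$ applied through $P_{r,t}\varphi=P_{r,r'}(P_{r',t}\varphi)$ lets one use the continuity parts of Lemmas \ref{Le:continuity} and \ref{Le:C^k_derivate} to handle the dependence on $t$.

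The main obstacle is the interpolation step: the power of $\|\Lambda(t,s)\|$ must emerge exactly as $\alpha_2-\alpha_1$, which forces one to pair the $C^{\alpha_1}_b\to C^{\alpha_1}_b$ bound (with no $\Lambda$) against the $C^{\alpha_1}_b\to C^{\alpha_1+N}_b$ bound (with $\|\Lambda\|^N$), rather than the cruder $C_b\to C^N_b$ bound, in order to correctly exploit the smoothness already present in $\varphi$.
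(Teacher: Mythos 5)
Your proposal is correct and follows essentially the same route as the paper: \eqref{as25} is obtained by composing \eqref{C^k} with Theorem \ref{Th:derivate}, the estimate \eqref{stima_generale} comes from the two endpoint bounds (sup-norm and $C^{\sigma_1}$-seminorm of the highest derivative, each carrying the right power of $\|\Lambda(t,s)\|$) followed by the interpolation inequality of Proposition \ref{Le:C^1interp}, and the continuity is reduced via the splitting $P_{s,t}=P_{s,r}P_{r,t}$ to Lemma \ref{Le:C^k_derivate} applied to the smoothed datum $P_{r,t}\varphi$. The only cosmetic difference is that you interpolate in one step between $C^{\alpha_1}_b$ and $C^{\alpha_1+N}_b$, while the paper first settles integer increments via $\sup_{\xi>0}\xi^k/(\xi^n+1)<\infty$ and then interpolates between consecutive spaces $C^{\alpha_1+n}_b$ and $C^{\alpha_1+n+1}_b$; both variants are covered by \eqref{Jsigma}.
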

\begin{proof}
From \eqref{as20} and \eqref{as20bis} we already know that \eqref{stima_generale} holds for $\alpha_1=\alpha_2$. So, we may assume that $\alpha_2 >  \alpha_1$. 

Formula \eqref{as25} follows combining together \eqref{C^k} and \eqref{as2}.  It yields the existence of  $C=C(k,n)$ such that 
\begin{equation}
\label{as26}
\sup_{x \in\,X}\|D^{k+n}(P_{s,t}\varphi)(x)\|_{\mathcal{L}^{k+n}(X)}\leq C\|\La(t,s)\|_{\mathcal{L}(X)}^n\,\|\varphi\|_{C^k_b(X)},\ \ \ \ 0\leq s<t\leq T, \; \varphi \in C^k_b(X), 
\end{equation}
and,  for  every $\alpha \in (0,1)$, 
\begin{equation}
\label{as27}
 \|D^{k+n}(P_{s,t}\varphi) \|_{C^\alpha (X, \mathcal{L}^{k+n}(X))}\leq C\|\La(t,s)\|_{\mathcal{L}(X)}^n\,\|\varphi\|_{C^{k+\alpha}_b(X)},\ \ \ \ 0\leq s<t\leq T, \; \varphi\in C^{k+\alpha}_b(X). 
\end{equation}
Through the obvious estimate $\sup_{\xi >0} \xi^k /(\xi^n +1) <+\infty$ for $k = 1, \ldots, n-1$, 
\eqref{as26} and \eqref{as27} imply \eqref{stima_generale} in the case $\alpha_2-\alpha_1\in \N$. If $\alpha_2-\alpha_1\notin \N$, we set $\alpha_2 = \alpha_1 + n + \sigma$, with $n\in \N \cup\{0\}$, $\sigma\in (0, 1)$, and  we use the interpolation inequality
\begin{equation}
\label{Jsigmanostro}
\| \psi\|_{C^{\alpha_2} (X )} \leq c \| \psi\|_{C^{\alpha_1+n}_b(X)}^{1-\sigma} \| \psi\|_{C^{\alpha_1+n +1}_b(X)}^\sigma , \quad \psi\in  C^{\alpha_1+n +1}_b(X)
\end{equation}
applied to $\psi =  P_{s,t}\varphi$, and estimates \eqref{stima_generale} with $\alpha_2$ replaced by $\alpha_1 +n$ and by $\alpha_1 +n +1$, respectively. 

The assertions about the continuity of the derivatives are consequences of Lemma \ref{Le:C^k_derivate} and Theorem \ref{Th:derivate}. Indeed, fixed any $t>0$ and $\varepsilon \in (0, t)$, we have $P_{s,t}\varphi = P_{s,t-\varepsilon}P_{t-\varepsilon,t}\varphi $ for $s  \leq t-\varepsilon$. Since $\psi:=P_{t-\varepsilon,t}\varphi \in C^k_b(X)$ by Theorem \ref{Th:derivate}, the function
\[(s,x) \in\,[0, t-\varepsilon]\times X\mapsto D^k P_{s,t-\varepsilon}\psi (x)(h_1, \ldots, h_k) =  D^kP_{s,t}\varphi(x)(h_1, \ldots, h_k) \in\,\mathbb{R},\] is continuous  by Lemma \ref{Le:C^k_derivate}. 
The proof of the last claim is similar. 
\end{proof}

Estimate \eqref{Jsigmanostro} should be known; a proof for $\alpha_1 =0$ is in \cite{L1}. However we were not able to find its proof in full generality and therefore we provide it.

\begin{Proposition}
\label{Le:C^1interp}
If $X$ is any Banach space and $0\leq \alpha_1 <\alpha<\alpha_2$, there is $C=C(\alpha_1, \alpha, \alpha_2)>0$ such that 
\begin{equation}
\label{Jsigma}
\| \psi\|_{C^{\alpha}_b (X )} \leq C \| \psi\|_{C^{\alpha_1 }_b(X)}^{\frac{\alpha_2 -\alpha}{\alpha_2-\alpha_1}} \| \psi\|_{C^{\alpha_2}_b(X)}^{\frac{\alpha -\alpha_1}{\alpha_2-\alpha_1}} , \quad \psi\in  C^{\alpha_2}_b(X).
\end{equation}
\end{Proposition}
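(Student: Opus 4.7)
The plan is to reduce \eqref{Jsigma} to two basic ingredients: (i) an elementary multiplicative H\"older interpolation, which handles the case when $\alpha_1, \alpha, \alpha_2$ all lie in a single integer interval; and (ii) the classical one-dimensional Landau--Kolmogorov inequality for the supremum norms of derivatives, pulled back to $X$ along lines. The full statement is then assembled from (i) and (ii), with a short algebraic rebalancing of exponents to match $\theta := (\alpha_2-\alpha)/(\alpha_2-\alpha_1)$.

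For (i), in the range $\alpha_1, \alpha, \alpha_2 \in [0,1]$, we fix $x \in X$ and $h \in X\setminus\{0\}$ and bound $|\psi(x+h) - \psi(x)|$ simultaneously by $[\psi]_{C^{\alpha_1}(X)}\|h\|_X^{\alpha_1}$ (or by $2\|\psi\|_\infty$ if $\alpha_1 = 0$) and by $[\psi]_{C^{\alpha_2}(X)}\|h\|_X^{\alpha_2}$. Raising the first to the power $\theta$ and the second to $1-\theta$ and multiplying yields
\[
|\psi(x+h) - \psi(x)| \leq C\, \|\psi\|_{C^{\alpha_1}_b(X)}^{\theta}\, \|\psi\|_{C^{\alpha_2}_b(X)}^{1-\theta}\, \|h\|_X^{\alpha},
\]
because $\theta\alpha_1 + (1-\theta)\alpha_2 = \alpha$; combined with the trivial bound $\|\psi\|_\infty \leq \|\psi\|_{C^{\alpha_1}_b}^\theta\|\psi\|_{C^{\alpha_2}_b}^{1-\theta}$, this settles the sub-integer case. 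The argument is purely metric in the target, so it applies verbatim to Banach-valued maps, and in particular to the Fr\'echet derivative $D^k\psi \colon X \to \mathcal{L}^k(X)$.

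For (ii), we fix $x \in X$ and a unit vector $h \in X$ and set $g(t) := \psi(x+th)$; then $g^{(j)}(t) = D^j\psi(x+th)(h,\ldots,h)$, so $\|g\|_\infty \leq \|\psi\|_\infty$ and $\|g^{(n)}\|_\infty \leq \|D^n\psi\|_\infty$. The classical Landau--Kolmogorov inequality on $\mathbb R$ then gives $|g^{(k)}(0)| \leq C_{n,k}\|g\|_\infty^{1-k/n}\|g^{(n)}\|_\infty^{k/n}$ for $0 \leq k \leq n$, and the polarization estimate
\[
\|D^k\psi(x)\|_{\mathcal L^k(X)} \leq \frac{k^k}{k!}\sup_{\|h\|_X=1}|D^k\psi(x)(h,\ldots,h)|
\]
for symmetric $k$-linear forms lets us pass to the full operator norm. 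Taking suprema over $x$ produces the integer-derivative interpolation $\|D^k\psi\|_\infty \leq C\|\psi\|_\infty^{1-k/n}\|D^n\psi\|_\infty^{k/n}$ for every $0 \leq k \leq n$.

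Finally, each term appearing in $\|\psi\|_{C^\alpha_b(X)}$, namely $\|D^k\psi\|_\infty$ for $0 \leq k \leq \lfloor\alpha\rfloor$ and, when $\alpha \notin \N$, the H\"older seminorm $[D^{\lfloor\alpha\rfloor}\psi]_{C^{\alpha - \lfloor\alpha\rfloor}(X,\mathcal L^{\lfloor\alpha\rfloor}(X))}$, is estimated by combining (i) (applied if necessary to the Banach-valued derivative $D^{\lfloor\alpha\rfloor}\psi$) with (ii) for a conveniently chosen integer $n \geq \alpha_2$. After absorbing $\|\psi\|_\infty \leq \|\psi\|_{C^{\alpha_1}_b}$ and $\|D^n\psi\|_\infty \leq \|\psi\|_{C^{\alpha_2}_b}$, and using the easy monotonicity $\|\psi\|_{C^{\alpha_1}_b} \leq C\|\psi\|_{C^{\alpha_2}_b}$ to rebalance powers, the factors recombine to give the claimed exponents $\theta$ and $1-\theta$. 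The main obstacle is precisely this bookkeeping across the different configurations of $\lfloor\alpha_1\rfloor$, $\lfloor\alpha\rfloor$, $\lfloor\alpha_2\rfloor$: verifying that the $k/n$ factors from Kolmogorov and the $\theta$ factor from the multiplicative trick recombine correctly reduces in each case to the elementary identity $\alpha = \theta\alpha_1 + (1-\theta)\alpha_2$.
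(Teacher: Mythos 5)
Your ingredients (i) and (ii) are individually sound: the sub\-/integer H\"older interpolation, the restriction to lines $g(t)=\psi(x+th)$, and the polarization bound $\|T\|_{\mathcal L^k(X)}\le \frac{k^k}{k!}\sup_{\|h\|_X=1}|T(h,\ldots,h)|$ for symmetric forms are all fine, and for $\alpha_1=0$ with $\alpha_2\in\N$ they do combine to the right exponents. But the assembly has a genuine gap, and it is not mere bookkeeping. The classical Landau--Kolmogorov inequality is anchored at $\|g\|_\infty$ and $\|g^{(n)}\|_\infty$ with $n$ an \emph{integer}, and this is too rigid in two of the configurations the proposition covers. First, if $\alpha_2\notin\N$ there is no admissible integer $n\ge\alpha_2$: a function in $C^{\alpha_2}_b(X)$ only has derivatives up to order $[\alpha_2]$, so the bound $\|D^n\psi\|_\infty\le\|\psi\|_{C^{\alpha_2}_b(X)}$ you invoke is only available for $n\le[\alpha_2]<\alpha_2$, and Landau--Kolmogorov with $n=[\alpha_2]$ puts weight $k/[\alpha_2]>k/\alpha_2$ on the strong norm. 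Second, even when $\alpha_2\in\N$, if $\alpha_1>0$ the target weight on the strong norm is $(\alpha-\alpha_1)/(\alpha_2-\alpha_1)$, which is strictly smaller than any $k/n$ your scheme can produce: e.g.\ for $\alpha_1=1/2$, $\alpha=1$, $\alpha_2=2$ you obtain $\|D\psi\|_\infty\le C\|\psi\|_\infty^{1/2}\|D^2\psi\|_\infty^{1/2}$, whereas the proposition asserts the exponent pair $(2/3,1/3)$. In both situations the exponent you get on the \emph{strong} norm is too large, and the monotonicity $\|\psi\|_{C^{\alpha_1}_b(X)}\le C\|\psi\|_{C^{\alpha_2}_b(X)}$ only allows shifting weight \emph{onto} the strong norm, never off it, so the proposed rebalancing cannot repair the estimate. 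The root cause is that neither of your ingredients ever produces a bound for $\|D^k\psi\|_\infty$ whose right-hand side involves a H\"older \emph{seminorm} of $\psi$ rather than $\|\psi\|_\infty$, and that is exactly what is required.

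The missing idea is a two-sided, fractional-index analogue of Landau--Kolmogorov, which is what the paper's Step 1 supplies: from $\varphi(x+th)-\varphi(x)-D\varphi(x)(th)=\int_0^1(D\varphi(x+t\sigma h)-D\varphi(x))(th)\,d\sigma$ one gets $\|D\varphi\|_\infty\le[\varphi]_{C^{\alpha_1}}t^{\alpha_1-1}+\alpha_2^{-1}[D\varphi]_{C^{\alpha_2-1}}t^{\alpha_2-1}$ for all $t>0$, and optimizing over $t$ yields the correct exponents $(\alpha_2-1)/(\alpha_2-\alpha_1)$ and $(1-\alpha_1)/(\alpha_2-\alpha_1)$ with \emph{both} indices allowed to be non-integer. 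The paper then reaches the general statement by two further reductions you would also need: passing to $D^{[\alpha_1]}\psi$ to treat $\alpha_1\ge1$ with $\alpha_2-\alpha_1\le1$, and a reiteration/recurrence that doubles the admissible gap $\alpha_2-\alpha_1$ at each stage. If you replace your ingredient (ii) by this fractional version (which you can still prove on lines and then polarize), your outline becomes a viable alternative route; as written, it does not close.
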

\begin{proof}
The proof is in three steps. First we show that for  every Banach space $Y$ and for $\alpha_1\in [0, 1)$, $\alpha_2\in (\alpha_1, \alpha_1 +1]$  and for every $\alpha \in (\alpha_1, \alpha_2)$ there is $C>0$ such that  
\begin{equation}
\label{interp1}
\| \varphi \|_{C^{\alpha}_b (X; Y )} \leq C \| \varphi \|_{C^{\alpha_1 }_b(X;Y)}^{\frac{\alpha_2 -\alpha}{\alpha_2-\alpha_1}} \| \psi\|_{C^{\alpha_2}_b(X; Y)}^{\frac{\alpha -\alpha_1}{\alpha_2-\alpha_1}} , \quad \varphi \in  C^{\alpha_2}_b(X; Y). 
\end{equation}
As a second step, we use \eqref{interp1} to prove that \eqref{Jsigma} holds for 
$0\leq \alpha_1 <\alpha<\alpha_2\leq \alpha_1 +1$. In the third step we prove \eqref{Jsigma} in its full generality, by recurrence. 

\vspace{3mm}

\noindent {\em Step 1.}  First we consider the case  $0 \leq \alpha_1 <\alpha \leq \alpha_2\leq 1$. For every $\varphi  \in C^{\alpha_2}_b(X, Y)$  we have
$$\|\varphi  (x) - \varphi (y)\|_Y \leq \left\{ \begin{array}{ll} \Lambda_1(x,y):= [\varphi ]_{C^{\alpha_1}(X, Y)} \|x-y\|_X^{\alpha_1}
\\
\\
\Lambda_2(x,y):=[\varphi ]_{C^{\alpha_2}(X, Y)} \|x-y\|_X^{\alpha_2}
\end{array}\right. $$
with $\Lambda_1(x,y)$ replaced by $2\|\varphi \|_{\infty}$, if $\alpha_1 =0$, and $\Lambda_2(x,y)$ replaced by $\|D\varphi\|_{C_b(X, \mathcal L(X, Y))}  \|x-y\|_X$, if $\alpha_2 =1$. In any case, raising (i) to $\frac{\alpha_2 -\alpha}{\alpha_2-\alpha_1}$ and (ii) to $\frac{\alpha -\alpha_1}{\alpha_2-\alpha_1}$ we get 
$$
[\varphi ]_{C^{\alpha} (X,Y)} \leq C [\varphi ]_{C^{\alpha_1 }_b(X;Y)}^{\frac{\alpha_2 -\alpha}{\alpha_2-\alpha_1}}[\varphi ]_{C^{\alpha_2}_b(X; Y)}^{\frac{\alpha -\alpha_1}{\alpha_2-\alpha_1}},
$$
with $C=1$, if $\alpha_1>0$, and $C=2^{\frac{\alpha_2 -\alpha}{\alpha_2-\alpha_1}}<2$, if $\alpha_1=0$. Adding $\|\varphi\|_{\infty}$ to both sides we get 
\begin{equation}
\label{interp2}
\|\varphi \|_{C^{\alpha} (X,Y)} \leq 3 \|\varphi \|_{C^{\alpha_1 }_b(X;Y)}^{\frac{\alpha_2 -\alpha}{\alpha_2-\alpha_1}}\|\varphi\|_{C^{\alpha_2}_b(X; Y)}^{\frac{\alpha -\alpha_1}{\alpha_2-\alpha_1}}.
\end{equation}

Now we consider the case  $\alpha =1$ and $0<\alpha_1 < 1 <\alpha_2<2$. For every $\varphi\in C^{\alpha_2}_b(X; Y)$ and $x$, $h\in X$, with $\|h\|_X =1$, and  $t>0$ we have
$$\varphi(x+th) - \varphi(x) - D\varphi(x)(th) = \int_0^1 (D\varphi(x+t\sigma h) - D\varphi(x))(th) \,d\sigma$$
so that 
$$\| D\varphi(x)(th) \|_Y \leq  [\varphi ]_{C^{\alpha_1 }_b(X;Y)} t^{\alpha_1} + \int_0^1 \sigma^{\alpha_2-1 } d\sigma  [D\varphi ]_{C^{\alpha_2-1 }_b(X;\mathcal L(X,Y))}t^{\alpha_2} . $$
It follows 
\begin{equation}
\label{interp3}
\|D\varphi\|_{C_b  (X;\mathcal L(X,Y))}\leq  [\varphi ]_{C^{\alpha_1 }_b(X;Y)} t^{\alpha_1-1} + \frac{1}{\alpha_2}  [D\varphi ]_{C^{\alpha_2-1 }_b(X;\mathcal L(X,Y))}t^{\alpha_2 -1}, \quad t>0. 
\end{equation}
If $ [D\varphi ]_{C^{\alpha_2-1 }_b(X;\mathcal L(X,Y))} =0$, $D\varphi$ is constant so that $\varphi $ is affine, and since it is bounded it is constant. Therefore, 
$\|\varphi\|_{C^{\beta} (X,Y)} = \|\varphi\|_{C_b(X,Y)} $ for every $\beta >0$, and \eqref{interp1} obviously holds. If $ [D\varphi ]_{C^{\alpha_2-1 }_b(X;\mathcal L(X,Y))} \neq 0$, we choose 
\[t= (  [\varphi ]_{C^{\alpha_1 }_b(X;Y)}/  [D\varphi ]_{C^{\alpha_2-1 }_b(X;\mathcal L(X,Y))} )^{\frac 1{\alpha_2-\alpha_1}},\] 
so that \[[\varphi ]_{C^{\alpha_1 }_b(X;Y)} t^{\alpha_1-1}  =   [D\varphi ]_{C^{\alpha_2-1 }_b(X;\mathcal L(X,Y))}t^{\alpha_2 -1}.\]  This implies that
\begin{equation}
\label{interp4}
\|D\varphi\|_{C_b  (X;\mathcal L(X,Y))}\leq 2 [\varphi ]_{C^{\alpha_1 }_b(X;Y)}^{ \frac{ \alpha_2-1}{\alpha_2-\alpha_1}}  [D\varphi ]_{C^{\alpha_2-1 }_b(X;\mathcal L(X,Y))}^{\frac{1 -\alpha_1}{\alpha_2-\alpha_1}} 
\end{equation}
which yields
$$\|\varphi\|_{C^1_b(X,Y)} \leq 3 \|\varphi \|_{C^{\alpha_1 }_b(X;Y)}^{ \frac{\alpha_2 -1}{\alpha_2-\alpha_1}}  \|\varphi \|_{C^{\alpha_2}_b(X ,Y)}^{\frac{1-\alpha_1}{\alpha_2-\alpha_1}}. $$

Estimates \eqref{interp1} in the cases $\alpha_1 < \alpha <1<\alpha_2$ and $\alpha_1  <1<  \alpha <\alpha_2$ are obtained using the above ones. Indeed, if $\alpha\in (\alpha_1, 1)$, for every $\varphi\in C^{\alpha_2}_b(X ,Y)$  we have
$$\begin{array}{lll}
\|\varphi\|_{C^\alpha_b(X,Y)} & \leq & 3\|\varphi\|_{C^{\alpha_1}_b(X,Y)} ^{\frac{1-\alpha }{1-\alpha_1}} \|\varphi\|_{C^{1}_b(X,Y)} ^{\frac{\alpha -\alpha_1}{1-\alpha_1}}
\\
\\
& \leq & 3 \|\varphi\|_{C^{\alpha_1}_b(X,Y)} ^{\frac{1-\alpha }{1-\alpha_1}}( 3 \|\varphi\|_{C^{\alpha}_b(X,Y)}^{ \frac{\alpha_2-1}{\alpha_2-\alpha}} 
 \|\varphi \|_{C^{\alpha_2}_b(X ,Y)}^{\frac{1-\alpha }{\alpha_2-\alpha}}) ^{\frac{\alpha -\alpha_1}{1-\alpha_1}}\end{array}$$
so that
$$\|\varphi\|_{C^\alpha_b(X,Y)} ^{\frac{(\alpha_2-\alpha_1)(1-\alpha )}{(\alpha_2-\alpha)(1-\alpha_1)}} \leq 3^{ 1+ \frac{\alpha -\alpha_1}{1-\alpha_1}} 
\|\varphi\|_{C^{\alpha_1}_b(X,Y)} ^{\frac{1-\alpha }{1-\alpha_1}}  \|\varphi \|_{C^{\alpha_2}_b(X ,Y)}^{\frac{(1-\alpha )(\alpha -\alpha_1)}{(\alpha_2-\alpha) (1-\alpha_1)}}$$
which yields \eqref{interp1} in this case. The case $\alpha_1  <1<  \alpha <\alpha_2$ is similar.

Notice that taking $Y=\R$ and $\varphi =\psi$, this proves the statement for $0\leq \alpha_1 < \alpha <\alpha_2 <\alpha_1+1$.

\vspace{3mm}

\noindent{\em Step 2.} Let $0\leq \alpha_1 <\alpha<\alpha_2\leq \alpha_1 +1$. We may assume $\alpha_1\geq 1$, since the case $\alpha_1<1$ has already been considered in Step 1. We set $k := [\alpha_1]$ (the integral part of $\alpha_1$) and \[\widetilde{\alpha}_1 := \alpha_1-k,\ \ \ \widetilde{\alpha}:= \alpha -k,\ \ \ \widetilde{\alpha}_2 := \alpha_2-k.\] For every $\psi\in C^{\alpha_2}_b(X)$ we apply \eqref{interp1} to the function $\varphi:= D^k\psi$, with $Y= \mathcal L^k(X)$, obtaining
$$\|D^k \psi\|_{C^{\widetilde{\alpha}}(X,  \mathcal L^k(X))} \leq C \|D^k \psi\|_{C^{\widetilde{\alpha}_1}(X,  \mathcal L^k(X))} ^{\frac{\widetilde{\alpha}_2 -\widetilde{\alpha}}{\widetilde{\alpha}_2-\widetilde{\alpha}_1}}
\|D^k \psi\|_{C^{\widetilde{\alpha}_2}(X,  \mathcal L^k(X))} ^{\frac{\widetilde{\alpha} -\widetilde{\alpha}_1}{\widetilde{\alpha}_2-\widetilde{\alpha}_1}}
\leq C\|\psi\|_{C^{\alpha_1}_b(X)}^{\frac{\alpha_2 -\alpha}{\alpha_2-\alpha_1}} \|\psi\|_{C^{\alpha_2}_b(X)} ^{\frac{\alpha -\alpha_1}{\alpha_2-\alpha_1}}$$
 so that, adding $\|\psi\|_{C^{k-1}_b(X)}$ to both sides, we get 
$$\|  \psi \|_{C^{\alpha }_b (X)} \leq (C+1) \|\psi\|_{C^{\alpha_1}_b(X)}^{\frac{\alpha_2 -\alpha}{\alpha_2-\alpha_1}} \|\psi\|_{C^{\alpha_1}_b(X)} ^{\frac{\alpha -\alpha_1}{\alpha_2-\alpha_1}}. $$

\vspace{3mm}

\noindent{\em Step 3.} By Step 2 we know that \eqref{Jsigma} holds for  $\alpha_2-\alpha_1 <1$. We assume now that it holds for $\alpha_2-\alpha_1 <2^{k}$
 for some integer $k\geq 0$ and we prove that it holds for $\alpha_2-\alpha_1 <2^{k+1}$. 
 
First we take $\alpha = (\alpha_1 + \alpha_2)/2$, so that $\alpha - \alpha_1 <2^k$, and $\alpha_2 - \alpha<2^k$. We fix a small $\varepsilon \in (0, (\alpha_2-\alpha_1)/2)$ such that $  \alpha +\varepsilon - \alpha_1 = (\alpha_2-\alpha_1)/2 + \varepsilon <2^k$. By the recurrence assumption  there are $C_1$, $C_2$ such that 
$$\|\psi\|_{C^{\alpha }_b(X)} \leq C_1 \|\psi\|_{C^{\alpha_1 }_b(X)}^{\frac{\varepsilon}{\alpha -\alpha_1+ \varepsilon }} \|\psi\|_{C^{\alpha+ \varepsilon }_b(X)}^{\frac{\alpha-\alpha_1}{\alpha + \varepsilon -\alpha_1}}  , $$
and 
 $$ \|\psi\|_{C^{\alpha+ \varepsilon }_b(X)} \leq C_2 \|\psi\|_{C^{\alpha }_b(X)}^{\alpha_2-\frac{\alpha +\varepsilon}{\alpha_2-\alpha}}  \|\psi\|_{C^{\alpha_2}_b(X)}^{\frac{\varepsilon}{\alpha_2-\alpha}}, $$
 so that, taking into account that $\alpha_2-\alpha = \alpha-\alpha_1 = (\alpha_2-\alpha_1)/2$, 
 $$\|\psi\|_{C^{\alpha }_b(X)}^2 \leq C_1C_2^{\frac{\alpha_2-\alpha_1}{\alpha_2-\alpha_1 + 2\varepsilon} } \|\psi\|_{C^{\alpha_1 }_b(X)} \|\psi\|_{C^{\alpha_2}_b(X)}, $$
 which yields
 \begin{equation}
 \label{interp5}
 \|\psi\|_{C^{(\alpha_2-\alpha_1)/2}_b(X)} \leq C  \|\psi\|_{C^{\alpha_1 }_b(X)}^{1/2} \|\psi\|_{C^{\alpha_2}_b(X)}^{1/2}. 
 \end{equation}
The estimates for $\alpha <  (\alpha_2+\alpha_1)/2$ and for  $\alpha > (\alpha_2+\alpha_1)/2$ follow from this one and from the recurrence assumption. Indeed, 
for $\alpha <  (\alpha_2+\alpha_1)/2$ we have (since $ (\alpha_2+\alpha_1)/2 - \alpha_1 <2^k$)
$$\|\psi\|_{C^{\alpha }_b(X)} \leq C \|\psi\|_{C^{\alpha_1 }_b(X)}^{\frac{2}{\alpha_2 - \alpha_1}(\frac{\alpha_2 +\alpha_1}{2}-\alpha)} \|\psi\|_{C^{ (\alpha_2+\alpha_1)/2}_b(X)}^{\frac{2(\alpha - \alpha_1)}{\alpha_2 - \alpha_1}}, $$
and using  \eqref{interp5} we get 
$$\|\psi\|_{C^{\alpha }_b(X)} \leq C \|\psi\|_{C^{\alpha_1 }_b(X)}^{\frac{\alpha_2-\alpha}{\alpha_2 - \alpha_1}}\|\psi\|_{C^{ \frac{\alpha_2+\alpha_1}{2}}_b(X)}^{\frac{\alpha-\alpha_1}{\alpha_2 - \alpha_1}} , $$
with a different $C$. For $\alpha > (\alpha_2+\alpha_1)/2$ the procedure is similar: we estimate $\|\psi\|_{C^{\alpha }_b(X)}$ in terms of $ \|\psi\|_{C^{ (\alpha_2+\alpha_1)/2}_b(X)}$ and $ \|\psi\|_{C^{\alpha_1 }_b(X)}$, and then we use \eqref{interp5}. 
 So,  \eqref{Jsigma} holds for $\alpha_2-\alpha_1<2^{k+1}$. 
\end{proof}

%%%%%%%%%%%%%%%%%%%%%%%%%%%
\section{Schauder type theorems}
\label{Sect:Schauder}
%%%%%%%%%%%%%%%%%%%%%%%%%%%%

Throughout this section we assume that in addition to Hypotheses \ref{H1} and \ref{H2}, also Hypothesis \ref{Hyp:Lambda} holds.
Under these conditions,  we get power-like blow up of the norm of 
$P_{s,t}$, stated in the next corollary.

\begin{Corollary}
\label{Cor:stimeP_st}
For every $n\in \N$ there exists $K_n >0$ such that 
\begin{equation}
\label{nth-derivatives}
\|D^nP_{s,t} \varphi (x)\|_{\mathcal L^n(X)} \leq \frac{K_n}{(t-s)^{n\theta}}\|\varphi\|_{\infty}, \quad 0\leq s<t\leq T, \; \varphi\in B_b(X). 
\end{equation}
For every $\alpha\in (0, 1)$ and $n\in \N$ there exists $K_{n,\alpha}>0$ such that 
\begin{equation}
\label{nth-derivativesHolder}
\|D^nP_{s,t} \varphi (x)\|_{\mathcal L^n(X)} \leq \frac{K_{n,\alpha}}{(t-s)^{(n-\alpha)\theta }}\|\varphi\|_{C^\alpha_b(X)}, \quad 0\leq s<t\leq T, \; \varphi\in C^\alpha_b(X). 
\end{equation}
\end{Corollary}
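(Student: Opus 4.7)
Both estimates are essentially consequences of results already established in Section 2, combined with the power-type bound \eqref{as40bis} of Hypothesis \ref{Hyp:Lambda}. The plan is to invoke the derivative estimates of Theorem \ref{Th:derivate} and Corollary \ref{Cor:1} and convert the $\|\Lambda(t,s)\|_{\mathcal{L}(X)}$-factors into explicit powers of $(t-s)^{-\theta}$, using the bound $t-s\leq T$ to absorb any nonsingular terms into the singular ones.

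For \eqref{nth-derivatives} I would simply insert Hypothesis \ref{Hyp:Lambda} into estimate \eqref{as13} of Theorem \ref{Th:derivate}. For $n\geq 2$ this yields
\[
\sup_{x\in X}\|D^n(P_{s,t}\varphi)(x)\|_{\mathcal{L}^n(X)}\leq C_n\,\|\varphi\|_\infty\,\|\Lambda(t,s)\|^n_{\mathcal L(X)}\leq C_n C^n (t-s)^{-n\theta}\|\varphi\|_\infty.
\]
The case $n=1$ is already contained in Step 1 of the proof of Theorem \ref{Th:derivate}: the representation \eqref{as9} and the Cameron--Martin estimate \eqref{stimaCM} with $p=1$ give $\|D(P_{s,t}\varphi)(x)\|_{X^\star}\leq c_1\|\varphi\|_\infty\|\Lambda(t,s)\|_{\mathcal L(X)}$, to which Hypothesis \ref{Hyp:Lambda} applies in the same way. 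Taking $K_n:=C_n C^n$ completes this part.

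For \eqref{nth-derivativesHolder} I would apply estimate \eqref{stima_generale} of Corollary \ref{Cor:1} with $\alpha_1=\alpha$ and $\alpha_2=n$, obtaining
\[
\|P_{s,t}\varphi\|_{C^n_b(X)}\leq C\bigl(\|\Lambda(t,s)\|^{n-\alpha}_{\mathcal L(X)}+1\bigr)\|\varphi\|_{C^\alpha_b(X)},
\]
and then substitute \eqref{as40bis}. Since $t-s\leq T$ and $(n-\alpha)\theta>0$, we have $1\leq T^{(n-\alpha)\theta}(t-s)^{-(n-\alpha)\theta}$, so the additive constant can be absorbed into the singular factor, giving
\[
\sup_{x\in X}\|D^n(P_{s,t}\varphi)(x)\|_{\mathcal L^n(X)}\leq \|P_{s,t}\varphi\|_{C^n_b(X)}\leq K_{n,\alpha}(t-s)^{-(n-\alpha)\theta}\|\varphi\|_{C^\alpha_b(X)},
\]
for a constant $K_{n,\alpha}$ depending on $n$, $\alpha$, $\theta$, $T$, and on the constants appearing in Corollary \ref{Cor:1} and Hypothesis \ref{Hyp:Lambda}.

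There is no substantive obstacle here: every ingredient has already been prepared in Section 2, and the argument reduces to plugging \eqref{as40bis} into the two previously established smoothing bounds. The only mildly delicate point, which is routine, is the absorption of the additive $+1$ in \eqref{stima_generale} by the singular factor, which succeeds precisely because the time horizon $T$ is finite and $(n-\alpha)\theta>0$.
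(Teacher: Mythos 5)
Your proposal is correct and follows the same route as the paper, which simply observes that \eqref{nth-derivatives} and \eqref{nth-derivativesHolder} follow immediately from Theorem \ref{Th:derivate} and estimate \eqref{stima_generale} once Hypothesis \ref{Hyp:Lambda} is substituted for $\|\Lambda(t,s)\|_{\mathcal L(X)}$. Your extra care with the case $n=1$ (via \eqref{as9} and \eqref{stimaCM}) and with absorbing the additive constant in \eqref{stima_generale} using $t-s\leq T$ just makes explicit what the paper leaves implicit.
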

\begin{proof}
Estimates \eqref{nth-derivatives} and \eqref{nth-derivativesHolder} follow immediately from Theorem \ref{Th:derivate} and \eqref{stima_generale}. 
\end{proof}

Let now $t\in [0,T]$ and $\psi\in C_b([0,t]\times X)$. We split the function $u$ defined in \eqref{u} into $u(s,x) = u_0(s,x) + u_1(s,x) $, where 
\begin{equation}
\label{v0}
u_0(s,x):= P_{s,t}\varphi (x) , \quad u_1(s,x) := -\int_s^t P_{s, \sigma} \psi(\sigma, \cdot)(x) d\sigma , \quad 0\leq s \leq t, \; x\in X. 
\end{equation}

 \begin{Proposition}
 \label{Pr:nonoptimal_par}
For every $t\in [0,T]$ and $\psi \in C_b([0,t]\times X)$ the function $u_1$ defined in \eqref{v0}
is continuous, and we have 
\begin{equation}
\label{stimasup_par} 
\|u_1\|_{\infty}  \leq t \|\psi\|_{\infty}. 
\end{equation}
Moreover the following statements hold. 
 \begin{itemize}
 \item[(i)] Let $\theta <1$. For every $n\in \N$ such that $n<1/\theta$, the function $u_1$ belongs to $C^{0,n}_b([0, t]\times X)$, there exists $C >0$, independent of $\psi $ and $t$, such that 
 \begin{equation}
 \label{C^n0_par}
 \|u_1\|_{C^{0,n}_b([0, t]\times X)}\leq C\|\psi \|_{\infty}.
 \end{equation}
  \item[(ii)] Let $\alpha\in (0,1)$ be such that $\alpha + 1/\theta >1$. For every $\psi \in C^{\alpha}_b(X)$ and for every $n\in \N$ such that $n<\alpha +1/\theta$, the function  $u_1$ belongs to $ C^{0,n}_b([0,t]\times X)$, and  there exists $C >0$, independent of $\psi $ and $t$, such that 
 \begin{equation}
 \label{C^nalpha_par}
 \|u_1\|_{C^{0,n}_b([0,t]\times X)}\leq C\|\psi \|_{C^{0,\alpha}_b([0,t]\times X)}
 \end{equation}
\end{itemize}
\end{Proposition}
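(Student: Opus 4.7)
The plan is to derive everything from the pointwise bounds for $D^k_xP_{s,\sigma}$ given in Corollary \ref{Cor:stimeP_st}, combined with differentiation under the integral sign. The sup estimate \eqref{stimasup_par} is immediate from the fact that $\mathcal N_{m^x(\sigma,s),Q(\sigma,s)}$ is a probability measure, so $|P_{s,\sigma}\psi(\sigma,\cdot)(x)|\le\|\psi\|_\infty$ and integration in $\sigma\in[s,t]$ yields $|u_1(s,x)|\le(t-s)\|\psi\|_\infty$. For the continuity of $u_1$ I would fix $(s_0,x_0)\in[0,t]\times X$ and, assuming $s_0\le s$ for definiteness, split
$$u_1(s,x)-u_1(s_0,x_0)=-\int_{s_0}^{s}P_{s_0,\sigma}\psi(\sigma,\cdot)(x_0)\,d\sigma - \int_{s}^{t}\bigl[P_{s,\sigma}\psi(\sigma,\cdot)(x)-P_{s_0,\sigma}\psi(\sigma,\cdot)(x_0)\bigr]\,d\sigma.$$
The first term is $O(|s-s_0|\|\psi\|_\infty)$, and the second tends to $0$ by dominated convergence, since the integrand is bounded by $2\|\psi\|_\infty$ and converges pointwise in $\sigma$ by Lemma \ref{Le:continuity} applied with $\varphi=\psi(\sigma,\cdot)\in C_b(X)$.

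For part (i), I fix $n\in\mathbb N$ with $n\theta<1$. By \eqref{nth-derivatives},
$$\|D^k_x P_{s,\sigma}\psi(\sigma,\cdot)(x)\|_{\mathcal L^k(X)}\le\frac{K_k\,\|\psi\|_\infty}{(\sigma-s)^{k\theta}},\qquad k=1,\dots,n,$$
and the right-hand side is integrable in $\sigma\in(s,t]$. A standard difference-quotient plus dominated convergence argument then permits differentiation under the integral sign $n$ times, giving
$$D^n_xu_1(s,x)(h_1,\dots,h_n)=-\int_s^tD^n_x\bigl(P_{s,\sigma}\psi(\sigma,\cdot)\bigr)(x)(h_1,\dots,h_n)\,d\sigma$$
and, for each $k\le n$, the bound $\|D^k_xu_1(s,\cdot)\|_\infty\le K_kT^{1-k\theta}\|\psi\|_\infty/(1-k\theta)$; combined with $\|u_1\|_\infty\le T\|\psi\|_\infty$, this yields \eqref{C^n0_par} with a constant independent of $t$ and $\psi$. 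Joint continuity of $(s,x)\mapsto D^k_xu_1(s,x)(h_1,\dots,h_k)$ on $[0,t]\times X$ for each fixed tuple then follows from the continuity statement in Corollary \ref{Cor:1} at each fixed $\sigma>s\vee s_0$, together with dominated convergence applied to the same splitting as above; the contribution from the moving lower limit is controlled by the integrability of $(\sigma-s)^{-k\theta}$.

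Part (ii) is entirely analogous, with \eqref{nth-derivatives} replaced by the Hölder-scale bound \eqref{nth-derivativesHolder},
$$\|D^k_x P_{s,\sigma}\psi(\sigma,\cdot)(x)\|_{\mathcal L^k(X)}\le\frac{K_{k,\alpha}\,\|\psi\|_{C^{0,\alpha}_b([0,t]\times X)}}{(\sigma-s)^{(k-\alpha)\theta}},\qquad k=1,\dots,n,$$
which is integrable in $\sigma\in(s,t]$ since the hypothesis $n<\alpha+1/\theta$ guarantees $(n-\alpha)\theta<1$. The same differentiation-under-the-integral argument then produces \eqref{C^nalpha_par}. The main technical obstacle in both parts is the joint continuity in $(s,x)$ of the derivatives, because $s$ is simultaneously the lower limit of integration and the location of the singularity of the integrand; however, this is precisely what the integrability conditions $n\theta<1$ and $(n-\alpha)\theta<1$ were designed to handle.
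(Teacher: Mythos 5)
Your proposal is correct and follows essentially the same route as the paper: the sup bound from the probability-measure property, continuity via the same splitting of the $\sigma$-integral at the moving lower limit combined with dominated convergence and Lemma \ref{Le:continuity}, and the derivative estimates by differentiating under the integral sign using the integrable singularities $(\sigma-s)^{-k\theta}$ and $(\sigma-s)^{-(k-\alpha)\theta}$ from Corollary \ref{Cor:stimeP_st}. No substantive differences to report.
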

\begin{proof}
Fix  $s_0\in [0,t]$ and  $x$, $x_0\in X$. If $ s_0\leq s\leq t $ we have 
$$\begin{array}{lll}
|u_1(s,x) - u_1(s_0, x_0) | & \leq & \ds \int_{s}^{t} |P_{s,\sigma }\psi (\sigma, \cdot)(x) - P_{s_0, \sigma }\psi (\sigma, \cdot)(x_0)|\,d\sigma  + \int_{s_0}^s  |P_{s_0,\sigma }\psi (\sigma, \cdot)(x)|\,d\sigma 
\\
\\
& = & \ds \int_{s_0}^{t}\one_{ [s,t] }(\sigma) |P_{s,\sigma }\psi (\sigma, \cdot)(x) - P_{s_0, \sigma }\psi (\sigma, \cdot)(x_0)|\,d\sigma
\\
\\  & &\ds + \int_{s_0}^s  |P_{s_0,\sigma }\psi (\sigma, \cdot)(x)|\,d\sigma. \end{array}$$
Since for every $\sigma \geq s_0$ the function \[(s,x) \in\,[0,t]\times X\mapsto \one_{ [s,t] }(\sigma) P_{s, \sigma}\psi (\sigma , \cdot)(x)\] is continuous, and 
\[|P_{s,\sigma }\psi (\sigma, \cdot)(x) - P_{s_0, \sigma }\psi (\sigma, \cdot)(x_0)|\leq 2\|\psi \|_{\infty}\,\] by the Dominated Convergence Theorem the first integral vanishes as $s \downarrow s_0$, and $x\to x_0$. The second integral is bounded by $(s-s_0)\|\psi \|_{\infty}$, so that it vanishes too, as $s\downarrow s_0$, and $x\to x_0$. If $s<s_0$ we split 
\[u_1(s,x) - u_1(s_0, x_0)= 
\int_{s_0}^{  t} (P_{s,\sigma }\psi (\sigma, \cdot)(x) - P_{s_0, \sigma }\psi (\sigma, \cdot)(x_0))\,ds + \int_{s}^{s_0}  P_{s, \sigma}\psi (\sigma, \cdot)(x)\,d\sigma,\] and we argue in the same way. So, $u_1$ is continuous. Estimate  \eqref{stimasup_par} is immediate. 

Concerning statements (i) and (ii), the proof of the fact that $u_1(s, \cdot)\in C^n_b(X)$, for every $s\in [0, t]$, and for every $k\in \{1, \ldots, n\},$
\begin{equation}
\label{derivatak-esima}
\frac{\partial^ku_1}{\partial h_1 \ldots  \partial h_k}(t,  x)= \int_{0}^t D^kP_{s,\sigma}\psi (\sigma, \cdot)(x)(h_1, \ldots, h_k) \,d\sigma , \quad  s\in [0, t], \;x\in X, 
\end{equation}
is a consequence of Corollary \ref{Cor:stimeP_st}. Actually, by taking into account estimates \eqref{nth-derivatives} and \eqref{nth-derivativesHolder} respectively,  estimates \eqref{C^n0_par} and 
\eqref{C^nalpha_par} follow as well. 
The proof that the mapping
\[(s,x) \in\,[0, t)\times X\mapsto D^k u_1(s, \cdot)(x)(h_1, \ldots, h_k)\in\,\mathbb{R},\]  is continuous, for every $ k\in \{1, \ldots, n\}$, and $h_1, \ldots h_k\in X$,  is similar to the proof of the continuity of $u_1$. Indeed, 
for $s_0 < s \leq t$ and  $x$, $x_0\in X$ we split \[\frac{\partial^ku_1}{\partial h_1 \ldots\partial h_k}(s,  x) - \frac{\partial^ku_1}{\partial h_1 \ldots \partial h_k}(s_0,  x_0)= I_1 + I_2,\] where 
$$I_1 =  \int_{s_0}^{t}\one_{[s,t]} (\sigma) (D^k P_{s,\sigma}\psi (\sigma, \cdot)(x)- D^kP_{s_0,\sigma}\psi(\sigma, \cdot)(x_0))(h_1, \ldots, h_k) \,d\sigma , $$
and
$$I_2 =\int_{s_0}^t D^k P_{s, \sigma}\psi (\sigma, \cdot)(x)(h_1, \ldots, h_k) \,d\sigma,  $$
and from now on we proceed as before.  
\end{proof}

Now we have all the tools to prove our Schauder type theorems, arguing as in the paper \cite{LR} that deals with  autonomous problems. More precisely, it is sufficient to rewrite in our situation the proofs of Theorems 3.12 and 3.13 of \cite{LR} in  the strong-Feller case ($H=X$ according to the notation of \cite{LR}). 
For this reason we will write full details of the proofs only for some values of the parameters $\alpha$ and $\theta$.

 \begin{Theorem}
 \label{Th:Schauder_par}
Under Hypotheses \ref{H1}, \ref{H2}, and \ref{Hyp:Lambda} hold, let  $\varphi \in C_b(X)$, and  $\psi \in C_b([0,t]\times X)$ and let $u$ be defined by \eqref{u}. The following statements hold. 
  \begin{itemize}
 \item[(i)] If $1/\theta \notin \N$ and if $\varphi \in C^{1/\theta}_b(X)$, and $\psi \in C_b([0,t]\times X)$,  then $u\in C^{0, 1/\theta}_b ([0,t]\times X)$. Moreover, there exists $C= C(T)>0$, independent of $\varphi $ and $\psi $, such that 
 \begin{equation}
 \label{maggSchauder_par0}
 \|u\|_{ C^{0, 1/\theta}_b([0,t]\times X)} \leq C ( \|\varphi \|_{C^{1/\theta}_b(X)} + \|\psi \|_{\infty}). 
 \end{equation}
\item[(ii)] If $\alpha\in (0,1)$ and  $\alpha + 1/\theta \notin \N$, and if $\varphi \in C^{\alpha + 1/\theta}_b(X)$ and $\psi \in C^{0, \alpha}_b ([0,t]\times X)$,
   then $u\in C^{0,\alpha + 1/\theta}_b ([0,t]\times X)$. Moreover, there exists $C= C(T, \alpha)>0$, independent of $\varphi $ and $\psi $, such that 
 \begin{equation}
 \label{maggSchauder_par}
 \|u\|_{ C^{0,\alpha + 1/\theta}_b ([0,t]\times X)} \leq C  ( \|\varphi \|_{ C^{\alpha + 1/\theta}_b(X)} + \|\psi \|_{C^{0,\alpha  }_b ([0,t]\times X)} ). 
 \end{equation}
\end{itemize}
\end{Theorem}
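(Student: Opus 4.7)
Decompose $u = u_0 + u_1$ as in \eqref{v0}. The regularity of $u_0(s,x)= P_{s,t}\varphi(x)$ is provided directly by Corollary \ref{reg_omogenea}: under the assumption $\varphi \in C^{1/\theta}_b(X)$ (resp.\ $\varphi \in C^{\alpha+1/\theta}_b(X)$) with $1/\theta \notin \N$ (resp.\ $\alpha + 1/\theta \notin \N$), one has $u_0 \in C^{0,1/\theta}_b([0,t]\times X)$ (resp.\ $C^{0,\alpha+1/\theta}_b([0,t]\times X)$) with the required norm bound. The heart of the matter is therefore to establish the same regularity for $u_1$, uniformly in $s$, starting from the smoothing estimates of Corollary \ref{Cor:stimeP_st}.

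Set $k := \lfloor \alpha + 1/\theta \rfloor$ (with $\alpha=0$ in case (i)) and $\gamma := \alpha + 1/\theta - k \in (0,1)$. Proposition \ref{Pr:nonoptimal_par} already yields $u_1 \in C^{0,k}_b([0,t]\times X)$ together with the representation
\[
D^k u_1(s,x)(h_1,\dots,h_k) = -\int_s^t D^k P_{s,\sigma}\psi(\sigma,\cdot)(x)(h_1,\dots,h_k)\, d\sigma,
\]
and controls the $C^{0,k}_b$ norm by $\|\psi\|_{C^{0,\alpha}_b([0,t]\times X)}$. What remains is to show that, uniformly in $s\in[0,t]$, the $k$-th spatial derivative $D^k u_1(s,\cdot)$ lies in $C^\gamma(X,\mathcal L^k(X))$ with the claimed bound. (When $k=0$, the argument below produces the Hölder estimate for $u_1$ itself directly.)

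The key estimate comes from a standard splitting. Fix $x, h\in X$ with $h\neq 0$ and $h_1, \dots, h_k \in X$, and set $\delta := \|h\|_X^{1/\theta} \wedge (t-s)$. On the interval $(s, s+\delta)$, we bound
\[
\bigl|D^k P_{s,\sigma}\psi(\sigma,\cdot)(x+h) - D^k P_{s,\sigma}\psi(\sigma,\cdot)(x)\bigr|
\]
by twice $\|D^k P_{s,\sigma}\psi(\sigma,\cdot)\|_\infty$ and apply \eqref{nth-derivativesHolder} with exponent $(k-\alpha)\theta < 1$, integrating to obtain a contribution of order $\delta^{1-(k-\alpha)\theta} \|\psi\|_{C^{0,\alpha}_b}\prod\|h_i\|_X$. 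On $(s+\delta, t)$, we apply the mean value theorem in the $x$-variable, using the $(k+1)$-th derivative bound from \eqref{nth-derivativesHolder} with exponent $(k+1-\alpha)\theta > 1$ (guaranteed by $k+1>\alpha+1/\theta$), to obtain $\|h\|_X\,\delta^{1-(k+1-\alpha)\theta}\|\psi\|_{C^{0,\alpha}_b}\prod\|h_i\|_X$ up to a universal constant. A direct calculation shows that with $\delta = \|h\|_X^{1/\theta}$ both contributions are precisely of order $\|h\|_X^\gamma$, giving the desired Hölder seminorm bound on $D^k u_1(s,\cdot)$.

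\textbf{Main obstacle.} The delicate point is the precise matching of exponents in the splitting, and verifying that the constants are uniform in $s$, $t$, $\varphi$, $\psi$; this relies crucially on the sharp exponent $\theta$ in Hypothesis \ref{Hyp:Lambda} and on the ``Hölder gain'' \eqref{nth-derivativesHolder}, which transforms the $C^\alpha$ regularity of $\psi$ into an improvement of the blow-up rate at $\sigma = s$. A minor additional subtlety is the marginal case $\delta > t-s$, where we use only the direct control on the integrand; and in case (i) (where we only know $\psi\in C_b$) one replaces $\|\psi\|_{C^{0,\alpha}_b}$ by $\|\psi\|_\infty$ in both parts of the splitting, using \eqref{nth-derivatives} in place of \eqref{nth-derivativesHolder}. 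Combining the bound on $D^k u_1$ with the lower-order bounds from Proposition \ref{Pr:nonoptimal_par} and the regularity of $u_0$ from Corollary \ref{reg_omogenea} yields \eqref{maggSchauder_par0} and \eqref{maggSchauder_par}.
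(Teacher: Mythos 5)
Your proposal is correct and follows essentially the same route as the paper's proof: the same reduction to $u_1$ via Corollary \ref{reg_omogenea} and Proposition \ref{Pr:nonoptimal_par}, the same splitting of the integral at $\sigma = s + \|h\|_X^{1/\theta}$ (capped at $t$), with the sup-norm estimate on the near part and the $(k+1)$-th derivative bound on the far part, and the same exponent arithmetic yielding $\|h\|_X^{\gamma}$.
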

 \begin{proof}
It  is sufficient to prove that the statements hold in the case $\varphi \equiv 0$, namely in the case when $u$ coincides with the function  $ u_1$ defined in \eqref{v0}. Indeed, thanks to Corollary \ref{reg_omogenea}, for any non integer $\beta >0$ the function $(s, x)\mapsto P_{s,t}\varphi (x)$ belongs to $C_b^{0, \beta} ([0,t]\times X)$, if $\varphi\in C_b^{\beta}(X)$. 

Taking into account Proposition 
 \ref{Pr:nonoptimal_par}, we have to check that for every $s\in [0,t]$, $u_1(s, \cdot)\in C^{\alpha +1/\theta}_b(X)$  with $\alpha =0$ in the case of  statement (i) and $\alpha \in (0, 1)$ in the case of statement (ii), with H\"older norm bounded by a constant independent of $s$. 

Let  $n\in \N\cup \{0\}$ be the integer part of $\alpha +1/\theta$. Here  we treat only the case   $n>0$, leaving the (similar, and even easier) case $n=0$ to the reader. 

By Proposition \ref{Pr:nonoptimal_par}  we already know that $u_1\in C^{0,n}_b([0, t]\times X)$. 
So, we have to prove  that $D^n u_1(s, \cdot)$ is $\alpha + 1/\theta -n$ -H\"older continuous with values in 
$\mathcal L^n(X)$, with H\"older constant independent of $s\in [0,t]$. 
We split  every partial derivative  $D^n u_1(s, y)(h_1, \ldots, h_n)$ as $a_h(s, y) + b_h(s, y)$, where   
\begin{equation}
\label{a_par}
a_h(s, y) : =  - \int_s^{(s+ \|h\|^{1/\theta})\wedge t}   D^n P_{s, \sigma}\psi(\sigma, \cdot)(y)(h_1, \ldots, h_n) \,d\sigma, \quad s\in [0,t], \;y\in X, 
\end{equation}
and 
\begin{equation}
\label{b_par}
b_h(s, y) = - \int_{(s+ \|h\|^{1/\theta}) \wedge t }^{t}  D^n P_{s, \sigma}\psi(\sigma, \cdot) (y)(h_1, \ldots, h_n) \,d\sigma , \quad \quad s\in [0,t], \;y\in X. 
\end{equation}

Let  us consider statement (i), where $\alpha =0$. In  this case,  $\psi \in C_b([s,t]\times X)$, $n\theta \in (0, 1)$, $(n+1)\theta >1$. 
Estimate \eqref{nth-derivatives} yields  
$$\begin{array}{lll}
| a_h(s, x+h) - a_h(s, x) | & \leq & | a_h(s,  x+h)| + |a_h(s, x) | \leq  \ds 2 
K_{n }    \int_s^{(s+ \|h\|^{1/\theta})\wedge t} \sigma ^{-n\theta} d\sigma \|\psi\|_{\infty} \prod_{j=1}^n\|h_j\| 
\\
\\
& \leq  & \ds
\frac{2K_{n } }{ 1-n\theta} \|h\|^{(1-n\theta)/\theta}  \|\psi\|_{\infty}\prod_{j=1}^n\|h_j\| .
\end{array}$$

If $\|h\|^{1/\theta}\geq (t-s)$, $b_h(s, \cdot)$ vanishes. To estimate $| b_h(s, x+h) - b_h(s, x) | $ when $\|h\|^{1/\theta}<t-s$ we use again  \eqref{nth-derivatives} and for every $\sigma \in (s,t)$ we get
\begin{equation}
\label{eq:3}
\begin{array}{lll}
\|D^n P_{s,\sigma} \psi(\sigma, \cdot)(x+h) - D^n P_{s,\sigma} \psi(\sigma, \cdot)(x)\|_{\mathcal L^n(X)}&  \leq & \sup_{y\in X} \|D^{n+1}P_{s,\sigma} \psi(\sigma, \cdot)(y)\|_{\mathcal L^n(X)} \|h\|
\\
\\ &
\leq & \ds \frac{ K_{n+1 } }{  (\sigma -s)^{(n+1 )\theta}   } \|\psi\|_{\infty} \|h\| , 
\end{array}
\end{equation}
so that 
$$\begin{array}{lll}
| b_h(s,  x+h) - b_h(s, x) | & \leq &    \ds   \int_{s+ \|h\|^{1/\theta} }^{t}  \frac{K_{n+1 }}{(\sigma -s)^{(n+1 )\theta}} d\sigma \, \|\psi \|_{\infty}\|h\|
 \prod_{j=1}^n\|h_j\| 
\\
\\
& \leq  &\ds  \frac{K_{n+1 }}{ (n+1 )\theta -1} \|h\|^{ 1  /    \theta - n  } \| \psi \|_{\infty} \prod_{j=1}^n\|h_j\|  .
\end{array}$$
Summing up we get
$$| (D^n u_1(s, x +h) -  D^n u_1(s, x))(h_1, \ldots, h_n)| \leq C \|h\|^{1/\theta -n   } \| \psi \|_{\infty}\prod_{j=1}^n\|h_j\| , \quad 0\leq s\leq t\leq T,  $$
with 
$$C =      \frac{2K_{n } }{ 1-n\theta} +  \frac{K_{n+1 }}{ (n+1 )\theta -1} . $$
Therefore,  
$[D^n u_1(s, \cdot)]_{C^{1/\theta -n  }(X; \mathcal L^n(X))} \leq C \|\psi \|_{\infty}$ for every $s\in [0, t]$. This estimate and \eqref{stimasup_par}   give \eqref{maggSchauder_par0} for $n\geq 1$.

The proof of statement (ii) is similar. Now we have  $\psi \in C^{0, \alpha}([s,t]\times X)$, with  $\alpha \in (0, 1)$, $(n-\alpha)\theta \in (0, 1)$, and  $(n+1-\alpha)\theta >1$.  
Estimate \eqref{nth-derivativesHolder} yields
$$\begin{array}{l}
| a_h(s, x+h) - a_h(s, x) |   \leq    \ds 2 
K_{n, \alpha}  \int_s^{(s+ \|h\|^{1/\theta})\wedge t}    \frac{1}{(\sigma -s)^{(n-\alpha)\theta}}\|\psi (\sigma, \cdot)\Vert_{C^\alpha_b(X)}d\sigma \prod_{j=1}^n\|h_j\|
\\
\\
  =   \ds    \frac{2K_{n, \alpha} }{ 1-(n-\alpha)\theta} \|h\|^{(1-(n-\alpha)\theta)/\theta} \prod_{j=1}^n\|h_j\| \sup_{s\leq r \leq t}\|\psi (r, \cdot)\|_{C^\alpha_b(X)}. 
\end{array}$$
Again, if $\|h\|^{1/\theta}\geq (t-s)$, $b_h(s, \cdot)$ vanishes. To estimate $| b_h(s, x+h) - b_h(s, x) | $ when $\|h\|^{1/\theta}<t-s$ we use now  \eqref{nth-derivativesHolder} that gives, for every $\sigma \in (s,t)$, 
\begin{equation}
\label{eq:4}
\begin{array}{lll}
\|D^n P_{s,\sigma} \psi(\sigma, \cdot)(x+h) - D^n P_{s,\sigma} \psi(\sigma, \cdot)(x)\|_{\mathcal L^n(X)} & \leq & \sup_{y\in X} \|D^{n+1}P_{s,\sigma} \psi(\sigma, \cdot)(y)\|_{\mathcal L^n(X)} \|h\|
\\
\\
&
\leq & \ds \frac{ K_{n+1 ,\alpha } }{  (\sigma -s)^{(n+1 -\alpha)\theta}   } \|\psi\|_{\infty} \|h\| .
\end{array}
\end{equation}
Therefore, 
$$\begin{array}{l}
| b_h(s, x+h) - b_h(s,  x) |   \leq     \ds  \int_{s+ \|h\|^{1/\theta} }^{t}   \frac{K_{n+1, \alpha } }{(\sigma -s)^{(n+1-\alpha)\theta}} \|\psi(\sigma,  \cdot)\|_{C^\alpha_b(X)}d\sigma \,\|h\|
 \prod_{j=1}^n\|h_j\|
\\
\\
  \leq    \ds \frac{K_{n+1 }}{ (n+1 -\alpha)\theta -1} \|h\|^{ 1/ \theta - n  +\alpha } \prod_{j=1}^n\|h_j\|  \sup_{s\leq r\leq t}\|\psi (r, \cdot)]_{C^\alpha_b(X)}. 
\end{array}$$
Summing up we get
$$| (D^n u_1(s, x +h) -  D^nu_1(s, x))(h_1, \ldots, h_n)| \leq C_1 \|h\|^{1/\theta -n +\alpha } \prod_{j=1}^n\|h_j\| \sup_{s\leq r\leq t}\|\psi (r, \cdot)\|_{C^\alpha_b(X)}$$
with 
$$C_1 =   \frac{2K_{n, \alpha} }{ 1-(n-\alpha)\theta} +  \frac{K_{n+1, \alpha}}{ (n+1-\alpha)\theta -1}     . $$
Therefore,  $[D^nu_1(s, \cdot)]_{C^{1/\theta -n  +\alpha}_b(X; \mathcal L^n(X))} \leq C_1 \|\psi\|_{C^{0,\alpha}_b([0,t]\times X)}$  for every $s\in [0,t]$. This estimate and \eqref{stimasup_par}   give \eqref{maggSchauder_par} in the case $n\geq 1$. 
  \end{proof}

\begin{Theorem}
\label{Zygmund_par}
Under Hypotheses  \ref{H1}, \ref{H2}, and \ref{Hyp:Lambda} hold, let  $\varphi \in C_b(X)$, and $\psi \in C_b([0,t]\times X)$ and let $u$ be defined by \eqref{u}. The following statements hold. 
  \begin{itemize}
 \item[(i)] If $1/\theta = k\in \N$ and $\varphi \in Z^{k}_b(X)$, then $u \in Z^{0,k}_b([0,t]\times X)$ and there exists $C= C(T)>0$, independent of $\varphi $ and $\psi $, such that 
 \begin{equation}
 \label{maggZygmund_par0}
 \|v\|_{ Z^{0,k}_b([0,t]\times X)} \leq C ( \|\varphi \|_{Z^{k}_b(X)} + \|\psi \|_{\infty}). 
 \end{equation}
  \item[(ii)] 
If  $\alpha\in (0,1)$, and  $\alpha + 1/\theta = k \in \N$, and if  $\varphi \in Z^{k}_b(X)$ and $\psi \in  C^{0,\alpha}_b([0,t]\times X)$, then $u \in Z^{0,k}_b([0,t]\times X)$, and there exists $C= C(T, \alpha)>0$, independent of $\varphi $ and $\psi$, such that 
 \begin{equation}
 \label{maggZygmund_par}
 \|u\|_{ Z^{0,k}_b([0,t]\times X)}   \leq C  ( \|f\|_{Z^{k}_b(X)} + \|\psi\|_{C^{0,\alpha}_b([0,t]\times X)} ). 
 \end{equation}
\end{itemize}
 \end{Theorem}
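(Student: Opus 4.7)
The proof will parallel that of Theorem \ref{Th:Schauder_par}, with second order (Zygmund) finite differences replacing the H\"older first differences. Splitting $u = u_0 + u_1$ as in \eqref{v0}, the contribution of $u_0(s,x) = P_{s,t}\varphi(x)$ is already handled by Corollary \ref{reg_omogenea}: since $\varphi \in Z^k_b(X)$, one has $u_0 \in Z^{0,k}_b([0,t]\times X)$ with $\|u_0\|_{Z^{0,k}_b([0,t]\times X)} \leq C\|\varphi\|_{Z^k_b(X)}$. So it suffices to prove both statements under $\varphi \equiv 0$, i.e., for $u = u_1$.

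By Proposition \ref{Pr:nonoptimal_par} applied with $n = k-1$ (which is admissible since $k-1 < \alpha + 1/\theta = k$ in both cases, with $\alpha = 0$ in (i) and $\alpha \in (0,1)$ in (ii)), the function $u_1(s,\cdot)$ belongs to $C^{k-1}_b(X)$ for every $s\in[0,t]$, with partial derivatives given by \eqref{derivatak-esima}. Thus the task reduces to showing that for each fixed $h\in X\setminus\{0\}$ and $h_1,\ldots,h_{k-1}\in X$, the second difference
$$\Delta_h^2 \bigl(D^{k-1}u_1(s,\cdot)(h_1,\ldots,h_{k-1})\bigr)(y) := \tfrac{\partial^{k-1}u_1}{\partial h_1\cdots\partial h_{k-1}}(s,y+2h) - 2\tfrac{\partial^{k-1}u_1}{\partial h_1\cdots\partial h_{k-1}}(s,y+h) + \tfrac{\partial^{k-1}u_1}{\partial h_1\cdots\partial h_{k-1}}(s,y)$$
is bounded in absolute value by $C\|h\|\prod_j\|h_j\|$ times the appropriate norm of $\psi$, uniformly in $s$ and $y$. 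I follow the splitting of \eqref{a_par}--\eqref{b_par} with $n$ replaced by $k-1$, decomposing $D^{k-1}u_1(s,y)(h_1,\ldots,h_{k-1}) = a_h(s,y)+b_h(s,y)$ by integrating respectively over $[s,(s+\|h\|^{1/\theta})\wedge t]$ and $[(s+\|h\|^{1/\theta})\wedge t,t]$.

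The second difference of $a_h$ is estimated crudely by $4\sup|a_h|$, which via \eqref{nth-derivatives} in case (i), or \eqref{nth-derivativesHolder} in case (ii), gives an integral $\int_s^{s+\|h\|^{1/\theta}}(\sigma - s)^{-(k-1-\alpha)\theta}d\sigma$. Since $\alpha + 1/\theta = k$ yields $(k-1-\alpha)\theta = 1-\theta < 1$, this integral evaluates to a constant times $\|h\|$, which is exactly the Zygmund scaling. For $b_h$ (vanishing when $\|h\|^{1/\theta}\geq t-s$), the crucial step is a second order Taylor expansion: the second difference of $D^{k-1}P_{s,\sigma}\psi(\sigma,\cdot)$ at the three points $y,y+h,y+2h$ is controlled by $\sup_z\|D^{k+1}P_{s,\sigma}\psi(\sigma,\cdot)(z)\|_{\mathcal L^{k+1}(X)}\,\|h\|^2$. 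Combined with \eqref{nth-derivatives} or \eqref{nth-derivativesHolder}, integration over $[s+\|h\|^{1/\theta},t]$ of $(\sigma-s)^{-(k+1-\alpha)\theta}$ is convergent at the lower endpoint (the exponent $(k+1-\alpha)\theta = 1+\theta$ exceeds $1$) and contributes $\|h\|^{-1}$, so $b_h$ produces the required $\|h\|^2\cdot\|h\|^{-1} = \|h\|$ bound.

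The main obstacle, beyond the bookkeeping with constants, is ensuring that the second-order Taylor refinement on the long time interval $[s+\|h\|^{1/\theta},t]$ provides exactly the cancellation needed at the critical integer exponent $\alpha + 1/\theta = k$: the first-difference estimate $\|D^k P_{s,\sigma}\psi\|\cdot\|h\|$ used in the H\"older case would give a logarithmically divergent integral, and it is only the second-order bound involving $D^{k+1}$ that closes the argument. Once the two pieces are combined and summed with the contribution of $u_0$, \eqref{maggZygmund_par0} and \eqref{maggZygmund_par} follow, and the continuity of $u_1$ and of its derivatives of order up to $k-1$ in $(s,x)$ is obtained exactly as in the last part of the proof of Proposition \ref{Pr:nonoptimal_par}.
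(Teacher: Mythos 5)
Your proposal is correct and takes essentially the same route as the paper's proof: the same reduction to $\varphi\equiv 0$ via Corollary \ref{reg_omogenea}, the same splitting of $D^{k-1}u_1$ into $a_h+b_h$ at time $s+\|h\|^{1/\theta}$, the crude fourfold supremum bound on the short interval, and the second-order difference bound \eqref{c^2} involving $D^{k+1}P_{s,\sigma}$ on the long interval, with exactly the exponents $(k-1-\alpha)\theta=1-\theta<1$ and $(k+1-\alpha)\theta=1+\theta>1$ that make the two integrals scale like $\|h\|$. The only detail you pass over is the degenerate case $k=\theta=1$, which the paper likewise leaves to the reader.
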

 \begin{proof}
 We know by Corollary \ref{reg_omogenea}  that for every $\varphi \in Z^{k}_H(X)$ the function $(s,x) \mapsto P_{s,t}\varphi (x)$ belongs to $Z^{0,k}([0,t]\times X)$, and estimate \eqref{Zygmund-Zygmund} holds. So it is enough to prove that the statements hold for $\varphi \equiv 0$, in which case $u$ coincides with the function $u_1$ defined by \eqref{v0}.

In fact we give a proof only in the case $k>1$, leaving the (easier) case $k=\theta=1$ to the reader. 

We recall that $k= 1/\theta$ in statement (i) and $k= \alpha + 1/\theta$ in statement (ii). In both cases, Proposition \ref{Pr:nonoptimal_par} yields  $u_1\in C^{0, k-1}([0,t]\times X)$. 
We have to prove   that $[D^{k-1}u_1(t, \cdot)]_{Z^1(X, \mathcal L^{k-1}(X))}$ is bounded by a constant independent of $t$. To this aim, fixed any $h, h_1, \ldots, h_{k-1}\in X$, for every $s\in [0,t]$ and $y\in X$  we split 
 $D^{k -1}u_1(s, y)(h_1, \ldots, h_{k-1})$ as $a_h(s,  y) + b_h(s, y)$, where $a_h$ is defined in \eqref{a_par} and $b_h$ is defined in \eqref{b_par}, both with $n=k-1$. 

So, we have 
 $$\begin{array}{l}
| a_h(s, x+2h) - 2 a_h(s,  x+h) + a_h(s, x)|
\\
\\
\ds \leq   \int_s^{(s+ \|h\|^{1/\theta})\wedge t}     |(D^{k-1} P_{s,\sigma}\psi(\sigma, \cdot)(x+2h) - 2D^{k-1}P_{s,\sigma}\psi(\sigma, \cdot)(x+h) 
\\
\\
\hspace{22mm} + D^{k-1}P_{s,\sigma}\psi(\sigma, \cdot) (x ))(h_1, \ldots, h_{k-1})|\,d\sigma , \end{array}$$
which  is bounded by  
$$  \int_s^{(s+ \|h\|^{1/\theta})}  \frac{ 4 K_{k-1}}{(\sigma -s)^{(k-1)\theta}}d\sigma \|\psi\|_{\infty} \prod_{j=1}^{k-1}\|h_j\|  \leq 
4 kK_{k-1}  \|\psi\|_{\infty} \|h\|  \prod_{j=1}^{k-1}\|h_j\|,$$ 
if $k = 1/\theta$, and by 
$$\begin{array}{l}
\ds \int_s^{(s+ \|h\|^{1/\theta})}  \frac{4K_{k-1, \alpha} }{(\sigma - s)^{(k-1 -\alpha)\theta}}\| \psi(\sigma, \cdot)\|_{C^{\alpha}_b(X)}d\sigma  \prod_{j=1}^{k-1}\|h_j\|
\\
\\
\ds \leq 4(k-\alpha) K_{k-1, \alpha}
 \|h\| \prod_{j=1}^{k-1}\|h_j\| \sup_{0\leq r\leq t}\| \psi(\sigma, \cdot)\|_{C^{\alpha}_b(X)} , \end{array}$$
if  $\psi \in C^{0, \alpha}_b([0,t]\times X)$ with $\alpha \in (0, 1)$ and 
$k= \alpha + 1/\theta$. 
If $\|h\|^{1/\theta}\geq (t-s)$, $b_h(s, \cdot) =0$. If $\|h\|^{1/\theta} < (t-s)$,  we have 
\begin{equation}
\label{bZygmund}
\begin{array}{l}
| b_h(s,  x+2h) - 2 b_h(s, x+h) + b_h(s, x)|  \leq
\\
\\
\ds   \int_{s+ \|h\|^{1/\theta} }^{t}  |(D^{k-1}P_{s, \sigma} \psi(\sigma , \cdot)(x+2h) - 2D^{k-1}P_{s, \sigma} \psi(\sigma , \cdot)(x+h)+ D^{k-1}P_{s, \sigma} \psi(\sigma , \cdot)(x))(h_1, \ldots, h_{k-1})|    \,d\sigma . 
\end{array}
\end{equation}
To estimate the right-hand side we recall that for every $f\in C^2_b(X)$  we have
\begin{equation}
\label{c^2}
|f(x+2h) - 2f(x+h) + f(x)| \leq \sup_{y\in X} \|D^2f(y)\|_{\mathcal L^2(X)} \|h\|^2, \quad x, \;h\in X. 
\end{equation}
Applying \eqref{c^2} to the function $f(y):= D^{k-1}P_{s, \sigma} \psi(\sigma , \cdot)(y)(h_1, \ldots, h_{k-1})$, we see that 
the right-hand side of \eqref{bZygmund} is bounded by 
$$   \int_{s+ \|h\|^{1/\theta} }^{t} \frac{  K_{k+1}}{(\sigma -s)^{(k+1)\theta}} \|\psi \|_{\infty} d\sigma \, \|h\|^2    \prod_{j=1}^{k-1}\|h_j\|\leq    k K_{k+1}  \|h\| \|\psi \|_{\infty}  \prod_{j=1}^{k-1}\|h_j\|, $$
if $k = 1/\theta$, and by 
$$\begin{array}{l} \ds  \int_{s+ \|h\|^{1/\theta} }^{t}
\frac{K_{k+1, \alpha}}{(\sigma -s)^{(k+1-\alpha)\theta}} \|\psi (\sigma, \cdot)\Vert_{C^{\alpha}_b(X)}d \sigma \, \|h\|^2 \prod_{j=1}^{k-1}\|h_j\|
\\
\\
\ds  \leq  (k-\alpha) K_{k+1, \alpha}
 \|h\| \prod_{j=1}^{k-1}\|h_j\|_H  \sup_{0\leq r\leq t}\|\psi (r, \cdot)\Vert_{C^{\alpha}_b(X)}, \end{array}$$
if  $\psi \in C^{0,\alpha}_b([0,t]\times X)$, with $\alpha \in (0, 1)$ and 
$k= \alpha + 1/\theta$. 
Summing up, we estimate 
\[|[D^{k-1}u_1(s, \cdot)(x+2h) -2D^{k-1}u_1(s, \cdot)(x+h) + D^{k-1}u_1(s, \cdot)(x)] (h_1, \ldots h_{k-1})|\] 
 by 
 $$ k(4  K_{k-1} + K_{k+1})\prod_{j=1}^{k-1}\|h_j\| \|\psi \|_{\infty} \|h\|, $$
 if $1/\theta = k$, and by  
 $$ (k-\alpha)(4  K_{k-1, \alpha} + K_{k+1, \alpha})\prod_{j=1}^{k-1}\|h_j\| \sup_{0\leq r\leq t}\|\psi (r, \cdot)\|_{C^{\alpha}_b(X)} \|h\|, $$
 if  $\psi \in C^{0,\alpha}_b([0,t]\times X)$  with $\alpha \in (0, 1)$ and $ \alpha + 1/\theta = k$. This implies that 
 $u_1(t, \cdot) \in Z^{k}(X)$, with Zygmund seminorm bounded by $ k(4  K_{k-1} + K_{k+1})  \|\psi \|_{\infty}  $ in the first case,  and by  $(k-\alpha)(4  K_{k-1, \alpha} + K_{k+1, \alpha})\|\psi _{C^{\alpha}_b(X)} \|h\|$, in the second case. Such estimates and \eqref{Zygmund-Zygmund}   yield \eqref{maggZygmund_par0} and \eqref{maggZygmund_par}, respectively. 
 \end{proof}

 %%%%%%%%%%%%%%%%%%%%%%%%%%%%%%%%%%%%%%%%%%%%%%%%%
\section{Examples}
\label{Sect:Examples}
%%%%%%%%%%%%%%%%%%%%%%%%%%%%%%%%%%%%%%%%%%%%%%%%

\subsection{Example 1. }
As a first example we consider self-adjoint operators $A(t)$, $B(t)$, in diagonal form with respect to the same Hilbert basis $\{e_k:\; k\in \N\}$, namely
$$A(t)e_k = \alpha_k(t)e_k, \quad B(t)e_k = \beta_k(t)e_k, \quad 0\leq t\leq T, \ \ \ \  k\in \N,  $$
with continuous coefficients $\alpha_k$, $\beta_k$. We set 
$$\mu_k :=\min_{t} \alpha_k(t), \ \ \ \  \lambda_k:= \max_t \alpha_k(t),$$
and we assume that the sequence $(\lambda_k)$ is bounded from above, 
\begin{equation}
\label{suplambdak}
\sup_{k\in \N} \lambda_k := \lambda_0 <+\infty. 
\end{equation}
Then the family $\{A(t):\; 0\leq t\leq T\}$  generates a strongly continuous evolution operator $U(t,s)$,  given by 
$$U(t,s)e_k = \exp \left( \int_s^t \alpha_k(\tau)d\tau\right) e_k, \quad 0\leq s\leq t\leq T, \ \ \ \  k\in \N,   $$
so that Hypothesis \ref{H1}(i) is satisfied. 

Moreover, we assume that the coefficients $\beta_k$ are uniformly bonded, namely there is $M>0$ such that 
\begin{equation}
\label{betak}
|\beta_k(t)|\leq M, \quad 0\leq t\leq T, \ \ \ \  k\in \N. 
\end{equation}
Then the operators $B(t)$ belong to $\mathcal L(X)$, and $\sup_{0\leq t\leq T}\|B(t)\|_{\mathcal L(X)} \leq M$. So, Hypothesis \ref{H1}(ii)  is satisfied too. 

The operators $Q(t,s)$ are given by  
$$Q(t,s) e_k = \int_s^t \exp \left(2 \int_\sigma^t \alpha_k(\tau)d\tau\right) (\beta_k(\sigma))^2d\sigma e_k:= t_k(t,s) e_k, \quad 0\leq s\leq t\leq T, \; k\in \N. $$
Therefore, Hypothesis \ref{H1}(iv) is satisfied provided
\begin{equation}
\label{traccia}
\sum_{k=1}^\infty t_k(t,s) <+\infty, \quad 0\leq s<t\leq T. 
\end{equation}
An obvious sufficient condition for \eqref{traccia} to hold is, for instance, that $\lambda_k $ is eventually negative, and 
\begin{equation}
\label{suff_per_traccia}
 \sum_{k =k_0}^{+ \infty} \frac{\|\beta_k\|_{\infty}^2}{|\lambda_k|}  < +\infty ,
 \end{equation}
for some $k_0\in \N$. 

Since $Q(t,s)^{1/2}e_k = (t_k(t,s))^{1/2} e_k$ for every $t$, $s$, $k$, Hypothesis \ref{H2} is satisfied if $t_k(t,s)$ is eventually positive for every $t$, $s$ (say, for $k\geq k_0$), and 

\begin{equation}
\label{strongfeller}
\sup_{k\geq k_0} \exp\left(2\int_s^t \alpha_k(\tau)d\tau\right) (t_k(t,s))^{-1} < +\infty, \quad 0\leq s<t\leq T. 
\end{equation}
In this case the above supremum is equal to the square of the norm of the operator $\Lambda (t,s)$ defined in \eqref{Lambda(t,s)}. 
 
A sufficient condition for \eqref{strongfeller} to hold is that $\lambda_k$ is eventually negative, $b_k:= \min_{0\leq t\leq T} |\beta_k(t)|$ is eventually $ \neq 0$ and
$$\sup_{k\in \N: \,b_k\neq 0} \frac{|\mu_k|}{b_k^2 (e^{-2\lambda_k(t-s)} - e^{-2(\lambda_k-\mu_k)(t-s)})}     <+\infty . $$
Hypothesis \ref{Hyp:Lambda} holds provided the constants $\mu_k$, $\lambda_k$, $\beta_k$ have suitable powerlike behavior. For instance, if 
 
$ - c_1k^\alpha \leq \mu_k\leq \lambda_k \leq   - c_2 k^\alpha$ with $c_1\geq c_2 >0$, $\alpha >0$,   $b_k \geq c_3 k^{-\beta}$, with $\beta \geq 0$, $c_3>0$, 
we obtain $e^{-2\lambda_k(t-s)} - e^{-2(\lambda_k-\mu_k)(t-s)} \geq e^{2c_2k^\alpha (t-s)} - 1$ 
and therefore
$$\sup_{k\in \N} \frac{|\mu_k|}{b_k^2 (e^{-2\lambda_k(t-s)} - e^{-2(\lambda_k-\mu_k)(t-s)})}   
 \leq \sup_{k\in \N} \frac{c_2k^{\alpha + 2\beta} }{c_3^2 (e^{2c_2 k^\alpha (t-s)} - 1)} \leq C(t-s)^{-(\alpha + 2\beta)/\alpha}$$
with 
$$C =  \frac{c_2}{c_3^2 (2c_2)^{(\alpha + 2\beta)/\alpha} }\sup_{y>0} \frac{y^{(\alpha + 2\beta)/\alpha} }{e^y -1}, $$
so that \ref{Hyp:Lambda} holds with $\theta = 1/2 + \beta/\alpha$. 

% This exponent is optimal, since 
%
% $$\sup_{k\geq 1} \exp\left(2\int_s^t \alpha_k(\tau)d\tau\right) (t_k(t,s))^{-1} \geq  \exp\left(2\int_s^t \alpha1(\tau)d\tau\right) (t_1(t,s))^{-1}

Let us prove that the exponent $ 1/2 + \beta/\alpha$ is optimal, under the additional assumption
 $\|b_k\|_{\infty} \leq c_4k^{-\beta}$ for some $c_4>0$. In this case we also have
$$\begin{array}{lll}
\|\Lambda (t,s)\|_{\mathcal L(X)}^2 &=&\ds 
\sup_{k\in \N} \exp\left(2\int_s^t \alpha_k(\tau)d\tau\right) (t_k(t,s))^{-1} \geq 
\sup_{k\in \N}  \frac{2c_1k^{\alpha + 2\beta} e^{-2c_2k^\alpha (t-s)}}{c_4^2 (1-e^{-2c_1 k^\alpha (t-s)} )} 
\\
\\
&=& \ds \frac{2c_1}{c_4^2 (t-s)^{1+ 2\beta/\alpha} }  \sup_{k\in \N} \varphi (k^\alpha (t-s)), 
\end{array}$$
where
$$\varphi(y) = \frac{y^{2\beta/\alpha +1}e^{-2c_2y}}{1- e^{-2c_1y}}, \quad y>0. $$
Fix any   $[a,b]\subset (0, +\infty)$ and set  $M_0:= \min\{ \varphi(y):\; y\in [a,b]\}$, so that $M_0>0$. Let $t-s$ be small enough, in such a way that 
\[[a/(t-s), b/(t-s)] \cap \{k^\alpha: \; k\in \N\}\neq \emptyset\]  (it is sufficient that $t-s < (b^{1/\alpha} -a^{1/\alpha})^\alpha$), and choose any $k_0\in \N$ such that $k_0^\alpha$ belongs to such intersection. So, we have $k_0^\alpha (t-s)\in [a,b]$, and therefore
$$\sup_{k\in \N}    \varphi (k^\alpha (t-s)) \geq   \varphi (k_0^\alpha (t-s)) \geq M_0,  $$
which yields
$$\|\Lambda (t,s)\|_{\mathcal L(X)} \geq \frac{(2c_1 M_0)^{1/2}}{c_4} (t-s)^{-1/2 -\beta/\alpha}$$
for $t-s$ small.

\subsection{Example 2. }
As a second example, we consider the evolution operator $U(t,s)$ in $X=L^2( \mathcal O)$ associated to an evolution equation of parabolic type,  
\begin{equation}
\label{eqpar}
\left\{
\begin{array}{l}
u_t(t,x)  = \mathcal A(t) u(t, \cdot)(x), \quad ( t,x)\in (s,T)\times \mathcal O, 
\\
\\
u(t,x) = 0, \quad (t,x)\in  (s,T)\times  \partial \mathcal O, 
\\
\\
u(s, x) = u_0(x), \quad x\in  \mathcal O. 
\end{array}\right. 
\end{equation}
Here $0\leq s<T$ and $\mathcal O$ is a bounded open set in $\R^d$ with smooth enough ($C^2$) boundary. The differential operators $\mathcal A (t)$ are defined by 
\begin{equation}
\label{A(t)}
\mathcal A (t)\varphi(x) = \sum_{i,j=1}^d a_{ij}(t,\cdot )D_{ij}\varphi (x) + \sum_{i=1}^d a_i(t, x)D_i\varphi(x) + a_0(t,x) \varphi (x), \quad t\in [0,T], \; x\in \mathcal{O}, 
\end{equation}
and we make the following assumptions on the coefficients.

\begin{Hypothesis}
\label{H4}
$a_{ij}\in C^{0,1+\rho}([0,T]\times \overline{\mathcal O})$, $a_i\in C^{0, \rho}([0,T]\times \overline{\mathcal O})$, for some $\rho>0$. There exists $\nu >0$ such that for all $\xi\in \R^d$
\begin{equation}
\label{ellitticitˆ}
\sum_{i,j =1}^d a_{ij}(t,x)\xi_i\xi_j \geq \nu|\xi|^2, \quad t\in [0,T], \; x\in \mathcal O. 
\end{equation}
\end{Hypothesis}

The assumptions on the operators $B(t)$, $0\leq t\leq T$, are the following. 

\begin{Hypothesis}
\label{H5}
There  exists $q\geq 2$, $q > d$,  such that for a.e. $t\in [0,T]$, $B(t)\in \mathcal L(L^{2}(\mathcal O), L^q(\mathcal O))$ has bounded inverse, 
and 
\[\displaystyle{\esssup _{0<t<T} (\|B(t)\|_{ \mathcal L(L^{2}(\mathcal O), L^q(\mathcal O))} + \|B(t)^{-1}\|_{ \mathcal L(L^{q}(\mathcal O), L^2(\mathcal O))} )
<+\infty.}\]
 Moreover for every $\varphi \in L^2(\mathcal O)$ the mapping $[0,T] \mapsto L^q(\mathcal O) $,  $t\mapsto B(t)\varphi$ is measurable. 
\end{Hypothesis}

% In particular, we can take $B(t)\equiv I$  only for $d=1$. 

Notice that, since $q\geq 2$, $B(t)\in \mathcal L(X)$ for a.e. $t\in (0,T)$, and Hypothesis \ref{H1}(ii) on $B$ is satisfied. We use the notation $B^\star(t)$ to denote both dual operators of $B(t): L^2(\mathcal O)\mapsto L^2(\mathcal O)$ and  $B(t): L^2(\mathcal O)\mapsto L^q(\mathcal O)$, since the latter dual operator is just a restriction of the former.

Using the classical theory of quadratic forms in Hilbert spaces (e.g. \cite{DL}), it is possible to prove that for every $u_0\in X$,  \eqref{eqpar} has a unique weak solution, 
namely there exists a unique $u\in W:= L^2((s,T); H^{1}_0(\mathcal O)) \cap W^{1,2}((s,T); H^{-1}(\mathcal O)) $ such that 
$$-\int_s^T \langle u(t),v'(t)\rangle \,dt + \int_s^T {\bf a}(t,u(t),v(t))\,dt = \langle u_0, v(0)\rangle$$
  for every $v\in W$ satisfying $v(T)=0$. Here  $\langle \cdot, \cdot\rangle$ is the scalar product in $L^2(\mathcal O)$ and ${\bf a}(t, \cdot , \cdot )$ is the quadratic form associated to the operator $\mathcal A(t)$ in $H^1_0(\mathcal O)$, namely
$$\begin{array}{lll}
{\bf a}(t, \varphi, \psi)  & = & \ds \int_{\mathcal O} \sum_{i,j=1}^d a_{ij}(t,x)D_j\varphi(x)D_i\psi(x)\,dx 
\\
\\
&& \ds - \int_{\mathcal O} \left(\sum_{j=1}^d (a_j(t,x) + \sum_{i=1}^d D_ia_{ij}(t,x))D_j\varphi (x) + a_0(t,x) \varphi(x)\right) \psi(x) \,dx 
\end{array}$$
for every $t\in [0,T]$, and $\varphi$, $\psi\in H^1_0(\mathcal O)$. See e.g. \cite[Thm. 2.4]{D}. 

Setting $U(t,s)u_0:= u(t)$, where $u$ is the unique weak solution to \eqref{eqpar}, $U(t,s)$ turns out to be an evolution operator in $X$. 

In the same paper \cite{D}  it is shown that $U(t,s)$ may be extended to the whole $L^1(\mathcal O)$, and the extension (still called $U(t,s)$) belongs to $ \mathcal L(L^1(\mathcal O), L^\infty (\mathcal O))$, therefore it is represented by
\begin{equation}
\label{reprU(t,s)}
U(t,s)\varphi (x) = \int_{\mathcal O} k(x,y,t,s)\varphi(y)\,dy,\quad  \varphi \in L^1(\mathcal O), 
\end{equation}
where the kernel $k(\cdot, \cdot, t,s)$ belongs to $L^\infty( \mathcal O \times \mathcal O)$ for every $0\leq s<t\leq T$. Moreover, $k$ satisfies gaussian bounds, namely there are $M$, $m>0$ such that 
\begin{equation}
\label{gaussian}
|k(x,y,t,s)| \leq \frac{M }{(t-s)^{d/2}} \exp \left(- \frac{|x-y|^2}{m(t-s)} \right), \quad x, \;y\in \mathcal O, \; 0\leq s<t\leq T. 
\end{equation}
See also \cite{A}. Together with Hypothesis \ref{H5}, such an estimate allows to prove that Hypothesis \ref{H1}(iv) holds, as shown in the next lemma. 

\begin{Lemma}
For every $0\leq s<t\leq T$, the operator $Q(t,s)$ defined in \eqref{g,Q}  has finite trace. 
\end{Lemma}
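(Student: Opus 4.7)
The plan is to prove the lemma by computing the trace against an orthonormal basis, exchanging sum and integral via Tonelli, and then bounding the resulting Hilbert--Schmidt norm of $U(t,r)B(r)$ by exploiting the Gaussian kernel estimate \eqref{gaussian} on $U$ together with the improved mapping property $B(r)\in\mathcal{L}(L^{2}(\mathcal O),L^{q}(\mathcal O))$ with $q>d$ from Hypothesis \ref{H5}.

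First, fix $(s,t)\in\Delta$ and an orthonormal basis $\{e_k\}$ of $L^{2}(\mathcal O)$. Since $Q(t,s)$ is positive and self-adjoint, the trace is the (possibly infinite) sum
\[
\mathrm{Tr}\,Q(t,s)=\sum_{k\ge 1}\int_s^t\|B^{\star}(r)U^{\star}(t,r)e_k\|_{L^{2}}^{2}\,dr,
\]
and by Tonelli this equals $\int_s^t\|U(t,r)B(r)\|_{HS(L^2)}^{2}\,dr$. So the problem reduces to showing that for a.e. $r\in(s,t)$ the operator $U(t,r)B(r)\colon L^{2}(\mathcal O)\to L^{2}(\mathcal O)$ is Hilbert--Schmidt with an HS-norm that blows up integrably as $r\uparrow t$.

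Next I would identify the HS kernel of $U(t,r)B(r)$. Using \eqref{reprU(t,s)} and the bilinear duality $L^{q'}(\mathcal O)\times L^{q}(\mathcal O)$ (note $k(x,\cdot,t,r)\in L^{\infty}(\mathcal O)\subset L^{q'}(\mathcal O)$ since $\mathcal O$ is bounded), for $\varphi\in L^{2}(\mathcal O)$
\[
(U(t,r)B(r)\varphi)(x)=\langle k(x,\cdot,t,r),B(r)\varphi\rangle_{L^{q'},L^{q}}=\langle B^{\star}(r)[k(x,\cdot,t,r)],\varphi\rangle_{L^{2}},
\]
where $B^{\star}(r)$ is understood as the dual operator in $\mathcal{L}(L^{q'}(\mathcal O),L^{2}(\mathcal O))$. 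Hence the integral kernel of $U(t,r)B(r)$ on $L^{2}$ is $K_r(x,z):=(B^{\star}(r)[k(x,\cdot,t,r)])(z)$, and consequently
\[
\|U(t,r)B(r)\|_{HS}^{2}=\int_{\mathcal O}\|B^{\star}(r)[k(x,\cdot,t,r)]\|_{L^{2}}^{2}\,dx\le \|B(r)\|_{\mathcal{L}(L^{2},L^{q})}^{2}\int_{\mathcal O}\|k(x,\cdot,t,r)\|_{L^{q'}}^{2}\,dx.
\]

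Finally, the Gaussian bound \eqref{gaussian} together with the change of variables $y\mapsto x-z$ yields
\[
\|k(x,\cdot,t,r)\|_{L^{q'}}^{q'}\le \frac{M^{q'}}{(t-r)^{q'd/2}}\int_{\R^{d}}\exp\!\Big(-\tfrac{q'|z|^{2}}{m(t-r)}\Big)dz\le C(t-r)^{-d(q'-1)/2},
\]
so that, using $(q'-1)/q'=1/q$, one gets $\|k(x,\cdot,t,r)\|_{L^{q'}}^{2}\le C'(t-r)^{-d/q}$ uniformly in $x\in\mathcal O$. Combining the last two displays with the uniform bound on $\|B(r)\|_{\mathcal{L}(L^{2},L^{q})}$ from Hypothesis \ref{H5} gives $\|U(t,r)B(r)\|_{HS}^{2}\le C''(t-r)^{-d/q}$, and since $q>d$ the exponent $-d/q$ is strictly greater than $-1$, so
\[
\mathrm{Tr}\,Q(t,s)\le C''\int_s^{t}(t-r)^{-d/q}dr=\frac{C''}{1-d/q}(t-s)^{\,1-d/q}<+\infty.
\]
The main obstacle is the second step: justifying rigorously that the kernel of $U(t,r)B(r)$ on $L^{2}$ really is $x\mapsto B^{\star}(r)[k(x,\cdot,t,r)]$, with measurability in $x$ (so that Fubini/Tonelli applies) and consistency of the two meanings of $B^{\star}(r)$ (as dual of $B(r)\colon L^{2}\to L^{q}$ versus as $L^{2}$-adjoint of $B(r)\in\mathcal L(L^{2})$). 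Once this identification is in place, everything else is an elementary Gaussian kernel computation together with the sharp sub-integrable singularity $(t-r)^{-d/q}$ dictated by the smoothing hypothesis $q>d$.
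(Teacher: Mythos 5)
Your argument is correct and follows essentially the same route as the paper: both reduce $\mathrm{Tr}\,Q(t,s)$ to $\int_s^t\int_{\mathcal O}\|B^\star(\sigma)k(x,\cdot,t,\sigma)\|_{L^2}^2\,dx\,d\sigma$ (the paper via an explicit Parseval computation over the basis, you via the equivalent Hilbert--Schmidt kernel identity), then use $B^\star(\sigma)\in\mathcal L(L^{q'},L^2)$ and the Gaussian bound to obtain the same integrable singularity $(t-\sigma)^{-d/q}$ with $q>d$. The kernel identification you flag as the main obstacle is handled in the paper at the same level of informality, so no substantive gap separates the two proofs.
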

\begin{proof}
Let $\{ e_k:\; k\in \N\}$ be any Hilbert basis of $L^2(\mathcal O)$. The trace of $Q(t,s)$ is given by
\[\int_s^t  \sum_{k=1}^{\infty}
\Vert B^\star(\sigma) U^\star (t,\sigma) e_k\Vert^{2}_{L^2(\mathcal{O})} d\sigma, \]
where, by  the representation formula \eqref{reprU(t,s)}, 
\[(U^\star (t,\sigma)e_k )(z) = \int_{\mathcal O} k(x,z,t,\sigma) e_k(x) \,dx,\ \ \ \ \mbox{ a.e.}\  z\in \mathcal O,\]
 and therefore 
 \[(B^\star(\sigma) U^\star (t,\sigma) e_k) (y) =  \int_{\mathcal O} (B^\star(\sigma) k(x,\cdot ,t,\sigma))(y) e_k(x) \,dx,\ \ \ \ \mbox{ a.e.}\  y\in \mathcal O.\]
  It follows
\begin{equation}
\label{stimatraccia}
\begin{array}{lll}
\text{Tr} \; Q(t,s) & = & \ds \int_s^t \int_{\mathcal O} \sum_{k=1}^{\infty} \left( \int_{\mathcal O} B^\star(\sigma) k(x,\cdot, t,\sigma)(y) e_k(x)\,dx\right)^2 \,dy\, d\sigma 
\\
\\
& = & \ds \int_s^t   \int_{\mathcal O} \left( \int_{\mathcal O} (B^\star (\sigma) k(x, \cdot, t,\sigma)(y))^2dx\right) dy \,d\sigma
\\
\\
& = & \ds \int_s^t   \int_{\mathcal O}\left(  \int_{\mathcal O} (B^\star (\sigma) k(x, \cdot, t,\sigma)(y))^2dy\right) dx\,d\sigma
\end{array}\end{equation}
Now, since $B(\sigma)\in \mathcal L (L^2(\mathcal O), L^q(\mathcal O))$ then $B^\star(\sigma ) \in   \mathcal L (L^{q'}(\mathcal O), L^2(\mathcal O))$, and  for every $x\in \mathcal O$ we have
$$\|B^\star (\sigma) k(x, \cdot, t,\sigma)\|_{L^2(\mathcal O)} \leq \|B^\star (\sigma) \|_{ \mathcal L (L^{q'}(\mathcal O), L^2(\mathcal O))} 
\|k(x, \cdot, t, \sigma )\|_{ L^{q'}(\mathcal O)} \leq C\|k(x, \cdot, t, \sigma)\|_{ L^{q'}(\mathcal O)} . $$

 By estimates \eqref{gaussian}, for every $x\in \mathcal O$  and $p>1$   we have
\begin{equation}
\label{stimak}
\|k(x, \cdot, t, \sigma)\|_{ L^{p}(\mathcal O)} ^{p} \leq M(t-\sigma)^{-dp/2} \int_{\mathcal \R^d} e^{-p|x-y|^2/m(t-\sigma)} dy =: M_p(t-\sigma)^{d(1-p)/2}, 
\end{equation}
so that, taking $p=q'$, 
%
%By estimates \eqref{gaussian}, for every $x\in \mathcal O$ we have
%
%$$\|k(x, \cdot, t, \sigma)\|_{ L^{q'}(\mathcal O)} ^{q'} \leq M(t-\sigma)^{-dq'/2} \int_{\mathcal \R^d} e^{-q'|x-y|^2/m(t-\sigma)} dy =: M_1(t-\sigma)^{d(1-q')/2}, $$
%so that 
%
 
$$\|B^\star (\sigma) k(x, \cdot, t,\sigma)\|_{L^2(\mathcal O)}^2  \leq C^2 M_{q'}^{2/q'} (t-\sigma)^{-d/q}$$
(independent of $x$). Replacing in \eqref{stimatraccia}, since we are assuming  $q>d$, it follows that the trace of $Q(t,s)$ is finite. 
\end{proof}

Hypothesis \ref{H4} allows to use the results of the paper \cite{PS}, see \cite[Sect. 4]{PS}. There, a strongly continuous  evolution operator $U_q(t,s)$ is constructed in the space
$X_q:= L^q(\mathcal O)$, for $q\in (1, +\infty)$, in such a way that, setting $D_q:= W^{2,q}(\mathcal O) \cap W^{1,q}_0(\mathcal O)$, for every $\varphi\in L^p((s,t); X_q)$ and $u_0\in (X_q, D_q)_{1-1/p, p}$  with $p\in (1, +\infty)$,  the function $u(t) :=U_q(t,s)u_0$ is  the unique strong solution to  \eqref{eqpar}, namely it is the unique function  \[u \in\,L^p((s,T); D_q) \cap W^{1,p} ((s,T); X_q) \cap C([s,t];  (X_q, D_q)_{1-1/p, p})\] that  satisfies 
\begin{equation}
\label{strong}
\left\{ \begin{array}{l}
u'(\tau) = A_q(\tau) u(\tau) + \varphi(\tau), \quad a.e. \; \tau \in (s,t),
\\
\\
u(s) = u_0
\end{array}\right.
\end{equation}
where \[A_q(\tau) : D_q\mapsto X_q,\ \ \  A_q(\tau)\varphi := A(\tau)\varphi\]  is the realization of $\mathcal A(\tau) $ in $X_q$. 
Taking $p=q=2$, we obtain that $U_2(t,s)$ coincides with our $U(t,s)$, since for every $u_0\in W^{1,2}_0(\mathcal O) = (X_2,D_2)_{1/2, 2}$ the function $u(t) =U_2(t,s)u_0$ is a strong solution to \eqref{eqpar}, so that it is a weak solution. By uniqueness of the weak solution, the bounded operators $U(t,s)$ and $U_2(t,s)$ coincide on a dense subset of $X$, and therefore they coincide on the whole $X$. Still by uniqueness, for $q>2$ the operators $U_q(t,s)$ are the parts of $U(t,s)$ in $X_q$. 
 Therefore, Hypotheses \ref{H1}(i)-(iv) about $U(t,s)$ are satisfied. 
 
 Let us  check that Hypotheses \ref{H2} and  \ref{Hyp:Lambda} are satisfied too.
By  Theorem 2.5 of  \cite{PS},  for every $0\leq s<t\leq T$ and $\varphi \in L^2((s,t);X_q)$, the problem 
\begin{equation}
\label{immagine}
\left\{ \begin{array}{l}
v'(\tau) = A_q(\tau) v(\tau) + \varphi(\tau), \quad s<\tau<t,
\\
\\
v(s) =0
\end{array}\right.
\end{equation}
has a unique solution $v\in L^2((s,t); D_q)\cap W^{1,2}((s,t);X_q)$, given by the variation of constants formula
$$v(\tau) = \int_s^\tau U(\tau, \sigma) \varphi(\sigma)\,d\sigma, \quad s\leq \tau\leq t, $$
 and there exists $C>0$ independent of $s$, $t$, $\varphi$ such that
$$\|v\|_{L^2((s,t);D_q)} + \|v\|_{W^{1,2}((s,t); X_q)} \leq C \|\varphi\|_{L^2((s,t);X_q)}. $$
 
 Therefore, the mapping 
$$\{ v\in L^2((s,t);D_q)\cap W^{1,2}((s,t);X_q):\; v(s)=0\}  \mapsto L^2((s,t);X_q), \quad v\mapsto \Phi(v):= v'-A_q(\cdot)v $$
is an isomorphism. 
We recall now that for every couple of Banach spaces $X$, $D$, such that $D\subset X$ with continuous embedding,  the space $L^2((s,t);D)\cap W^{1,2}((s,t);X)$ is continuously embedded in 
 $C([s,t]; (X,D)_{1/2, 2})$, and the range of the trace operator $v\mapsto Tv:= v(t)$ is precisely $(X,D)_{1/2, 2}$.  It follows that the range of the mapping 
$$L^2((s,t);X_q)\mapsto X_q, \quad \varphi\mapsto \int_s^t U(t,\sigma) \varphi(\sigma)d\sigma  =T \Phi^{-1}\varphi $$
is equal to $(X_q, D_q)_{1/2, 2}$. By Hypothesis \ref{H5}, the operator \[L^2((s,t);X_2) \mapsto L^2((s,t);X_q),\ \ \ \ \varphi\mapsto B(\cdot)\varphi,\] is bounded and onto. 
Therefore, the range of the operator $L$ defined in \eqref{eq:L} is still $(X_q,D_q)_{1/2, 2}$.

In the paper \cite{PS} it is proved that $U(t,s)$ maps $X_q$ into $(X_q,D_q)_{1/2,2}$ for $t>s$, and there exists $c>0$ such that 
\begin{equation}
\label{stima1/2,2}
\|U(t,s)x\|_{(X_q,D_q)_{1/2,2}} \leq \frac{c}{(t-s)^{1/2} } \|x\|_{X_q}, \quad 0\leq s<t\leq T, \; x\in X_q. 
\end{equation}

Now, for $s<t$ we split $U(t,s) = U(t, (t+s)/2)U((t+s)/2, s)$.   By the representation formula \eqref{reprU(t,s)} and estimate \eqref{stimak} we get
$$\|U((t+s)/2, s)  \|_{\mathcal L(X, L^\infty(\mathcal O))}\leq M_2^{1/2}|\mathcal O|  ((t-s)/2)^{-d/4}$$
while $\|U((t+s)/2, s)  \|_{\mathcal L(X)} \leq M$ for some $M$ independent of $t$, $s$. By interpolation we get 
$$\|U((t+s)/2, s)  \|_{\mathcal L(X, X_q)}\leq C_q(t-s)^{-(q-2)d/4q},  $$
for some $C_q$ independent of $t$, $s$. Using this estimate and 
  \eqref{stima1/2,2} with $s$ replaced by $(t+s)/2$, we get $U(t,s)(X) \subset (X_q,D_q)_{1/2,2}$, so that Hypothesis \ref{H2} is satisfied, and
$$\|U(t,s)\varphi \|_{(X_q,D_q)_{1/2,2}} \leq \frac{C}{(t-s)^{1/2 + (q-2)d/4q} } \|\varphi \|_{X}, \quad 0\leq s<t\leq T, \; \varphi \in X,$$
for some $C>0$ independent of $s$ and $t$,  which shows that  
Hypothesis \ref{Hyp:Lambda}  is satisfied with $\theta = 1/2 + (q-2)d/4q$.

%%%%%%%%%%%%%%%%%%%%%%%%%%%
\section{Acknowledgements}

The first  author was partially supported the NSF Research Grant DMS-1712934 (2017-2021), ``Analysis of Stochastic Partial Differential Equations with Multiple Scales" and NSF Research Grant DMS-1954299 (2020-2023), ``Multiscale Analysis of Infinite-Dimensional Stochastic Systems''. The second author is a member of GNAMPA-INDAM, and was partially supported by the Parma University Grant 2020 ``Topics in Deterministic and Stochastic Differential Equations". 
%%%%%%%%%%%%%%%%%%%%%%%%%%%

\end{document}